\newtheorem{thm}{Theorem}[section]
\newtheorem{Pro}[thm]{Proposition}
\newtheorem{cor}[thm]{Corollary}
\newtheorem{lem}[thm]{Lemma}
\newtheorem{pro}[thm]{Proposition}
\theoremstyle{definition}
\newtheorem{defin}[thm]{Definition}
\newtheorem{rem}[thm]{Remark}
\numberwithin{equation}{section}
\DeclareMathOperator{\rank}{Rank}
\DeclareMathOperator{\sing}{Sing}
\DeclareMathOperator{\Swan}{Swan}
\DeclareMathOperator{\Drop}{Drop}
\DeclareMathOperator{\Fr}{Fr}
\DeclareMathOperator{\Tr}{Tr}
\DeclareMathOperator{\FT}{FT}
\DeclareMathOperator{\Sp}{Sp}
\DeclareMathOperator{\USp}{USp}
\DeclareMathOperator{\SL}{SL}
\DeclareMathOperator{\Kl}{Kl}
\DeclareMathOperator{\PGL}{PGL}
\newcommand{\X}{\mathbb{X}}
\newcommand{\Y}{\mathbb{Y}}
\newcommand{\ex}{\mathbb{E}}
\newcommand{\E}{\mathcal{E}}
\newcommand{\re}{\textup{Re}}
\newcommand{\im}{\textup{Im}}
\newcommand{\pr}{\mathbb{P}}
\newcommand{\f}{f_{\mathbb{X}}}
\newcommand{\B}{\textup{Bi}_p}
\newcommand{\ta}{\widehat{\ph_a}}
\newcommand{\G}{\mathcal{G}}
\newcommand{\F}{\mathbb{F}_p}
\newcommand{\Fs}{\mathbb{F}_p^{\times}}
\newcommand{\Fn}{\mathbb{F}_{p^{n}}}
\newcommand{\M}{\mathcal{M}}
\newcommand{\ph}{\varphi}
\newcommand{\newabstract}[1]{%
  \par\bigskip
  \csname otherlanguage*\endcsname{#1}%
  \csname captions#1\endcsname
  \item[\hskip\labelsep\scshape\abstractname.]
}
\begin{document}

\baselineskip=17pt

\title[The distribution of the maximum of partial sums of trace functions]{The distribution of the maximum of partial sums of Kloosterman sums and other trace functions}

\author{Pascal Autissier}
\address{I.M.B., Universit\'e de Bordeaux, 
351, cours de la Lib\'eration, 
33405 TALENCE, France}

\email{Pascal.Autissier@math.u-bordeaux.fr}

\author{Dante Bonolis}
\address{ETH Zurich,
Department of Mathematics,
HG J 14.4,
R\"amistrasse 101,
8092 Zurich,
Switzerland}

\email{dante.bonolis@math.ethz.ch}

\author{Youness Lamzouri}
\address{Institut \'Elie Cartan de Lorraine, Universit\'e de Lorraine, BP 70239, 54506 Vandoeuvre-l\`es-Nancy Cedex, France
\\ and Department of Mathematics and Statistics,
York University,
4700 Keele Street,
Toronto, ON,
M3J1P3
Canada}

\email{youness.lamzouri@univ-lorraine.fr}

\date{}

\begin{abstract} In this paper, we investigate the distribution of the maximum of partial sums of families of $m$-periodic complex valued functions satisfying certain conditions. We obtain precise uniform estimates for the distribution function of this maximum in a near optimal range. Our results apply to partial sums of Kloosterman sums and other families of $\ell$-adic trace functions, and are as strong as those obtained by Bober, Goldmakher, Granville and Koukoulopoulos for character sums. In particular, we improve on the recent work of the third author for Birch sums. However, unlike character sums, we are able to construct families of $m$-periodic complex valued functions which satisfy our conditions, but for which the P\'olya-Vinogradov inequality is sharp.  
\end{abstract}

\subjclass[2010]{Primary 11L03, 11T23; Secondary 14F20, 60F10}

\thanks{The third author is partially supported by a Discovery Grant from the Natural Sciences and Engineering Research Council of Canada.}

\maketitle

\section{Introduction}
Let $m\geq 2$ be an integer, and $\ph: \mathbb{Z}/m\mathbb{Z} \to \mathbb{C}$ a complex valued function which we extend to an $m$-periodic function $\ph: \mathbb{Z} \to \mathbb{C}$. An important problem in analytic number theory is to obtain non-trivial estimates for the quantity 
$$ \M(\ph):= \max_{x<m} \left|\sum_{0\leq n\leq x} \ph(n)\right|.$$ 
The special case where $\ph=\chi$ is a Dirichlet character modulo $m$ has been extensively studied over the last century, going back to the classical inequality proved by P\'olya and Vinogradov in 1918:
$$ \M(\chi)\ll \sqrt{m} \log m.$$  
A straightforward generalization of this bound for a general $m$-periodic complex valued function $\ph$ gives 
\begin{equation}\label{PV}
\M(\ph)\ll ||\widehat{\ph}||_{\infty} \sqrt{m}\log m,
\end{equation}
where $\widehat{\ph}: \mathbb{Z}\to \mathbb{C}$ is the normalized discrete Fourier transform of $\ph$, defined by
$$ \widehat{\ph} (h) = \frac{1}{\sqrt{m}} \sum_{n \ (\textup{mod } m)} \ph(n) e_m(hn),$$
where here and throughout we let $e(z):=\exp(2\pi i z)$, and $e_m(z):=e(z/m)$ is the standard additive character modulo $m$. To see this, consider the discrete Plancherel formula 
\begin{equation}\label{Planche}
\sum_{0\leq n\leq x} \ph(n)=\sum_{-m/2<h\leq m/2}\overline{\gamma_m(h;x)} \widehat{\ph}(h),
\end{equation}
where $$\gamma_m(h;x):= \frac{1}{\sqrt{m}} \sum_{0\leq n\leq x}e_m\left(nh\right) $$
are the Fourier coefficients modulo $m$ of the characteristic function of the interval $[0, x]$. The P\'olya-Vinogradov bound \eqref{PV} follows by using the elementary estimate (see for example page 1501 of \cite{KoSa}) 
\begin{equation}\label{Planche2}
\frac{1}{\sqrt{m}}\gamma_m(h;x)= \frac{e_m\left(xh\right)- 1}{2\pi i h} +O\left(\frac{1}{m}\right),
\end{equation}
which holds uniformly for $1\leq |h|\leq m/2$. 

We shall only consider those $\ph$ for which the Fourier transform $\widehat{\ph}$ is uniformly bounded (this includes primitive Dirichlet characters), which in view of the P\'olya-Vinogradov bound \eqref{PV} gives 
\begin{equation}\label{PV2}
 \M(\ph)\ll  \sqrt{m}\log m.
 \end{equation} In the case of character sums, Montgomery and Vaughan \cite{MV} proved that this bound is not optimal conditionally on the generalized Riemann hypothesis GRH. Indeed, they showed that assuming GRH we have 
$$ \M(\chi)\ll \sqrt{m}\log\log m,$$  for all non-principal Dirichlet characters $\chi\pmod m$.  This last bound is in fact optimal in view of an old result of Paley \cite{Pa} who showed that $\M(\chi_m)\gg \sqrt{m}\log\log m$ for infinitely many $m$, where $\chi_m$ is the quadratic character modulo $m$.  

Recently, Bober, Goldmakher, Granville and Koukoulopoulos \cite{BGGK} investigated the distribution of $\M(\chi)$ over non-principal characters $\chi$ modulo a large prime $q$.  If we denote by 
$\Phi_{\textup{char}}(V)$ the proportion of non-principal characters $\chi\bmod q$ for which $\M(\chi)/\sqrt{q}>V$, then the main result of \cite{BGGK} states that 
for $C\leq V\leq C_0\log \log q -C$ (where $C$ is an absolute constant), one has 
\begin{equation}\label{BGGK}
 \Phi_{\textup{char}}(V) = \exp\left(-\frac{e^{V/C_0+O(1)}}{V}\right),
 \end{equation}
where $C_0=e^{\gamma}/\pi$, and $\gamma$ is the Euler-Mascheroni constant.

Building on the work of Kowalski and Sawin \cite{KoSa},  Lamzouri \cite{La} investigated a similar question for the partial sums of certain exponential sums. For a prime $p\ge 3$ the Birch sum associated to $a \in \F$ is the following normalized complete cubic exponential sum 
$$
\B(a):=\frac{1}{\sqrt{p}}\sum_{n\in \F} e_p(n^3+an).
$$
These sums were first considered by Birch \cite{Bi} who conjectured that $\B(a)$ becomes equidistributed according to the Sato-Tate measure as $a$ varies in $\Fs$ and $p\to\infty$. This conjecture was subsequently proved by Livn\'e in \cite{Li}.
Let $\ph_a(n)= e_p(n^3+an)$ and define
$$\Phi_{\textup{Bi}}(V)= \frac{1}{p-1}\left|\left\{ a\in \Fs: \frac{\M(\ph_a)}{\sqrt{p}}>V\right\}\right|.$$
Lamzouri \cite{La} proved that for $V$ in the range $1\ll V\leq (2/\pi)\log\log p-2\log\log \log p$, we have
\begin{equation}\label{BirchBounds}
 \exp\left(-\exp\left(\frac{\pi}{2} V+O(1)\right)\right) \leq \Phi_{\textup{Bi}}(V) \leq \exp\left(- \exp\left(\left(\frac{\pi}{2}-\delta\right)V+O(1)\right)\right)
\end{equation} where 
$ \delta= \frac{4\pi-\pi^2}{2\pi +8}=0.18880...$. He also conjectured that the lower bound corresponds to the true order of magnitude for $\Phi_{\textup{Bi}}(V)$. The techniques are different in this setting, due to the lack of multiplicativity for these exponential sums. Indeed, in the case of character sums, Bober, Goldmakher, Granville and Koukoulopoulos \cite{BGGK} exploit the relation with $L$-functions and smooth numbers, while ingredients from algebraic geometry and notably Deligne's equidistribution theorem play a central role in \cite{La}.

Lamzouri also showed that the lower bound in \eqref{BirchBounds} holds for the maximum of partial sums of Kloosterman sums. The normalized classical Kloosterman sums are defined by 
$$
\textup{Kl}_p(a,b):=\frac{1}{\sqrt{p}}\sum_{n\in \Fs} e_p(an+b\overline{n}),
$$
where $\overline{n}$ denotes the multiplicative inverse of $n$ modulo $p$. Similarly to Birch sums, Katz \cite{Kat88} proved that $\text{Kl}_p(a, 1)$ becomes equidistributed  according to the Sato-Tate measure as $a$ varies in $\Fs$ and $p\to\infty$. Let $\ph_{(a, b)}(n)=e_p(an+b\overline{n}).$
The method of \cite{La} allows one to prove that in the range  $1\ll V\leq (2/\pi)\log\log p-2\log\log \log p$ we have
$$ \Phi_{\textup{Kl}}(V):=\frac{1}{(p-1)^2}\left|\left\{ (a, b)\in \Fs\times\Fs: \frac{\M(\ph_{(a, b)})}{\sqrt{p}}>V\right\}\right|\geq \exp\left(-\exp\left(\frac{\pi}{2} V+O(1)\right)\right).$$
  However, the argument is not strong enough to yield an upper bound for the distribution function $\Phi_{\textup{Kl}}(V)$ in this case, since it relies on strong bounds for short sums of exponential sums, which are not currently known for Kloosterman sums.

In this paper, we prove Lamzouri's conjecture for the maximum of partial sums of Birch and Kloosterman sums, obtaining estimates for their distribution functions that are as strong as \eqref{BGGK} for character sums.  
We also obtain analogous results for families of periodic functions which satisfy certain hypotheses (see Theorem \ref{Main} below). A corollary of our main theorem is the following result.

\begin{cor}\label{BirchKlooster}
Let $p$ be a large prime. There exists a constant $C$ such that for all real numbers $C\le V\leq (2/\pi)(\log\log p-2\log\log\log p)-C$ we have 
$$  \Phi_{\textup{Kl}}(V) = \exp\left(- \exp\left(\frac{\pi}{2}V+O(1)\right)\right).
$$
The same estimate also holds for $\Phi_{\textup{Bi}}(V).$
\end{cor}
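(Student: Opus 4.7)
The plan is to derive Corollary \ref{BirchKlooster} from the general Theorem \ref{Main} by checking that both families satisfy its hypotheses. The starting point is that in each case the discrete Fourier transform of the phase is simply a shifted member of the same family: for the Birch family a change of variable yields
\begin{equation*}
\widehat{\ph_a}(h) = \frac{1}{\sqrt{p}} \sum_{n \in \F} e_p(n^3 + (a+h)n) = \B(a+h),
\end{equation*}
and similarly $\widehat{\ph_{(a,b)}}(h) = \Kl_p(a+h, b)$ for the Kloosterman family. In particular, by the Weil bound one has $\|\widehat{\ph}\|_\infty \le 2$ for all parameters in both cases, and this uniform bound by $2$ is precisely what should produce the constant $\pi/2$ in the exponent, in direct analogy with the way $e^{\gamma}/\pi$ in \eqref{BGGK} reflects the pointwise bound $|\chi|\le 1$ in the character setting.

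The substantive hypothesis to verify is a joint equidistribution statement for the Fourier coefficients. In view of the identities above, this reduces to showing that for distinct shifts $h_1,\ldots,h_k$, the tuple $(\B(a+h_1),\ldots,\B(a+h_k))$ becomes independent Sato-Tate equidistributed as $a$ runs through $\Fs$, and analogously for Kloosterman sums in $(a,b)$. This is exactly the type of statement to which Deligne's equidistribution theorem applies, provided that the corresponding tensor product of shifted sheaves is geometrically irreducible with large monodromy. The inputs I would invoke are Livn\'e's identification of the geometric monodromy of the Birch sheaf as $\SL_2$ and Katz's analogous theorem for Kloosterman sheaves, combined with a Goursat-Kolchin-Ribet argument to exclude multiplicative relations between distinct shifts.

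The main obstacle will be making this joint equidistribution sufficiently quantitative. The range $V\le (2/\pi)(\log\log p-2\log\log\log p)-C$ forces a low-frequency cut-off $H$ of size roughly a power of $\log p$, and one therefore needs the joint equidistribution to remain valid up to $k\asymp H$ shifts with a conductor bound on the tensor product sheaves that is polynomial in $H$, so that the resulting error in Deligne's theorem is absorbed by the saving $p^{-1/2}$. Granting this, Theorem \ref{Main} replaces the tuple of Fourier coefficients by independent Sato-Tate variables $(X_h)_{|h|\le H}$ and reduces the corollary to computing the large-deviation tail of $\max_x \bigl|\sum_h \overline{\gamma_p(h;x)} X_h\bigr|$. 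A Hal\'asz-type analysis of this random model, following the treatment of Bober-Goldmakher-Granville-Koukoulopoulos in the character case but using the bound $|X_h|\le 2$ in place of $|\chi(h)|\le 1$, then produces the double-exponential tail $\exp(-\exp(\pi V/2 + O(1)))$, which yields the corollary for both $\Phi_{\textup{Bi}}$ and $\Phi_{\textup{Kl}}$.
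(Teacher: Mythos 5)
Your overall strategy (reduce to Theorem \ref{Main} by verifying the hypotheses, with $N=2$ coming from the Weil bound and Assumption 3 coming from Deligne's equidistribution theorem plus a Goursat--Kolchin--Ribet argument for the shifted sheaves) is the same as the paper's, and your identification of $\widehat{\ph_a}(h)=\B(a+h)$ and $\widehat{\ph_{(a,b)}}(h)=\Kl_p(a+h,b)$ is correct. However, there is a genuine gap: you never verify (or even mention) Assumption 4, and your claim that the error in Deligne's theorem ``is absorbed by the saving $p^{-1/2}$'' is precisely the point at which the argument breaks down. Deligne gives $\eta=1/2$ in Assumption 3 for both families, which places them in case C) of Theorem \ref{Main}: with an error term of size $|S|(\log p)^{O(k)}p^{-1/2}$ in the moment computation, one cannot afford to take the maximum over all $p$ sample points $x$ (nor even over $\sqrt{p}$ of them). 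The paper's resolution is Assumption 4 together with Theorem \ref{ReduceMaxPoints}: an averaged bound for the $\alpha$-th moment of short sums over intervals of length up to $p^{1/2+\delta}$, used to cut the set of sample points down to $J\asymp p^{1/2-\delta/5}$ outside a small exceptional set of parameters. For Kloosterman sums this is established by a fourth-moment computation in the style of Kowalski--Sawin ($M_4\ll |I|^2p^{-2}$), and for Birch-type sums by Weyl differencing; this is one of the two new ingredients the paper highlights, and it is exactly what was missing from Lamzouri's earlier work, which could only prove the lower bound for $\Phi_{\textup{Kl}}$. Without it your appeal to Theorem \ref{Main} is not justified for either family.

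Two smaller remarks. First, if you invoke Theorem \ref{Main} as a black box, the final ``Hal\'asz-type analysis of the random model'' is redundant (it is already internal to the theorem); if instead you intend to redo that analysis, note that obtaining the constant $\pi/2$ (rather than $\pi/2-\delta$) in the upper bound requires the sharp evaluation $\G(H)=2\log H+O(1)$ of Theorem \ref{AsympG}, which your sketch does not supply. Second, the monodromy computation needed for the Birch family $\FT(\mathcal{L}_{e_p(T^3)})$ is due to Katz (\cite[$7.13$, $\Sp$-example $(2)$]{Kat90}); Livn\'e's theorem gives the vertical Sato--Tate law for a single Birch sum, not the joint equidistribution of shifts.
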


There are two new ingredients that allow us to prove Lamzouri's conjecture. The first is a non-trivial upper bound for the fourth moment of the maximum over all intervals $I$ of length $|I|\leq p^{1/2+\varepsilon}$ with $\varepsilon>0$ (intervals at the edge of the P\'olya-Vinogradov range), of short sums of Kloosterman sums over $I$ (see Lemma \ref{BoundMomentShort} for a general result of this type). This allows us to obtain the analogue of \eqref{BirchBounds} for $\Phi_{\textup{Kl}}(V)$. The second ingredient is a precise asymptotic formula for the maximum of a certain ``random'' exponential sum (see Theorem \ref{AsympG} below), which we use to replace the constant $\pi/2-\delta$ by $\pi/2$ in the upper bound of \eqref{BirchBounds}, thus proving Lamzouri's conjecture.

\subsection{A general result for the maximum of partial sums of $m$-periodic functions} We shall consider families of periodic functions $\mathcal{F}=\{\ph_a\}_{a\in \Omega_m}$, where $\Omega_m$ is a non-empty finite set, and for each $a\in \Omega_m$, $\ph_a: \mathbb{Z}\to \mathbb{C}$ is $m$-periodic and its Fourier transform $\widehat{\ph}_a$ is real-valued and uniformly bounded. 
For a positive real number $V$, we define
$$ \Phi_{\mathcal{F}}(V):= \frac{1}{|\Omega_m|} \left|\left\{ a\in \Omega_m: \frac{\M(\ph_a)}{\sqrt{m}}>V\right\}\right|.$$
We will obtain precise uniform estimates for this distribution function, assuming that our family $\mathcal{F}$ satisfies certain hypotheses, which are mainly related to the distribution of the Fourier transform $\widehat{\ph}_a$. Such assumptions will be verified by several important functions in analytic number theory, which arise naturally in applications and originate in the deep work of Deligne and others from algebraic geometry. These functions correspond to certain \emph{Frobenius trace functions} modulo $m$, and their analytic properties have been investigated by several authors, and notably in a series of recent works by
Fouvry, Kowalski, and Michel \cite{FKM14b}, \cite{FKM15}, \cite{FKM15b}, \cite{FKM19}, Fouvry,  Kowalski, Michel, Raju, Rivat, and Soundararajan \cite{FKMRRS}, Kowalski and Sawin \cite{KoSa}, and Perret-Gentil \cite{Per17}. In particular, these include the families of trace functions $\mathcal{F}_{\text{Bi}}=\{e_p(n^3+an)\}_{a\in \Fs}$ and $\mathcal{F}_{\text{Kl}}=\{e_p(an+b\overline{n})\}_{(a, b)\in \Fs\times \Fs}$, which give rise to partial sums of Birch and Kloosterman sums respectively.  More specifically, let $\mathcal{F}=\{\ph_a\}_{a\in \Omega_m}$ be a family of $m$-periodic complex valued functions, and consider the following assumptions:

\newpage

\textbf{Assumption 1. Uniform boundedness}

We have $\max_{a\in \Omega_m}||\ph_a||_{\infty}\ll 1,$ where the implied constant is independent of $m$.  

\smallskip

\textbf{Assumption 2. Support of the Fourier transform}

There exists an absolute constant $N>0$ such that for all $a\in \Omega_m$ and $h\in \mathbb{Z}/m\mathbb{Z}$ we have $\ta(h)\in [-N, N]$.

\smallskip

 \textbf{Assumption 3. Joint distribution of the Fourier transform}

There exists a sequence of I.I.D. random variables $\{\X(h)\}_{h\in \mathbb{Z}^*}$ supported on $[-N, N]$, and absolute constants $\eta \ge 1/2$ and $C_1>1$, such that for all positive integers $k\leq \log m/\log\log m$, and all $k$-uples $(h_1, \dots, h_k)\in (-m/2, m/2]^k$ with $h_{i}\neq 0$ for $i=1,...,k$ we have 
$$ 
\frac{1}{|\Omega_m|} \sum_{a\in \Omega_m} \widehat{\ph_a}(h_1) \cdots \widehat{\ph_a}(h_k)= \ex\left(\X(h_1) \dots \X(h_k)\right)+ O\left(\frac{C_1^k}{m^{\eta}}\right).
$$
Furthermore, if we let $\X$ be a random variable with the same distribution as the $\X(h)$, then $\X$ verifies the following conditions:  
\begin{itemize}
\item[3a.] There exists a positive constant $A$ such that for all $\varepsilon\in(0,1]$ we have $\pr(\X> N-\varepsilon) \gg \varepsilon^A$, and $\pr(\X<- N+\varepsilon) \gg \varepsilon^A$.
\item[3b.] For all integers $\ell\geq 0$ we have  
$\ex\left(\X^{2\ell+1}\right)=0 .$
\end{itemize}

\textbf{Assumption 4. Strong bounds for short sums on average}

There exist absolute constants $\alpha \geq 1$, and $0<\delta<1/2$ such that for any interval $I$ of length $|I|\leq m^{1/2+\delta}$, one has
\[
\frac{1}{|\Omega_{m}|}\sum_{a\in\Omega_{m}}\Big|\frac{1}{\sqrt{m}}\sum_{n\in I}\ph_{a}(n)\Big|^{\alpha}\ll m^{-1/2-\delta}.
\]

\smallskip

 Our main result is the following theorem.
\begin{thm}\label{Main}
Let $m$ be large, and $\mathcal{F}=\{\ph_a\}_{a\in \Omega_m}$ be a family of $m$-periodic complex valued functions satisfying one of the following subsets of the above assumptions:
\begin{itemize}
\item[A.] Assumption 2 and Assumption 3 with $\eta>1$.
\item[B.] Assumptions 1, 2, and Assumption 3 with $1/2<\eta\leq1$.
\item[C.] Assumptions 1, 2, 4, and Assumption 3 with $\eta=1/2$.  
\end{itemize} 
Then there exists a constant $B$ such that for all real numbers $B\le V\leq (N/\pi)(\log\log m-2\log\log\log m)-B$ we have 
\begin{equation}\label{MainEstimate}
 \Phi_{\mathcal{F}}(V) = \exp\left(- \exp\left(\frac{\pi}{N}V+O(1)\right)\right).
\end{equation}
\end{thm}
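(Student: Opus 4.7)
The plan is to approximate $\M(\ph_a)/\sqrt{m}$ by the supremum of a truncated Fourier series, and then to match its distribution against a random model built from the $\X(h)$. Starting from the Plancherel identity \eqref{Planche} and the estimate \eqref{Planche2}, for a truncation parameter $H\le m/2$ I would decompose
\begin{equation*}
\frac{1}{\sqrt m}\sum_{0\le n\le x}\ph_a(n)=\G_a(x,H)+\mathcal{E}_a(x,H),\qquad \G_a(x,H):=\sum_{0<|h|\le H}\frac{e_m(-xh)-1}{-2\pi i h}\,\ta(h),
\end{equation*}
the error $\mathcal{E}_a(x,H)$ collecting both the high-frequency tail and an $O(H/m)$ contribution from \eqref{Planche2}. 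The natural probabilistic model is
\begin{equation*}
\G(y):=\sum_{h\ne 0}\frac{e(-hy)-1}{-2\pi i h}\,\X(h),\qquad y\in[0,1],
\end{equation*}
whose sharp tail asymptotic $\pr(\sup_y|\G(y)|>V)=\exp(-\exp(\pi V/N+O(1)))$ is the content of Theorem \ref{AsympG}. The proof reduces to transferring this asymptotic from $\G$ to $\sup_x|\G_a(x,H)|$ and to controlling the error $\mathcal{E}_a(x,H)$.

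For the upper bound I would run a moment argument. Taking $k\asymp e^{\pi V/N}/V$, Assumption 3 matches each $k$-fold product $\ta(h_1)\cdots\ta(h_k)$ averaged over $\Omega_m$ with $\ex(\X(h_1)\cdots\X(h_k))$ up to $O(C_1^k/m^\eta)$, provided $k\le \log m/\log\log m$; this last condition is precisely what enforces the upper endpoint $V\le (N/\pi)(\log\log m-2\log\log\log m)$. Combining this with Theorem \ref{AsympG} via Markov's inequality handles $\sup_x|\G_a(x,H)|$. It remains to dispose of $\mathcal{E}_a(x,H)$, which is where the three cases diverge. In Case A the exponent $\eta>1$ is generous, and Parseval plus the trivial bound $|\ta(h)|\le N$ from Assumption 2 absorb the tail easily. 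In Case B we couple Assumption 1 with a dyadic decomposition of $H$: on long ranges we use \eqref{Planche2}, while on short ranges the trivial $L^\infty$ bound $|\sum_{n\in I}\ph_a(n)|\ll|I|$ suffices. In Case C, with $\eta=1/2$, neither trivial bound is strong enough; Assumption 4 then supplies the needed nontrivial $L^\alpha$-average estimate on short sums over intervals of length up to $m^{1/2+\delta}$, which is exactly what is required to absorb the tail at the Pólya--Vinogradov threshold.

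For the lower bound I would exhibit a proportion $\gg\exp(-\exp(\pi V/N+O(1)))$ of $a\in\Omega_m$ at which a single choice of $x$ yields a partial sum of size $>V\sqrt{m}$. Setting $H\asymp e^{\pi V/N}/V$ and choosing $x$ so that the weights $(e_m(-hx)-1)/(-2\pi i h)$ align with a prescribed sign pattern $(\epsilon_h)\in\{\pm 1\}^{2H}$, I would require $\ta(h)\in[\epsilon_h N-\varepsilon,\epsilon_h N]$ for every $|h|\le H$; this already forces $\G_a(x,H)>V+O(1)$. By Assumption 3a each such local event has probability $\gg\varepsilon^A$ in the random model, and independence gives joint probability $\gg\varepsilon^{2AH}$; taking $\varepsilon\asymp 1/H$ produces the target $\gg\exp(-2AH\log H)=\exp(-\exp(\pi V/N+O(1)))$. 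Transferring this bound from $\G$ to $\Omega_m$ is again effected by Assumption 3, after smoothing the indicator of the joint event into a polynomial of degree $O(H)$ in the $\ta(h)$ and checking that the attached moment errors remain admissible in our range of $V$.

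The main obstacle is obtaining the sharp constant $\pi/N$ rather than the weaker constants appearing in \eqref{BirchBounds}. A direct moment comparison loses a factor both in truncating the Fourier expansion and in analyzing the tail of the random model; recovering the optimal constant rests on the precise asymptotic of Theorem \ref{AsympG} for $\sup_y|\G(y)|$, which crucially exploits the full-support hypothesis of Assumption 3a and the symmetry in Assumption 3b. A second technical hurdle arises in Case C with $\eta=1/2$, where the critical fourth-moment bound for short sums (Lemma \ref{BoundMomentShort}) is precisely the new input that allows the method to reach the Pólya--Vinogradov range for Kloosterman sums.
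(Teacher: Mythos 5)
The main gap is in your lower bound. With $H\asymp e^{\pi V/N}/V$, even if every $\ta(h)$ were pinned exactly at $\epsilon_h N$, the truncated sum could not exceed $\frac{N}{2\pi}\G(H)=\frac{N}{\pi}\log H+O(1)=V-\frac{N}{\pi}\log V+O(1)$ (by Theorem \ref{AsympG}), so the event you construct does not force $\G_a(x,H)>V$. If you instead enlarge $H$ to $\asymp e^{\pi V/N}$ so that the head can reach $V$, the entropy cost of confining all $2H$ coordinates to within $\varepsilon$ of the extremes is $\exp(-cAH\log(1/\varepsilon))$, and $\varepsilon$ must be taken $\ll 1/V$ to keep the $O(\varepsilon\log H)$ loss in the sum bounded; the resulting lower bound $\exp\bigl(-e^{\pi V/N+\log\log V+O(1)}\bigr)$ misses the claimed $\exp\bigl(-e^{\pi V/N+O(1)}\bigr)$ because $\log\log V$ is unbounded. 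Pinning every coordinate near its extreme is not the optimal large-deviation strategy: the correct tilting biases $\X(h)$ with a strength depending on $h$. The paper proves the lower bound (Theorem \ref{LowerB}) by a saddle-point analysis of the moment generating function of $\frac{1}{\sqrt m}\im\sum_{0\le n\le m/2}\ph_a(n)$: Proposition \ref{AsympLaplace} gives $\mathcal{L}_{\X}(s)=\exp(\frac{N}{\pi}s\log s+B_0s+O(\log^2 s))$, Proposition \ref{Laplace} transfers this to the family via Assumption 3, and the choice $s=e^{\pi V/N+O(1)}$ yields the sharp constant (indeed an asymptotic for the first-level exponent). Some form of this exponential tilting is unavoidable if you want the second-level $O(1)$.

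Two further points on the upper bound. First, Theorem \ref{AsympG} is not a tail asymptotic for a random series; it is the deterministic statement $\G(H)=2\log H+2\log2+2\gamma+O(1/H)$ for the extremal quantity $\max_{\alpha}\max_{y\in[-1,1]^{2H}}\bigl|\sum_{1\le|h|\le H}\frac{e(\alpha h)-1}{h}y_h\bigr|$, and it is used with Assumption 2 alone to bound the head $|h|\le H$ pointwise in $a$ by $\frac{N}{\pi}\log H+O(1)$; the moment--Markov argument is applied only to the tail $|h|>H$. Second, the real reason the three cases diverge is not the high-frequency tail itself but the cardinality $|S|$ of the set of evaluation points of the maximum: the moment comparison of Theorem \ref{MomentsMaxTail} carries an error $|S|(4C_1\log m)^{8k}/m^{\eta}$, so one needs $|S|$ well below $m^{\eta}$. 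Assumption 1 discretizes to $|S|\asymp\sqrt m$ points (case B), and Assumption 4, through the maximal short-sum estimate of Lemma \ref{BoundMomentShort} and Theorem \ref{ReduceMaxPoints}, pushes this down to $m^{1/2-\delta/5}$ outside an exceptional set (case C). Finally, your choice $k\asymp e^{\pi V/N}/V$ should be $k\asymp e^{\pi V/N}$, matched with $H=10^5N^2k$, so that the head bound $\frac{N}{\pi}\log k+O(1)$ sits just below $V$ and the event $\M(\ph_a)/\sqrt m>V$ forces the tail to exceed an absolute constant.
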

\begin{rem}
Case C) is the most interesting and difficult case of Theorem \ref{Main}. In particular, all the examples of trace functions we consider (including Kloosterman sums, see Corollaries \ref{GBirch}, \ref{GKloo} and \ref{HypKloo}) fall into this case. For these examples, the saving of $\sqrt{m}$ in the error term of Assumption 3 follows from Deligne's equidistribution theorem. 
\end{rem}
\begin{rem}\label{ShortKoWa}
Assumption 4 was first considered by Kowalski and Sawin \cite{KoSa} but for a different purpose. The authors of \cite{KoSa} investigated \emph{Birch and Kloosterman paths}, which are the polygonal paths formed by linearly interpolating the partial sums of Birch and Kloosterman sums. They used Assumption 4 to establish a weak-compactness property known as \emph{tightness}, which was necessary in order to show that the processes obtained from Birch and Kloosterman paths converge in law (in the Banach space $C[0, 1]$) to a random Fourier series (which is the series inside the absolute value in \eqref{RandomKoSa} below). In our case, we found a new argument that allows us to use Assumption 4 (which holds for Kloosterman sums) instead of strong point wise bounds for short sums of exponential sums, which were needed in \cite{La}. 
\end{rem}
\begin{rem}\label{SizeOmega}
One can wonder whether a condition on the size of $\Omega_m$ is necessary to prove Theorem \ref{Main}. In fact, such a condition is implicitly contained in Assumptions 2 and 3. More specifically, we show in Lemma \ref{SizeOmegam} below that if $\mathcal{F}=\{\ph_a\}_{a\in \Omega_m}$ satisfies these assumptions, then we must have $|\Omega_m|\gg m$.
\end{rem}
One should note that the implicit upper bound for $\Phi_{\mathcal{F}}(V)$ in Theorem \ref{Main} holds in the slightly larger range $B'\le V\leq (N/\pi)(\log\log m-\log\log\log m)-B'$ for some constant $B'$ that depends at most on the parameters in the assumptions of Theorem \ref{Main}. 
Moreover, our proof of the implicit lower bound gives a much more precise estimate. In this case only Assumptions 2 and 3 are needed.  
\begin{thm}\label{Lower}
Let $m$ be large, and $\mathcal{F}=\{\ph_a\}_{a\in \Omega_m}$ be a family of $m$-periodic complex valued functions satisfying Assumptions 2 and 3 above. For all real numbers $1\le V\leq (N/\pi)(\log\log m-2\log\log\log m-B)$ we have 
$$ \Phi_{\mathcal{F}}(V) \geq \exp\left(-A_0 \exp\left(\frac{\pi}{N} V\right) \left(1+O\left(V e^{-\pi V/(2N)}\right)\right)\right)
$$  where 
\begin{equation}\label{A0}
A_0= \frac{N}{2}\exp\left(-\gamma-1 -\frac{1}{2N} \int_{-\infty}^{\infty} \frac{\f(u)}{u^2}du \right), \ \  B= \log A_0 +9,
\end{equation}
 $\gamma$ is the Euler-Mascheroni constant, and $\f:\mathbb{R}\to \mathbb{R}$ is defined by
\begin{equation}\label{Thefunctionf}
\f(t):=\begin{cases} \log \ex (e^{t\X}) & \text{ if } |t| <1,\\ \log \ex (e^{t\X})- N|t| & \text{ if } |t|\geq 1,\end{cases}
\end{equation}
where $\X$ is a random variable with the same distribution as the $\{\X(h)\}_{h\in \mathbb{Z}^*}$ in Assumption 3 above.
\end{thm}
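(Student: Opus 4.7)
The plan is to lower-bound $\M(\ph_a)$ by the value of the partial sum at a single well-chosen point $x^{\ast}$, recognise that value as a linear functional in the Fourier coefficients $\widehat{\ph_a}(h)$, compare it to the IID random model $\{\X(h)\}$ from Assumption~3, and extract the claimed tail bound from a precise saddle-point analysis of the resulting random series.

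\textbf{Step 1.} Choose $x^{\ast}\in[0,m)$ essentially equal to $m/2$. By the discrete Plancherel formula \eqref{Planche} combined with \eqref{Planche2},
$$\frac{1}{\sqrt{m}}\sum_{0\le n\le x^{\ast}}\ph_a(n)=\sum_{0<|h|\le m/2}c_h\,\widehat{\ph_a}(h)=:L_a,\qquad c_h=\frac{1-e_m(-hx^{\ast})}{2\pi ih}+O(1/m).$$
The choice of $x^{\ast}$ makes $|c_h|\asymp 1/|h|$ on a positive-density subset of $h$ (the odd integers). Since $\M(\ph_a)/\sqrt{m}\ge|L_a|$, it suffices to lower-bound $|\{a\in\Omega_m:|L_a|>V\}|/|\Omega_m|$.

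\textbf{Step 2: saddle-point for the random model.} Let $L:=\sum_{0<|h|\le m/2}c_h\,\X(h)$. Using Assumption~3b (symmetry of $\X$) and the splitting of $\log\ex(e^{u\X})$ into $\f(u)+N|u|\mathbf{1}_{|u|\ge 1}$ from \eqref{Thefunctionf},
$$\log\ex(e^{s\,\im(L)})=2\!\!\sum_{h>0,\,h\,\text{odd},\,h\le m/2}\!\!\Bigl(\f(s/(\pi h))+\frac{Ns}{\pi h}\mathbf{1}_{h\le s/\pi}\Bigr)+O(1).$$
Applying the harmonic-sum asymptotic to the second term and Euler--Maclaurin to the first then gives the precise asymptotic
$$\log\ex(e^{s\,\im(L)})=\frac{N}{\pi}s\log s+\frac{N}{\pi}s\Bigl(\gamma+\log 2-\log\pi+\frac{1}{2N}\int_{\mathbb{R}}\frac{\f(u)}{u^2}du\Bigr)+o(s).$$
Legendre-transforming at the saddle $s^{\ast}=\tfrac{\pi}{2}e^{-\gamma-1}\exp(-\tfrac{1}{2N}\int\f/u^2)\cdot e^{\pi V/N}$, and using Assumption~3a to guarantee that $\int\f/u^2$ is finite, one obtains
$$\pr(|L|>V)\ge\exp\bigl(-A_0\,e^{\pi V/N}(1+O(Ve^{-\pi V/(2N)}))\bigr),$$
with $A_0$ exactly as in \eqref{A0}; the Gaussian fluctuation around the saddle accounts for the stated relative error.

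\textbf{Step 3: transfer via moments.} Expanding $L_a^{k_1}\overline{L_a}^{k_2}$ as an iterated sum in $k_1+k_2$ frequencies and applying Assumption~3 term by term,
$$\frac{1}{|\Omega_m|}\sum_{a\in\Omega_m}L_a^{k_1}\overline{L_a}^{k_2}=\ex(L^{k_1}\overline{L}^{k_2})+O\bigl((C_1\log m)^{k_1+k_2}m^{-\eta}\bigr),$$
using $\sum_h|c_h|\ll\log m$. The error is negligible for $k_1+k_2\le\log m/\log\log m$. A polynomial approximation of $\mathbf{1}_{|y|>V}$ of that degree on the relevant scale, combined with Step~2, yields $\Phi_{\mc{F}}(V)\ge \pr(|L|>V)(1+o(1))$, as required.

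\textbf{Main obstacle.} The delicate step is Step~2: isolating the \emph{precise} constant $A_0$ of \eqref{A0} (not just an undetermined $\exp(-Ce^{\pi V/N})$) requires tracking every subleading contribution to the log-MGF -- the $-\gamma-1$ from the harmonic sum $\sum_{h\le s/\pi}1/h$, the $\log 2$ from the parity restriction, and the convergent integral $\int\f/u^2\,du$ from the Euler--Maclaurin correction on the smooth part -- as well as the second-derivative control at the saddle needed for the relative error $O(Ve^{-\pi V/(2N)})$. Step~3 is softer as long as $V$ remains in the stated range, which forces $s^{\ast}\le(\log m)^{O(1)}$ and keeps the polynomial-degree cap $\log m/\log\log m$ from Assumption~3 sufficient.
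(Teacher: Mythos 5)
Your Steps 1 and 2 match the paper's strategy: the paper also works with the imaginary part of the partial sum at $x=m/2$, whose Fourier coefficients $\gamma_m(h)$ are $-1/(\pi h)+O(1/m)$ for odd $h$ and $O(1/m)$ for even $h$, and Proposition \ref{AsympLaplace} is exactly your log-MGF asymptotic, with the same three sources for the constant ($-\gamma-1$ from the harmonic sum at the saddle, $\log 2$ from the parity restriction, and $\int \f(u)u^{-2}du$ from the smooth part). Two quantitative remarks there: the paper needs the error $O(\log^2 s)$ rather than $o(s)$ in the log-MGF to reach the stated relative error $O(Ve^{-\pi V/(2N)})$, and finiteness of $\int \f(u)u^{-2}du$ uses Assumption 3a through the bound $\f(t)\ll\log(2|t|)$ for $|t|\ge1$ (Lemma \ref{LogEx}), which is worth saying explicitly.

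The genuine gap is in Step 3. You transfer \emph{polynomial} moments $L_a^{k_1}\overline{L_a}^{k_2}$ and then invoke "a polynomial approximation of $\mathbf{1}_{|y|>V}$ of that degree on the relevant scale." This fails in the upper part of the range of $V$. The target probability is $P=\exp(-e^{\pi V/N})$, and to recover it with relative accuracy $O(Ve^{-\pi V/(2N)})$ you must (i) confine the region where the approximant misbehaves to a window of width $w\ll Ve^{-\pi V/(2N)}$ around the jump (else the window alone carries mass exceeding the allowed slack), and (ii) make the approximation error off that window smaller than $P$ itself. For a degree-$K$ polynomial bounded on the range $[-M,M]$ of $L$ (with $M\asymp\log\log m$ after removing an exceptional set), Chebyshev-type bounds force $K\gg (M/w)\,e^{\pi V/N}\gg M e^{3\pi V/(2N)}/V$, which near $V=(N/\pi)(\log\log m-2\log\log\log m-B)$ is of order $(\log m)^{3/2-o(1)}$ — far beyond the degree cap imposed by Assumption 3. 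Moreover your stated cap is itself too generous: with $k_1+k_2=\log m/\log\log m$ the transfer error $(C_1\log m)^{k_1+k_2}m^{-\eta}=m^{1-\eta+o(1)}$ diverges for $\eta\le1$; one must take $k_1+k_2\le c\log m/\log\log m$ with $c$ small, as the paper does with $L=\lfloor\log m/(20\log\log m)\rfloor$. The paper circumvents the indicator-approximation problem entirely: Proposition \ref{Laplace} transfers the \emph{exponential} moments $\frac{1}{|\Omega_m|}\sum_a e^{s\Sigma_a}$ (the truncated Taylor series of $e^{s\Sigma_a}$ converges rapidly once $|s\Sigma_a|\ll L$ on the non-exceptional set), and then Theorem \ref{LowerB} runs the saddle-point localization directly on the empirical Laplace transform $\int e^{st}\widetilde{\Psi}_{\mathcal F}(t)\,dt$, using only the monotonicity of $\widetilde{\Psi}_{\mathcal F}$ to extract $\widetilde{\Psi}_{\mathcal F}(V)$. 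To repair your argument, replace Step 3 by this Laplace-transform transfer; as written, Step 3 does not yield the theorem in the full stated range of $V$.
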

As an application of Theorem \ref{Lower} (more specifically of Theorem \ref{LowerB} which is stronger), we exhibit large values of partial sums in families of periodic functions $\{\ph_a\}_{a\in \Omega_m}$ satisfying Assumptions 2 and 3. This was obtained by Lamzouri \cite{La} for Birch and Kloosterman sums, and independently by Bonolis \cite{Bo} for more general trace functions (though with a smaller constant). 
\begin{cor}\label{Main3} Let $m$ be large, and $\mathcal{F}=\{\ph_a\}_{a\in \Omega_m}$ be a family of $m$-periodic complex valued functions satisfying Assumptions 2 and 3 above. There exist at least $|\Omega_m|^{1-1/\log\log m}$ elements $a\in \Omega_m$ such that 
 \begin{equation}\label{OmegaResult}
 \left|\sum_{0\leq n\leq m/2} \ph_a(n)\right|\ge \left(\frac{N}{\pi}+o(1)\right) \sqrt{m}\log\log m.
 \end{equation}
\end{cor}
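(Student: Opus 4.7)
The plan is to invoke a strengthening of Theorem \ref{Lower} --- the variant hinted at in the preceding paragraph --- that provides the same exponential lower bound on the proportion of $a \in \Omega_m$, but with the specific partial sum $S_a := \sum_{0\le n\le m/2}\ph_a(n)$ in place of the maximum $\M(\ph_a)$. Concretely, I expect that for $V$ in the same admissible range as Theorem \ref{Lower},
$$\frac{|\{a \in \Omega_m : |S_a|/\sqrt{m} > V\}|}{|\Omega_m|}\;\ge\; \exp\!\bigl(-A_0 e^{\pi V/N}(1+o(1))\bigr).$$
The corollary will then follow by choosing $V$ near the upper end of this range and combining with the lower bound $|\Omega_m|\gg m$ from Remark \ref{SizeOmega}.

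To obtain the sharpened lower bound, I would begin from \eqref{Planche} with $x=m/2$, use \eqref{Planche2} together with $e_m((m/2)h) = (-1)^h$, and derive
$$\frac{S_a}{\sqrt{m}} = \frac{1}{\pi i}\sum_{\substack{0 < |h| \le m/2 \\ h\text{ odd}}}\frac{\widehat{\ph_a}(h)}{h} + O(1),$$
the error being absorbed by Assumption 2. One then repeats the argument used to prove Theorem \ref{Lower}, but for this fixed $x=m/2$ instead of the maximum over $x$: compare moments of the above Fourier series to those of the i.i.d.\ model $\sum_{h\text{ odd}}\X(h)/(\pi i h)$ via Assumption 3 (with $k\le \log m/\log\log m$), exploit Assumption 3a to force many $\widehat{\ph_a}(\pm h)$ simultaneously near $\pm N$ with a quantitatively controlled (Paley--Zygmund-type) probability, and use Assumption 3b for the odd-moment cancellations in the Laplace transform computation. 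The leading constant $N/\pi$ (and hence $A_0$) persists because $\sum_{h \text{ odd},\,h\le H}1/h \sim (\log H)/2$ combines with the contributions from $h>0$ and $h<0$ to reproduce the full harmonic mass.

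Setting $V_m := (N/\pi)(\log\log m - 2\log\log\log m - B)$ at the upper endpoint gives $\exp(\pi V_m/N) = e^{-B}\log m/(\log\log m)^2$, so the right-hand side of the displayed lower bound is at least $m^{-C/(\log\log m)^2}$ for some $C>0$. Multiplying by $|\Omega_m|$ and using $|\Omega_m|\gg m$, the number of admissible $a$ is at least $|\Omega_m|^{\,1-O(1/(\log\log m)^2)} \ge |\Omega_m|^{\,1-1/\log\log m}$ for $m$ large, while $V_m = (N/\pi + o(1))\log\log m$, so $|S_a| > V_m\sqrt{m}$ yields exactly \eqref{OmegaResult}. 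The main obstacle is purely in the strengthening of Theorem \ref{Lower} itself; once one observes that the odd-harmonic weights $1/h$ are the natural (essentially extremal) resonator for the half-interval $[0,m/2]$, the adaptation should be a routine reorganization of the proof of Theorem \ref{Lower} rather than a new argument.
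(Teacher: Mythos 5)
Your proposal is correct and follows essentially the same route as the paper: the "strengthening" you posit is exactly the paper's Theorem \ref{LowerB}, which gives a sharp asymptotic for the proportion of $a$ with $\frac{1}{\sqrt m}\im\sum_{0\le n\le m/2}\ph_a(n)>V$ (proved, as you sketch, by passing to the Fourier expansion at $x=m/2$, matching moments with the random model via Assumption 3, and applying the saddle-point method to the moment generating function). The paper then concludes exactly as you do, by evaluating at the endpoint $V_m$ of the admissible range and invoking Lemma \ref{SizeOmegam} ($|\Omega_m|\gg m$).
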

Given a family $\mathcal{F}=\{\ph_a\}_{a\in \Omega_m}$ of $m$-periodic complex valued functions satisfying the assumptions in Theorem \ref{Main}, a natural question to ask is which of the bounds \eqref{PV2} and \eqref{OmegaResult} is optimal (up to a constant). If we suppose that the estimate \eqref{MainEstimate}
is valid in the whole ``viable'' range, that is for $1\ll V< V_{\text{max}}:= \max_{a\in \Omega_m} \M(\varphi_a)/\sqrt{m}$, then we would have
$$ 
 \exp\left(-\exp\left(\frac{\pi}{N} V_{\text{max}}+O(1)\right)\right)= \Phi_{\mathcal{F}}(V_{\text{max}}-o(1))\geq \frac{1}{|\Omega_m|},
$$
and hence
$$ 
V_{\text{max}}\leq \frac{N}{\pi} \log\log |\Omega_m|+O(1).
$$
In particular, if $|\Omega_m|\ll m^B$ with an absolute constant $B>0$ (which is the case in all the families we consider), this simple
heuristic argument suggests that 
\begin{equation}\label{HeuristBound}
 \max_{a\in \Omega_m} \M(\ph_a) \ll  \sqrt{m} \log\log m,
\end{equation}
a bound similar to the one proved by Montgomery and Vaughan for character sums under the assumption of the Generalized Riemann Hypothesis. Surprisingly, we show that unlike this case (in which multiplicativity plays a central role), the above heuristic argument is false for certain families of $m$-periodic complex valued functions satisfying the assumptions in case A)  of Theorem \ref{Main} (namely Assumption 2, and Assumption 3 with $\eta>1$).  More precisely, we construct such a family $\mathcal{F}=\{\ph_a\}_{a\in \Omega_m}$ with $|\Omega_m|\asymp m^3$, for which the P\'olya-Vinogradov inequality \eqref{PV2} is sharp (up to the value of the implicit constant). This suggests the existence of a transition in the behavior of the distribution function $\Phi_{\mathcal{F}}(V)$ near the maximal values. It also confirms the common belief in analytic number theory that the P\'olya-Vinogradov inequality, though simple to derive, is extremely difficult to improve. 
\begin{pro}\label{HugeFamily}
Let $m$ be large. There exists a family $\mathcal{F}=\{\ph_a\}_{a\in \Omega_m}$ of $m$-periodic complex valued functions satisfying Assumption 2 with $N=1$ and Assumption 3 with $\eta=4/3$, such that $|\Omega_m|\asymp m^3$ and 
$$ \max_{a\in \Omega_m} \M(\ph_a) \geq \frac{1}{\pi}\sqrt{m}\log m +O(\sqrt{m}).$$
\end{pro}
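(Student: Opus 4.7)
My plan is to augment a large, well-behaved ``background'' family by a single pathological element designed to saturate the P\'olya--Vinogradov bound; the outlier is invisible to the $|\Omega_{m}|$-averaged moment identity in Assumption 3, but visible to $\max_{a\in\Omega_{m}}\M(\ph_{a})$. First I define $\ph^{*}$ by $\widehat{\ph^{*}}(0):=0$ and $\widehat{\ph^{*}}(h):=\mathrm{sign}(h)$ for $h\in(-m/2,m/2]\setminus\{0\}$, which trivially satisfies Assumption 2 with $N=1$. Using the antisymmetry $\widehat{\ph^{*}}(-h)=-\widehat{\ph^{*}}(h)$ in \eqref{Planche} together with \eqref{Planche2} gives
$$
\sum_{0\leq n\leq m/2}\ph^{*}(n)=\frac{\sqrt{m}}{\pi i}\sum_{0<h\leq m/2}\frac{1-\cos(\pi h)}{h}+O(\sqrt{m}).
$$
Since $1-\cos(\pi h)$ vanishes at even $h$ and equals $2$ at odd $h$, this collapses to $(2\sqrt{m}/(\pi i))\sum_{h\leq m/2,\,h\text{ odd}} h^{-1}=(\sqrt{m}/(\pi i))\log m+O(\sqrt{m})$, hence $\M(\ph^{*})\geq \pi^{-1}\sqrt{m}\log m+O(\sqrt{m})$.

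Next I take $m=p$ prime and let $\tilde{\chi}$ denote the Legendre symbol extended by $\tilde{\chi}(0):=1$, so that $\tilde{\chi}(x)\in\{\pm1\}$ for every $x\in\F$. I set $\Omega_{0}:=\F^{3}$ and define the background family by
$$
\widehat{\ph_{(b,c,d)}}(h):=\tilde{\chi}(h-b)\,\tilde{\chi}(h-c)\,\tilde{\chi}(h-d),
$$
which satisfies Assumption 2 with $N=1$. For Assumption 3, the independence of $(b,c,d)$ gives
$$
\frac{1}{p^{3}}\sum_{(b,c,d)\in\F^{3}}\prod_{j=1}^{k}\widehat{\ph_{(b,c,d)}}(h_{j})=\Bigl(\frac{1}{p}\sum_{b\in\F}\prod_{j=1}^{k}\tilde{\chi}(h_{j}-b)\Bigr)^{3}.
$$
Grouping the $h_{j}$'s by distinct values with multiplicities and using $\tilde{\chi}^{2}\equiv 1$, only indices with odd multiplicity survive in the product. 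The inner $b$-sum equals $p$ exactly when every multiplicity is even (matching $\ex[\X^{2\ell}]=1$ with zero error), and is $O(k\sqrt{p})$ by Weil's theorem otherwise (matching $0$ with error $O(k/\sqrt{p})$). Cubing yields Assumption 3 with $\X$ uniform on $\{\pm1\}$ and $\eta=3/2>4/3$; conditions 3a and 3b are immediate.

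Finally I set $\Omega_{m}:=\Omega_{0}\cup\{a^{*}\}$ with $\ph_{a^{*}}:=\ph^{*}$. Adjoining one element perturbs any $k$-point moment by at most $N^{k}/|\Omega_{m}|=O(1/p^{3})$, which is absorbed into the Step~2 error; so Assumptions 2 and 3 are preserved, $|\Omega_{m}|\asymp m^{3}$, and $\max_{a\in\Omega_{m}}\M(\ph_{a})\geq\M(\ph^{*})\geq\pi^{-1}\sqrt{m}\log m+O(\sqrt{m})$. The hardest part is the conceptual observation that a single outlier is invisible to the averaged moment identity. The technical work sits in the Plancherel calculation of Step~1 (where the $\log m$ emerges from the odd harmonic sum) and in the Weil-type moment bound of Step~2. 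The convention $\tilde{\chi}(0)=1$ is essential: with the unmodified Legendre symbol the all-even-multiplicity case would contribute an error of order $k/p$, too large for any $\eta>1$.
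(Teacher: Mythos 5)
Your construction is correct in substance but takes a genuinely different route from the paper's. The paper indexes the family by an auxiliary field $\mathbb{F}_{2^r}$ with $2^r\asymp m^3$, sets $\widehat{\ph_a}(h)=\psi(P(\alpha_h a))$ for an odd polynomial $P$ and a nontrivial $\{\pm1\}$-valued additive character $\psi$, and \emph{chooses the injection} $h\mapsto\alpha_h$ so that the member $a=1$ already has $\widehat{\ph_1}(h)=\mathrm{sign}(h)$-type values; Assumption 3 then follows from Weil's bound applied to $\psi(P(\alpha_{h_1}X)+\cdots+P(\alpha_{h_k}X))$, which has odd degree and a nonzero coefficient by Vandermonde. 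You instead build the extremal function $\ph^{*}$ by hand and adjoin it to an independent three-fold Legendre-symbol family, noting that a single outlier among $\asymp m^{3}$ indices perturbs every $k$-point moment by only $O(m^{-3})$, which is dominated by the $O(C_1^k m^{-3/2})$ Weil error. Both arguments rest on the same two pillars — the Plancherel computation giving $|\sum_{0\le n\le m/2}\ph^{*}(n)|=\pi^{-1}\sqrt m\log m+O(\sqrt m)$ from the odd harmonic sum, and square-root cancellation yielding any $\eta<3/2$ — but your ``invisible outlier'' mechanism is a clean, modular alternative to the paper's trick of rigging the parametrization so that the extremal function is already in the family. Your normalization $\tilde\chi(0)=1$ is indeed the right move: it makes the all-even-multiplicity moments exact, which is what lets you beat $\eta=1$. (Note also that the Weil bound applies because distinct $h_j\in(-p/2,p/2]$ reduce to distinct residues, so the relevant polynomial in $b$ is squarefree and not a square.)

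One genuine caveat: as written your construction only works for $m=p$ prime, since $\tilde\chi(h-b)$ requires $h$ and $b$ to live in the same field $\F$, whereas the proposition is asserted for every large $m$. This is easily repaired — index over $\mathbb{F}_q^3$ for the least prime $q>m$ (so $q\le 2m$ and $|\Omega_m|\asymp m^3$ still holds) and map $h\in(-m/2,m/2]$ injectively into $\mathbb{F}_q$ before applying $\tilde\chi$; this injection plays exactly the role of the paper's $h\mapsto\alpha_h$. You should state this step, since without it the proposition is only proved for prime moduli.
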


\begin{rem}
The family we construct in Proposition \ref{HugeFamily} does not satisfy Assumption 1. In fact one has $\max_{a\in \Omega_m}||\ph_a||_{\infty}\asymp \sqrt{m}$ for this family. One therefore wonders whether a similar result to Proposition \ref{HugeFamily} holds for certain families of $m$-periodic complex valued functions satisfying the assumptions in case C) of Theorem \ref{Main}, which is the case of most interest. Unfortunately, we were unable to construct such families. However, it seems plausible that in this case there are less fluctuations in the partial sums of $\ph_a$, and that a bound similar to \eqref{HeuristBound} holds. \end{rem} 

It follows from the results of Kowalski and Sawin \cite{KoSa} that
$$ \lim_{p\to\infty} \Phi_{\textup{Kl}}(V)= \lim_{p\to\infty}\Phi_{\textup{Bi}}(V)=\pr(\mathbb{M}_{\textup{st}}>V),$$
for any fixed $V$ for which $\pr(\mathbb{M}_{\textup{st}}>V)$ is continuous, where 
\begin{equation}\label{RandomKoSa}
\mathbb{M}_{\textup{st}}=\max_{\alpha\in [0, 1)} \left| \alpha \Y(0)+ \sum_{h\neq 0} \frac{e(\alpha h)-1}{2\pi i h} \Y(h)\right|,
\end{equation}
and $\{\Y(h)\}_{h\in \mathbb{Z}}$ is a sequence of independent random variables with Sato-Tate distributions on $[-2, 2]$.
A straightforward generalization of their argument shows that if $m$ is large and $\mathcal{F}=\{\ph_a\}_{a\in \Omega_m}$ is a family of $m$-periodic complex valued functions satisfying Assumptions 1, 2, 3, and 4, then for $V\ge 1$ fixed we have
$$\lim_{m\to \infty} \Phi_{\mathcal F}(V)=\pr(\mathbb{M}_{\X}>V),$$
where 
$$
\mathbb{M}_{\X}=\max_{\alpha\in [0, 1)} \left| \alpha \X(0)+ \sum_{h\neq 0} \frac{e(\alpha h)-1}{2\pi i h} \X(h)\right|,
$$
and $\{\X(h)\}_{h\in \mathbb{Z}}$ is a sequence of I.I.D. random variables supported on $[-N, N]$ and satisfying Assumptions 3a and 3b above.
Combining this result with Theorem \ref{Main} leads to the following estimate for the large deviations of the random model $\mathbb{M}_{\X}$, which improves on the estimates of Lamzouri \cite{La} and Kowalski-Sawin \cite{KoSa} for the large deviations of $\mathbb{M}_{\textup{st}}$.
\begin{cor}\label{LargeDevRand}
Let $\{\X(h)\}_{h\in \mathbb{Z}}$ be a sequence of I.I.D. random variables supported on $[-N, N]$ and satisfying Assumptions 3a and 3b above. For all $V\gg 1$ we have 
$$ \pr(\mathbb{M}_{\X}>V)= \exp\left(- \exp\left(\frac{\pi}{N}V+O(1)\right)\right).$$

\end{cor}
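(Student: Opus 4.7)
The plan is to combine the convergence statement recalled just before the corollary---namely $\lim_{m\to\infty}\Phi_{\mathcal{F}}(V)=\pr(\mathbb{M}_{\X}>V)$ at fixed $V$---with the uniform estimate of Theorem \ref{Main}. The crucial feature is that the implicit constants in Theorem \ref{Main} depend only on the constants appearing in Assumptions 1--4 and are therefore uniform in $m$. Fix a distribution $\X$ on $[-N,N]$ satisfying Assumptions 3a and 3b, and produce, for each large $m$, a family $\mathcal{F}_{m}=\{\ph_a\}_{a\in\Omega_m}$ of $m$-periodic complex valued functions satisfying Assumptions 1--4 whose Fourier coefficients have joint distribution (asymptotically in $m$) matching the i.i.d.\ sequence $\{\X(h)\}$. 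For $\X$ equal to the Sato--Tate law on $[-2,2]$ the Kloosterman family $\mathcal{F}_{\text{Kl}}$ works; for a general $\X$ satisfying 3a and 3b, a probabilistic construction suffices, where one samples the coefficients $\widehat{\ph_a}(h)$ as independent copies of $\X$ subject to appropriate realness conditions and verifies that Assumptions 1, 2, and 4 hold for a suitable realization.

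Given such a family, the deduction is short: for any $V\gg 1$ and $m$ large enough that $V\le (N/\pi)(\log\log m-2\log\log\log m)-B$, Theorem \ref{Main} gives $\Phi_{\mathcal{F}_m}(V)=\exp(-\exp(\pi V/N+O(1)))$ with constants independent of $m$; letting $m\to\infty$ and invoking the cited convergence at continuity points of the limiting distribution yields the corollary on a cocountable set of $V$, and monotonicity of $V\mapsto\pr(\mathbb{M}_{\X}>V)$ together with continuity of the bounding function $V\mapsto\exp(-\exp(\pi V/N+C))$ extends the estimate to every $V\gg 1$.

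The main obstacle is the family construction for a general $\X$: verifying Assumption 4 (the $L^{\alpha}$-bound on short sums) for a probabilistic family requires a careful second-moment calculation and union bound over intervals of length $\le m^{1/2+\delta}$, which is the most delicate of the four hypotheses to check. An attractive alternative bypasses family construction altogether and adapts the proof of Theorem \ref{Main} directly to the random model: truncate the series defining $\mathbb{M}_{\X}$ at $|h|\le H=\exp(\pi V/N+O(1))$, discretize $\alpha\in[0,1)$ at scale $H^{-2}$, and apply Chernoff for each discretized $\alpha$ using the exponential moment $\ex(\exp(t\X))$, whose logarithm grows like $N|t|+O(\log|t|)$ for large $|t|$ (from Assumption 3a) and like $O(t^{2})$ for small $|t|$ (from Assumption 3b). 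Optimizing at $t\asymp\exp(\pi V/N)$ yields the upper bound, while Theorem \ref{Lower}, reinterpreted in the random setting, yields the matching lower bound for $\pr(\mathbb{M}_{\X}>V)$.
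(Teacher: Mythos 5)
Your first route is the deduction the paper itself gestures at (``combining this result with Theorem \ref{Main}''), and for a distribution $\X$ that is actually realized by one of the arithmetic families (e.g.\ the Sato--Tate law via $\mathcal{F}_{\textup{Kl}}$) it works as you describe, since the constants in Theorem \ref{Main} are uniform in $m$. But for an \emph{arbitrary} $\X$ supported on $[-N,N]$ satisfying 3a and 3b you would need, for each large $m$, a family satisfying Assumptions 1--4 whose Fourier coefficients realize $\X$; you correctly flag this as the obstacle but do not close it, and it is a real one. In particular, a function whose Fourier coefficients are prescribed i.i.d.\ samples of a mean-zero $\X$ typically has $\|\ph_a\|_\infty$ of size $\sqrt{\log m}$ or worse, not $O(1)$ --- indeed the paper's own construction of this type in Proposition \ref{HugeFamily} has $\|\ph_a\|_\infty\asymp\sqrt{m}$ --- so Assumption 1 (needed for the convergence $\lim_m\Phi_{\mathcal F}(V)=\pr(\mathbb{M}_\X>V)$, via tightness) is not merely ``delicate'' but likely false for such a realization. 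The honest proof must therefore go through your second route: run the argument directly on the random model, where all the relevant machinery (Lemma \ref{BOUNDRAND}, Theorem \ref{AsympG}, Proposition \ref{AsympLaplace}) is already stated probabilistically.

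The genuine gap is in how you execute that second route for the upper bound. Applying Chernoff to $\sum_{h}\frac{e(\alpha h)-1}{2\pi i h}\X(h)$ (or to its truncation at $|h|\le H$) for each discretized $\alpha$, with $\log\ex(e^{t\X})\le N|t|$ in the range where $|t|\cdot|\tfrac{e(\alpha h)-1}{2\pi h}|\ge 1$, gives
$\log\ex\le \frac{N|t|}{2\pi}\sum_{1\le |h|\lesssim t}\frac{|e(\alpha h)-1|}{|h|}+O(t)$, and the coefficient sum here is $\ge\frac{8}{\pi}\log t$ for typical $\alpha$ (and is only bounded by $4\log t$ crudely); optimizing then yields $\pr(\cdots>V)\le\exp(-\exp(cV))$ with $c=\pi^2/(4N)$ or at best $\pi/(2N)$ --- exactly the lossy constants from \cite{La} and from the complex-valued discussion at the end of Section 1.1, not $\pi/N$. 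The ingredient your sketch omits is Theorem \ref{AsympG}: the range $1\le|h|\le H$ must be bounded \emph{deterministically} by $\frac{N}{2\pi}\G(H)=\frac{N}{\pi}\log H+O(1)$, which is genuinely smaller than the term-by-term bound because the real and imaginary parts of $\frac{e(\alpha h)-1}{h}$ cannot be extremal simultaneously (this is the content of the Cauchy--Schwarz argument in Proposition \ref{FormulaOddG}). Only the tail $|h|>H$ is treated probabilistically, with threshold $O(1)$ rather than $V$, via the $2k$-th moment bound of Lemma \ref{BOUNDRAND} with $k\asymp H\asymp e^{\pi V/N}$ (or, equivalently, a Chernoff bound with $t\asymp H$ in the quadratic regime $|tc_h|\le 1$). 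Your lower-bound remark is fine: Proposition \ref{AsympLaplace} and the saddle-point analysis of Theorem \ref{LowerB} apply verbatim to the random variable $\sum_{h \text{ odd}}\frac{-1}{\pi h}\X(h)$, which bounds $\mathbb{M}_\X$ from below at $\alpha=1/2$.
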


\subsection{Examples of families of $\ell$-adic trace functions satisfying our assumptions}\label{sec : examp} We exhibit several examples of families of exponential sums that satisfy the assumptions in part C) of Theorem \ref{Main}, namely Assumptions 1, 2, 4, and Assumption 3 with $\eta=1/2$. These correspond to families of $\ell$-adic trace functions which satisfy several conditions, and notably that their arithmetic and geometric monodromy groups are both equal to $\Sp_{2r}(\mathbb{C})$, for a certain integer $r\ge 1$. We shall describe these families in details in section \ref{TraceSection}. In particular, we obtain the following applications of Theorem \ref{Main}. In all of these examples, $m=p$ is a large prime, and $\Omega_p= \Fs$ or $\Omega_p=\Fs\times\Fs.$ The first corollary concerns generalizations of Birch sums (\cite[$7.13$, $\Sp$-example $(2)$]{Kat90}).
\begin{cor}\label{GBirch} Let $g\in \mathbb{Z}[t]$ be an odd polynomial of degree $2r+1$, such that $r\ge 1$. Let $\mathcal{F}_1= \{\ph_a\}_{a\in \Fs}$ where $\ph_a(n)= e_p(an+g(n))$. There exists a constant $B_1$ such that for all real numbers $B_1\le V\leq (2r/\pi)(\log\log p-2\log\log\log p)-B_1$ we have 
$$  \Phi_{\mathcal{F}_1}(V) = \exp\left(- \exp\left(\frac{\pi}{2r}V+O(1)\right)\right).
$$
Here $B_1$ and the implied constant depend only on $r$.

\end{cor}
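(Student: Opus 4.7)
\textbf{Proof plan for Corollary~\ref{GBirch}.} I plan to deduce the corollary from Theorem~\ref{Main}(C) applied to the family $\mathcal{F}_1 = \{\ph_a\}_{a \in \Fs}$ with parameter $N = 2r$. Assumption 1 is immediate since $|\ph_a(n)| = 1$. For Assumption 2, the Fourier transform is
\[
\ta(h) = \frac{1}{\sqrt{p}} \sum_{n \in \F} e_p\bigl((a+h)n + g(n)\bigr),
\]
which is real by the substitution $n \mapsto -n$ combined with the oddness of $g$, and satisfies $|\ta(h)| \leq 2r$ by Weil's bound for a complete exponential sum attached to a polynomial of degree $2r+1$ (whose leading coefficient is coprime to $p$ once $p$ is large in terms of $g$). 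Hence Assumption 2 holds with $N = 2r$.

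The deep step is Assumption 3 with $\eta = 1/2$. The function $b \mapsto p^{-1/2}\sum_n e_p(bn + g(n))$ is the Frobenius trace function of Katz's $\Sp_{2r}$-example \cite[7.13]{Kat90}: a pure middle-extension $\ell$-adic sheaf $\mathcal{G}$ on $\mathbb{A}^1_{/\F}$ whose arithmetic and geometric monodromy groups are both equal to $\Sp_{2r}(\mathbb{C})$. Since $\ta(h)$ is the value at $a$ of the trace function of $[+h]^{*}\mathcal{G}$, the $k$-th joint moment $\frac{1}{p-1}\sum_a \ta(h_1)\cdots\ta(h_k)$ is an empirical average of the trace function of $\boxtimes_i [+h_i]^{*}\mathcal{G}$. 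For pairwise distinct $h_i$, Katz's disjointness lemmas combined with Goursat--Kolchin identify the geometric monodromy of this tensor product with $\Sp_{2r}^k$; coincidences are absorbed by replacing factors with appropriate symmetric powers of $\mathcal{G}$. Deligne's equidistribution theorem and the Riemann Hypothesis over finite fields then yield the moment identity with error $O(C_1^k/\sqrt{p})$, the limiting law being $\X = \Tr(U)$ with $U$ Haar-distributed in $\USp_{2r}$. Symmetry of $\X$ (hence 3b) comes from $-I \in \USp_{2r}$, while the Weyl integration formula gives a density for $\X$ that vanishes polynomially at $\pm 2r$, yielding 3a.

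The main obstacle is Assumption 4, which requires a controlled fourth moment of short sums uniformly over intervals of length up to $p^{1/2+\delta}$. I would apply the paper's general Lemma~\ref{BoundMomentShort} with $\alpha = 4$: expanding $|\cdot|^4$ and using orthogonality over $a \in \Fs$ reduces the estimate to a weighted count of quadruples $(n_1, n_2, n_3, n_4) \in I^4$ with $n_1 + n_2 \equiv n_3 + n_4 \pmod p$ (an integer equation when $|I| \leq p/3$), weighted by $e_p(g(n_1) + g(n_2) - g(n_3) - g(n_4))$. Diagonal tuples contribute $O(|I|^2/p^2)$, which is compatible with $p^{-1/2-\delta}$ for sufficiently small $\delta$; the off-diagonal tuples are handled by substituting $n_1 = n_3 + n_4 - n_2$, turning the phase into a polynomial in $n_2$ of degree $2r$ with leading coefficient $(2r+1)c_{2r+1}(n_3+n_4)$ (where $c_{2r+1}$ is the leading coefficient of $g$, and where we use that $g$ being odd has no $t^{2r}$ term), nonzero away from $n_3+n_4 \equiv 0 \pmod p$. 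The completion method combined with Weil's bound then supplies the required power saving, the locus $n_3+n_4 \equiv 0$ contributing only $O(|I|^2/p^2)$. Once all four assumptions are verified, Theorem~\ref{Main}(C) with $N = 2r$ delivers Corollary~\ref{GBirch}, with the constant $B_1$ depending only on $r$ through the implicit constants in Weil's bound and the sheaf-theoretic inputs.
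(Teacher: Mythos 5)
Your treatment of Assumptions 1, 2 and 3 is essentially the paper's: Assumption 1 is trivial, Assumption 2 follows from oddness of $g$ (realness via $n\mapsto -n$) and Weil's bound with $N=2r$, and Assumption 3 with $\eta=1/2$ comes from Katz's $\Sp_{2r}$-example \cite[7.13, $\Sp$-example (2)]{Kat90}, Goursat--Kolchin disjointness and Deligne's equidistribution theorem, with 3a and 3b checked for traces on $\USp_{2r}$ exactly as in Lemma \ref{traceUSp}. (The paper packages this through the notion of a $\lambda$-parameter family of $\Sp_{2r}$-type and Perret-Gentil's results, but the substance is the same.)

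The genuine gap is in your verification of Assumption 4. With $\alpha=4$, orthogonality over the single parameter $a\in\Fs$ yields only one linear relation $n_1+n_2\equiv n_3+n_4$, and your off-diagonal estimate then reads
\[
\frac{1}{p^2}\sum_{n_3,n_4\in I}\Bigl|\sum_{n_2}e_p\bigl(g(n_3+n_4-n_2)+g(n_2)-g(n_3)-g(n_4)\bigr)\Bigr|
\ll \frac{|I|^2\sqrt{p}\log p}{p^{2}},
\]
which at $|I|=p^{1/2+\delta}$ is $p^{-1/2+2\delta}\log p$. This exceeds the required $p^{-1/2-\delta}$ for every $\delta>0$ (it already fails by a logarithm at $|I|=\sqrt p$), so the fourth moment cannot deliver Assumption 4 for this \emph{one}-parameter family. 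The moment-plus-orthogonality strategy is what the paper uses for the \emph{two}-parameter families (Corollaries \ref{GKloo} and \ref{HypKloo}), where two orthogonality relations reduce the count to $O(|I|^2)$ genuinely diagonal tuples and give $|I|^2p^{-2}\ll p^{-1/2-\delta}$. For the family $\{e_p(an+g(n))\}_a$ the paper argues differently: since the phase is an explicit polynomial of degree $2r+1\ge 3$, Weyl differencing (\cite[Lemma 20.3]{IK}) gives a \emph{pointwise} bound $|p^{-1/2}\sum_{n\in I}e_p(a n+g(n))|\ll p^{-\eta}$, uniformly in $a$, for all $|I|\le p^{1/2+\eta}$; one then simply raises this to a power $\alpha$ large enough that $\alpha\eta\ge 1/2+\delta$. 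This exploits the freedom in Assumption 4 to take $\alpha$ arbitrarily large, which your argument, locked at $\alpha=4$, does not. Replacing your fourth-moment step by the Weyl-differencing bound repairs the proof.
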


The next application concerns generalizations of the classical Kloosterman sums (\cite[$7.12.3.1$]{Kat90}). 

\begin{cor}\label{GKloo} 
Let $r\ge 1$ be an odd integer, and $\mathcal{F}_2= \{\ph_{(a,b)}\}_{(a, b)\in \Fs\times\Fs}$ where $\ph_{(a, b)}(n)= e_{p}(bn+(a\overline{n})^{r}).$
There exists a constant $B_2$ such that for all real numbers $B_2\leq V\leq ((r+1)/\pi)(\log\log p-2\log\log\log p)-B_2$ we have 
$$  \Phi_{\mathcal{F}_2}(V) = \exp\left(- \exp\left(\frac{\pi}{r+1}V+O(1)\right)\right).
$$

\end{cor}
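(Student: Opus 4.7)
The plan is to apply case C) of Theorem~\ref{Main} to the family $\mathcal{F}_2$ with $m=p$, $\Omega_p=\Fs\times\Fs$ and $N=r+1$. Accordingly, one needs to verify Assumptions 1, 2, 4 and Assumption 3 with $\eta=1/2$. Assumption 1 is immediate since $|\ph_{(a,b)}(n)|\leq 1$. For Assumption 2, the substitution $v=a\overline{n}$ rewrites
\[
\widehat{\ph_{(a,b)}}(h)=\frac{1}{\sqrt{p}}\sum_{v\in\Fs}e_p\bigl(v^r+(h+b)a\,\overline{v}\bigr),
\]
a sum of the type investigated by Katz in \cite{Kat88,Kat90}. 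Since $r$ is odd, the associated $\ell$-adic sheaf (viewed as a function of the parameter $(h+b)a$) has rank $r+1$, and both its geometric and arithmetic monodromy groups equal $\Sp_{r+1}(\mathbb{C})$. Deligne's Weil bound then yields $\widehat{\ph_{(a,b)}}(h)\in[-(r+1),r+1]$, confirming Assumption 2 with $N=r+1$.

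For Assumption 3, I would let $\X$ be distributed as $\Tr(g)$ for $g$ Haar-random in the maximal compact subgroup $\USp_{r+1}\subset\Sp_{r+1}(\mathbb{C})$. The joint-moment identity follows from Deligne's equidistribution theorem applied to the $k$-fold external tensor product of the above sheaf twisted by the shifts $h_1,\dots,h_k$; the error term of size $O(C_1^k/\sqrt{p})$ comes directly from Deligne's Weil bound, giving $\eta=1/2$. Condition~3a follows from the explicit Weyl integration formula on $\USp_{r+1}$, which pushes forward Haar measure to a density on $[-(r+1),r+1]$ that is polynomial near the endpoints. Condition~3b holds because $-I\in\USp_{r+1}$ (as $r+1$ is even), so $\X$ and $-\X$ are equidistributed and all odd moments vanish.

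The main obstacle is Assumption 4. My plan is to establish an even-moment bound, say
\[
\frac{1}{(p-1)^2}\sum_{a,b\in\Fs}\Bigl|\frac{1}{\sqrt{p}}\sum_{n\in I}\ph_{(a,b)}(n)\Bigr|^4\ll p^{-1/2-\delta}
\]
for a sufficiently small $\delta<1/2$ and any interval $I$ with $|I|\leq p^{1/2+\delta}$. Expanding the fourth power and summing first over $b\in\Fs$ via Dirichlet orthogonality restricts the quadruple $(n_1,n_2,n_3,n_4)\in I^4$ to satisfy $n_1-n_2+n_3-n_4\equiv 0\pmod p$, which under the size condition on $I$ forces the actual integer equality $n_1-n_2+n_3-n_4=0$. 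The remaining sum over $a\in\Fs$ is of the shape $\sum_a e_p\bigl(a^r F(n_1,n_2,n_3,n_4)\bigr)$ for an explicit rational function $F$ in the $n_i$; by Weil's bound it is $O(\sqrt{p})$ whenever $F\not\equiv 0$, and a count of the ``diagonal'' quadruples where $F$ vanishes identically produces the desired saving. Once the four assumptions have been verified, Theorem~\ref{Main} directly yields the claimed estimate for $\Phi_{\mathcal{F}_2}(V)$.
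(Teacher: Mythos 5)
Your overall strategy coincides with the paper's: verify Assumptions 1, 2, 4 and Assumption 3 with $\eta=1/2$ for the family $\ph_{(a,b)}(n)=e_p(bn+(a\overline{n})^{r})$ with $N=r+1$, then invoke case C) of Theorem \ref{Main}. (The paper packages this by showing that $\{\mathcal{L}_{e_{p}(bT+(a\overline{T})^{r})}\}$ is a $2$-parameter family of $\Sp_{r+1}$-type in the sense of Definition \ref{def : 2par}.) Your handling of Assumptions 1, 2 and of conditions 3a, 3b matches the paper. For Assumption 4 you take a slightly different but workable route: the paper first replaces $(a\overline{n})^{r}=a^{r}\overline{n}^{r}$ by a phase that is linear in a new parameter, applies orthogonality in both variables, and recovers the original family by positivity (at the cost of a factor $(r,p-1)$), reducing everything to counting solutions of $n_{1}+n_{2}=m_{1}+m_{2}$, $\overline{n}_{1}^{\,r}+\overline{n}_{2}^{\,r}=\overline{m}_{1}^{\,r}+\overline{m}_{2}^{\,r}$; you keep $a^{r}$ and use Weil on the off-diagonal, which loses a $\sqrt{p}$ but still yields an admissible $\delta$. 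Be aware that the diagonal count you defer to is not $O_r(|I|)$: the pairs with $n_{1}=-n_{2}$ force only $m_{1}=-m_{2}$ and hence contribute about $|I|$ solutions each, so the diagonal is $O_{r}(|I|^{2})$, and this is what limits the admissible size of $|I|$.

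The genuine gap is in Assumption 3. Saying that the joint-moment identity ``follows from Deligne's equidistribution theorem applied to the $k$-fold external tensor product'' presupposes that the geometric monodromy group of $\bigotimes_{i}[+h_{i}]^{*}\mathcal{K}$ is the full product $\prod_{i}\Sp_{r+1}(\mathbb{C})$ --- this is exactly what makes the $\X(h)$ independent across distinct shifts. By the Goursat--Kolchin--Ribet criterion one must show that for $z_{1}\neq z_{2}$ one has $[+z_{1}]^{*}\mathcal{K}\neq_{\mathrm{geom}}[+z_{2}]^{*}\mathcal{K}\otimes\mathcal{L}$ for every rank-one sheaf $\mathcal{L}$ (property $(v)$ of Definition \ref{def : 2par}). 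For this particular family there is no off-the-shelf reference, and this verification is the technical heart of the paper's proof: it uses Laumon's local Fourier theory (the transformed sheaf has unique slope $r/(r+1)$ at infinity and is tame at $0$), realizes the hypothetical isomorphism as a cohomological transform, and derives a contradiction by comparing ranks via the Grothendieck--Ogg--Shafarevich formula. As written, your proposal treats this independence as automatic and therefore does not justify the factorization of the joint moments. A secondary omission of the same kind: the error term in Assumption 3 must be $O(C_{1}^{k}/\sqrt{p})$ with the constant only exponential in $k$, which requires the conductor bound $c(\mathcal{H})\ll C^{8k}$ for the tensor-product sheaf; this bookkeeping also needs to be carried out.
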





Finally our last application concerns additive twists of hyper-Kloosterman sums. Recall that for an integer $r\ge 2$, the $r$-th hyper-Kloosterman sum on $\F$ is defined for $n\in \Fs$ by 
$$\Kl_{r}(n;p)=\frac{(-1)^{r-1}}{p^{(r-1)/2}}\sum_{\substack{y_{1},...,y_{r}\in\mathbb{F}_{p}^{\times}\\y_{1}\cdot...\cdot y_{r}=n}}e_p\left(y_{1}+\cdots +y_{r}\right).$$

\begin{cor}\label{HypKloo}
Let $r\ge 3$ be an odd integer, and  
$\mathcal{F}_4= \{\ph_{(a,b)}\}_{(a, b)\in \Fs\times\Fs}$ where $\ph_{a, b}(n)= \Kl_{r}(\overline{an};p)e_{p}(bn)$.
There exists a constant $B_3$ such that for all real numbers $B_3\leq V\leq ((r+1)/\pi)(\log\log p-2\log\log\log p)-B_3$ we have 
$$  \Phi_{\mathcal{F}_4}(V) = \exp\left(- \exp\left(\frac{\pi}{r+1}V+O(1)\right)\right).
$$

\end{cor}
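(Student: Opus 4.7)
The plan is to verify the four assumptions of Theorem~\ref{Main}(C) for the family $\mathcal{F}_4=\{\ph_{(a,b)}\}_{(a,b)\in\Fs\times\Fs}$, which then yields \eqref{MainEstimate} with $N=r+1$. Assumption~1 is immediate from Deligne's bound $|\Kl_r(n;p)|\leq r$ for $n\in\Fs$.

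The central computation is that of the Fourier transform. After the change of variable $m=\overline{an}$, expanding $\Kl_r(m;p)$ via its definition as a normalized sum over factorizations $y_1\cdots y_r=m$, and introducing an auxiliary variable $y_{r+1}$, one obtains the identity
\[
\sum_{m\in\Fs}\Kl_r(m;p)\,e_p(c\,\overline m)= -\sqrt{p}\,\Kl_{r+1}(c;p)\qquad(c\in\Fs),
\]
by recognizing the resulting sum as a rank-$(r+1)$ hyper-Kloosterman sum at $c$. This yields
\[
\widehat{\ph_{(a,b)}}(h)=-\Kl_{r+1}\!\bigl(\overline a(b+h);p\bigr).
\]
Since $r$ is odd, $r+1$ is even, and by Katz's work the associated Kloosterman sheaf is geometrically self-dual of symplectic type with geometric monodromy group $\Sp_{r+1}(\mathbb{C})$. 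Consequently $\Kl_{r+1}(\cdot;p)$ is real-valued and bounded by $r+1$ in absolute value, establishing Assumption~2 with $N=r+1$.

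For Assumption~3, the joint moments become, up to a sign, an average over $(a,b)\in\Fs\times\Fs$ of products $\prod_i\Kl_{r+1}(\overline a(b+h_i);p)$. Setting $c=\overline a b$ converts this into an average over $a\in\Fs$ of sums $\sum_{c\in\Fs}\prod_i\Kl_{r+1}(c+\overline a h_i;p)$; each inner sum is a Frobenius trace on the cohomology of a tensor product of additive translates of the Kloosterman sheaf. Grouping the $h_i$ by their distinct values, the $\Sp_{r+1}$ geometric monodromy together with Deligne's equidistribution theorem matches the main term to the corresponding joint moment of independent Sato-Tate $\USp(r+1)$ traces, with error $O(C_1^k/\sqrt p)$ coming from the Riemann hypothesis over finite fields. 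The Sato-Tate measure for $\USp(r+1)$ is supported on $[-(r+1),r+1]$ with a continuous positive density (so Assumption~3a holds), and is symmetric about the origin since symplectic eigenvalues come in inverse pairs (so Assumption~3b holds). Thus Assumption~3 is valid with $\eta=1/2$.

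The main obstacle is Assumption~4. The plan is to apply the general Lemma~\ref{BoundMomentShort}: using discrete Plancherel \eqref{Planche}--\eqref{Planche2}, the short sum $\sum_{n\in I}\ph_{(a,b)}(n)$ expands as a linear combination of $\widehat{\ph_{(a,b)}}(h)=-\Kl_{r+1}(\overline a(b+h);p)$ with Fourier coefficients of size $O(1/|h|)$. Raising to the $\alpha$-th power and averaging over $(a,b)$ then reduces Assumption~4 to controlling correlations of products of additively shifted rank-$(r+1)$ hyper-Kloosterman sums, which is precisely the input that Lemma~\ref{BoundMomentShort} is designed to process. The delicate step is to check the cohomological hypotheses of that lemma in the present hyper-Kloosterman setting; I expect this to follow from Katz's computation of the geometric monodromy of $\Kl_{r+1}$ together with the geometric irreducibility of suitable tensor products of additive translates of the associated sheaf. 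Once Assumption~4 is established, Theorem~\ref{Main}(C) directly yields Corollary~\ref{HypKloo} with the asserted constant $(r+1)/\pi$.
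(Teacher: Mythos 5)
Your verification of Assumptions 1--3 follows the paper's route: the identity $\widehat{\ph_{(a,b)}}(h)=-\Kl_{r+1}(\overline a(b+h);p)$, the symplectic monodromy $\Sp_{r+1}(\mathbb{C})$ of $\mathcal{K}\ell_{r+1}$, and Deligne equidistribution for the joint moments are exactly the ingredients used in Section \ref{TraceSection}. Two points there are stated too loosely but are repairable: for Assumption 3 you need not just the monodromy of a single Kloosterman sheaf but the geometric non-isomorphism of its distinct additive translates (up to rank-one twists), which is what lets the shifts $\overline a h_1,\dots,\overline a h_k$ behave independently --- the paper imports this from Fouvry--Kowalski--Michel (property $(v)$ of Definition \ref{def : 2par}); and for Assumption 3a the density of $\Tr$ on $\USp_{r+1}$ is \emph{not} bounded below near the endpoints $\pm(r+1)$ --- it vanishes there to high order, and the required bound $\pr(\X>N-\varepsilon)\gg\varepsilon^A$ is the content of the quantitative Lemma \ref{traceUSp}, proved via the Weyl integration formula.

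The genuine gap is Assumption 4, and your proposed route for it is circular: Lemma \ref{BoundMomentShort} \emph{assumes} Assumptions 1 and 4 as hypotheses (it has no cohomological hypotheses to "check"), so it cannot be used to establish Assumption 4. Moreover the underlying idea --- expand the short sum by Plancherel and control moments of the Fourier coefficients via Assumption-3-type correlations --- cannot deliver the required power saving: each $k$-tuple of frequencies carries an error $O(C_1^k/\sqrt p)$ (one cannot do better than square-root cancellation over the $p^2$ parameters), and summing these errors against the Fourier coefficients of an interval of length $p^{1/2+\delta}$ produces a term of size about $p^{4\delta-1/2}$ in the fourth moment, which exceeds the target $p^{-1/2-\delta}$. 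The paper instead proves Assumption 4 directly in physical space: opening the fourth moment $M_4$ of $\frac{1}{\sqrt p}\sum_{n\in I}\Kl_r(\overline{an};p)e_p(bn)$, the exact orthogonality of $e_p(b\,\cdot)$ over $b\in\F$ forces $n_1+n_2=m_1+m_2$ (collapsing $|I|^4$ tuples to $\ll|I|^3$), and the remaining average over $a$ of $\prod\Kl_r(\overline{an_i};p)\Kl_r(-\overline{am_j};p)$ is $O_r(\sqrt p)$ off a diagonal of $\ll|I|^2+|I|^3/\sqrt p$ tuples by the sums-of-products theorem \cite[Corollary 3.3]{FKM15b}. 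This gives $M_4\ll_r|I|^2p^{-2}+|I|^3p^{-5/2}$, hence Assumption 4 with $\alpha=4$ and $|I|\le p^{1/2+1/8}$. Without this (or an equivalent) mechanism your argument does not reach case C) of Theorem \ref{Main}.
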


Our method also works in the case where the Fourier transforms $\widehat{\ph_a}$ are complex valued, but yields weaker estimates for $\Phi_{\mathcal{F}}$ in this case. This corresponds for example to certain families of $\ell$-adic trace functions whose monodromy group is $\textup{SL}_N(\mathbb{C})$ for some integer $N\geq 3$ (since in the case $N=2$ we have $\textup{SL}_2(\mathbb{C})=\textup{Sp}_2(\mathbb{C})$). In this case we need to change Assumption 3 to include all mixed moments of $\widehat{\ph_a}(h_1), \dots, \widehat{\ph_a}(h_k)$ and their complex conjugates. We also assume that the $\{\X(h)\}_{h\in \mathbb{Z}^*}$ are supported inside the disk $\{z\in \mathbb{C}: |z|\leq N\}$, and replace  $\X$ by $\re\X$ in Assumption 3a.
Then, given a family $\mathcal{F}=\{\ph_a\}_{a\in \Omega_m}$ of $m$-periodic complex valued functions such that $|\ta(h)|\leq N$, and $\mathcal{F}$ verifies (the new) Assumption 3 with $\eta>1$, or Assumptions 1 and 3 with $1/2<\eta\le 1$, or Assumptions 1, 4, and 3 with $\eta=1/2$, we can prove that in the range  $B\le V\leq (N/\pi)(\log\log m-2\log\log\log m)-B$ we have  \footnote{In the case of families of trace functions whose arithmetic and geometric monodromy groups are both equal to $\textup{SL}_N(\mathbb{C})$ for some odd integer $N\ge 3$, the constant $\pi/N$ should be replaced by the larger constant $2\pi/(N(1+\cos(\pi/N)))$ in the lower bound for $\Phi_{\mathcal F}(V)$, since the condition $\pr(\re\X<-N+\varepsilon)\gg \varepsilon^A$ in Assumption 3a is not satisfied in this case.}
$$  \exp\left(- \exp\left(\frac{\pi}{N}V+O(1)\right)\right) \leq \Phi_{\mathcal{F}}(V) \leq \exp\left(-\exp\left(\frac{\pi^2}{4N}V+O(1)\right)\right).
$$

The plan of the paper is as follows. In the next section we present the key ingredients of the proof of the upper bound of Theorem \ref{Main}, and show how to deduce this upper bound in each of the cases A), B) and C) assuming these results. In section \ref{ShortSec} we prove Theorem \ref{ReduceMaxPoints} below, which shows that for almost all $m$-periodic functions in our families, the maximum of all partial sums is very close to the maximum of a ``small number" of these sums. Section \ref{AsympGSec} will be devoted to the proof of Theorem \ref{AsympG} below, which provides a precise asymptotic formula for the maximum of a certain ``random'' exponential sum.
In section \ref{ProbabilitySec}, we collect several results on the probabilistic random model.  Section \ref{MomentsSec} contains the proof of Theorem \ref{MomentsMaxTail} below, which is the last ingredient of the proof of the upper bound of Theorem \ref{Main}. In section \ref{LowerSec}, we prove Theorem \ref{Lower}. Section \ref{HugeSec} contains the construction of the family $\mathcal{F}$ which satisfies Proposition \ref{HugeFamily}. Finally, in section \ref{TraceSection}, we exhibit examples of families of $\ell$-adic trace functions satisfying our assumptions, and prove Corollaries \ref{GBirch}, \ref{GKloo} and \ref{HypKloo}.

\textbf{Acknowledgements}. 
We would like to thank the anonymous referees for carefully reading the paper and for their remarks and suggestions.


\section{Proof of the upper bound in Theorem \ref{Main}: Main ideas and key ingredients\label{sectionKey}}

Let $\{\ph_a\}_{a\in \Omega_m}$ be a family of $m$-periodic complex valued functions satisfying Assumptions 2 and 3. Recall that 
$$ \M(\ph_a) =\max_{0\leq x<m}\left|\sum_{0\leq n\le x}\ph_a(n)\right|.$$
Using the discrete Plancherel formula \eqref{Planche} and the estimate \eqref{Planche2} we obtain
$$
 \frac{\M(\ph_a)}{\sqrt{m}}= \frac{1}{2\pi} \max_{0\leq j\le m-1}\Big|\sum_{1\leq |h|<m/2} \frac{e_m\left(jh\right)-1}{ h} \ta(h)\Big|+O\left(1\right).
$$
Similarly as in \cite{La}, we shall treat the Fourier transforms $\ta(h)$ for small $h$ as \emph{random} values in $[-N, N]$. This yields 
 \begin{equation}\label{UpperReducePoints}
\frac{\M(\ph_a)}{\sqrt{m}}\leq  
\frac{N}{2\pi}\mathcal{G}(H) + \frac{1}{2\pi} \max_{0\leq j\le m-1} \left|\sum_{H<|h|<m/2} \frac{e_m\left(j h\right)- 1}{ h} \ta(h)\right| +O(1),
\end{equation}
where $H$ is a positive integer and
$$\mathcal{G}(H):=\max_{\alpha\in [0, 1)} \max_{(y_{-H}, \dots y_{-1}, y_1,\dots,  y_{H})\in [-1, 1]^{2H}}
 \left|\sum_{1\leq |h|\leq H} \frac{e(\alpha h)-1}{h} y_h\right|.
 $$
 One has the trivial bounds 
\begin{equation}\label{BoundsG}
2\log H +O(1) \leq \mathcal{G}(H) \leq 4\log H +O(1),
\end{equation}
where the upper bound follows from the trivial inequality $|e(\alpha h)-1|\leq 2$, and the lower bound follows by taking $\alpha=1/2$, $y_h=-1$ if
$h>0$ and $y_h=1$ if $h<0$. Using Fourier analytic techniques, the third author showed in \cite{La} that 
$$\G(H)\leq \left(1+\frac{4}{\pi}\right) \log H+O(1),$$
 and conjectured that the lower bound of \eqref{BoundsG} is closer to the true order of magnitude of $\G(H)$. In section \ref{AsympGSec} we shall prove a stronger form of this conjecture.
\begin{thm}\label{AsympG}
Let $H$ be a positive integer. Then, we have 
$$\G(H)=2\log H+2\log 2+2\gamma+O\left(\frac{1}{H}\right).$$
\end{thm}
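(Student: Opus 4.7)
My plan is to prove matching bounds of $2\log H+2\gamma+2\log 2+O(1/H)$ from both sides.

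For the lower bound, I would specialize $\alpha=1/2$: then $(e(h/2)-1)/h$ equals $-2/h$ on odd $h$ and $0$ on even $h$, so choosing $y_h=-\mathrm{sign}(h)$ for odd $h$ (and anything for even) collapses the inner sum to $4\sum_{k\le H,\,k\text{ odd}}1/k$. The standard asymptotic $\sum_{k\le H,\,k\text{ odd}}1/k=\tfrac{1}{2}(\log H+\gamma+\log 2)+O(1/H)$ then delivers the desired lower bound.

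For the upper bound I would first reduce the inner maximum over $y_h\in[-1,1]$ to a phase optimization via $\max_y|\sum_h c_hy_h|=\max_\theta\sum_h|\mathrm{Re}(e^{-i\theta}c_h)|$. Writing $c_h=(2i/h)\sin(\pi\alpha h)e^{i\pi\alpha h}$, combining the contributions from $\pm h$, and using the identity $|\sin(A-\theta)|+|\sin(A+\theta)|=2\max(|\sin A\cos\theta|,|\cos A\sin\theta|)$, one obtains
$$
\tfrac{1}{2}\,\G(H)=\max_{\alpha,\theta}\sum_{k=1}^{H}\frac{\max\bigl((1-\cos(2\pi k\alpha))|\cos\theta|,\,|\sin(2\pi k\alpha)||\sin\theta|\bigr)}{k}.
$$
Next, apply $\max(A_k,B_k)\le\sqrt{A_k^2+B_k^2}$ pointwise. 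The algebraic identity $A_k^2+B_k^2=(1-\cos_k)(1-\cos(2\theta)\cos_k)$ (with $\cos_k:=\cos(2\pi k\alpha)$), followed by Cauchy--Schwarz, gives
$$
\sum_{k}\frac{\sqrt{A_k^2+B_k^2}}{k}\le\sqrt{J_2(\alpha)\bigl(L+\cos(2\theta)\,M(\alpha)\bigr)},
$$
where $L=\log H+\gamma$, $M(\alpha)=\log|2\sin(\pi\alpha)|$, and $J_2(\alpha)=L+M(\alpha)$ by the Mertens-type estimate $\sum_{k\le H}\cos(2\pi k\alpha)/k=-M(\alpha)+O(1/(H\|\alpha\|))$. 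Optimizing the right-hand side over $\cos(2\theta)\in[-1,1]$ returns $L+M(\alpha)$ when $M(\alpha)\ge 0$ and $\sqrt{L^2-M(\alpha)^2}<L$ otherwise; in either case this is $\le L+\log 2$, with equality iff $\alpha=1/2$.

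The chain is sharp because both the pointwise inequality $\max\le\sqrt{\cdot}$ and Cauchy--Schwarz are simultaneously tight at the extremum $(\alpha,\theta)=(1/2,0)$: the former because $A_kB_k=0$ for every $k$ there (one of the two factors vanishes term by term), and the latter because the two sequences $\sqrt{(1-\cos_k)/k}$ and $\sqrt{(1-\cos(2\theta)\cos_k)/k}$ coincide when $\alpha=1/2$, $\cos(2\theta)=1$. Consequently no constants are lost and we obtain $\G(H)\le 2L+2\log 2+O(1/H)$, matching the lower bound. The main obstacle will be controlling the Mertens remainder $O(1/(H\|\alpha\|))$ uniformly in $\alpha$, since it degrades as $\alpha\to 0,1$; I would handle this by a case split---near $\alpha=1/2$ one has $\|\alpha\|\asymp 1$, yielding the desired $O(1/H)$, while far from $\alpha=1/2$ the quantity $L+M(\alpha)$ sits strictly below $L+\log 2$ by a margin more than sufficient to absorb a looser remainder---with the two regimes matched at a cutoff like $|\alpha-1/2|\asymp H^{-1/4}$.
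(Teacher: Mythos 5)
Your proposal is correct, and the upper bound goes by a genuinely different route than the paper's. The paper proves an \emph{exact} formula, $\G(H)=2\sum_{h\le H}(1-(-1)^h)/h$ for odd $H$ (Proposition \ref{FormulaOddG}), by splitting $S$ into real and imaginary parts, applying Cauchy--Schwarz to $\im(S)$, exploiting the constraint $|x_h-x_{-h}|+|x_h+x_{-h}|\le2$, and reducing everything to the sharp finite inequality $\sum_{h\le H}\sin^2(\pi\alpha h)/h\le\sum_{h\le H}(1-(-1)^h)/(2h)$ (Lemma \ref{LemmaUpperSin}), which is proved by a monotonicity argument for an auxiliary function; even $H$ is then handled by sandwiching $\G(H-1)\le\G(H)\le\G(H+1)$. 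You instead dualize over $(y_h)$ first to turn the problem into a phase optimization, combine the $\pm h$ terms via the identity $|\sin(A-\theta)|+|\sin(A+\theta)|=2\max(|\sin A\cos\theta|,|\cos A\sin\theta|)$, and then run $\max\le\sqrt{A^2+B^2}$, Cauchy--Schwarz, and the asymptotic evaluation $\sum_{k\le H}\cos(2\pi k\alpha)/k=-\log|2\sin\pi\alpha|+O(1/(H\|\alpha\|))$; your verification that both inequalities are simultaneously tight at $(\alpha,\theta)=(1/2,0)$ correctly explains why no constant is lost. What the paper's route buys is a clean, fully finite identity with no uniformity issues in $\alpha$ (and, per their remark, an exact formula for even $H$ as well); what your route buys is avoiding the parity trick and the somewhat ad hoc auxiliary function $g$, at the cost of the non-uniform Mertens remainder near $\alpha\in\{0,1\}$. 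Your proposed case split is sound, but do note that the degenerate region is $\|\alpha\|\to0$ rather than $\alpha\to1/2$: for $\|\alpha\|\ll1/H$ the remainder $O(1/(H\|\alpha\|))$ is useless and you must fall back on the trivial bound $1-\cos(2\pi k\alpha)\ll k^2\|\alpha\|^2$ to see that $J_2(\alpha)$ is small there; once that regime is treated separately, the margin $L+\log2-(L+M(\alpha))\asymp\|\alpha-1/2\|^2$ indeed absorbs the $O(1/H)$ remainder away from $\alpha=1/2$ exactly as you describe.
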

In order to prove the upper bound in Theorem \ref{Main}, it remains to show that for large $H$, and for ``most'' $a\in \Omega_m$, the maximum of the sum $|\sum_{H< |h|< m/2} \frac{e_m\left(j h\right)- 1}{ h} \ta(h)|$ is ``small''. To this end we prove the following result in section \ref{MomentsSec}.

\begin{thm}\label{MomentsMaxTail}
Let $m$ be large, and $k$ be an integer such that $10^5N^2<k\leq (\log m)/(50\log\log m)$. Let $\{\ph_a\}_{a\in \Omega_m}$ be a family of $m$-periodic complex valued functions satisfying Assumptions 2 and 3. Let $S$ be a non-empty subset of $[0, 1)$, and put $y=10^5 N^2 k$. Then we have
$$ 
\frac{1}{|\Omega_m|}\sum_{a\in \Omega_m} \max_{\alpha\in S}\left|\sum_{y\le |h|<m/2} \frac{e\left(\alpha h\right)- 1}{h} \ta(h)\right|^{2k} \ll e^{-2k} + \frac{|S|(4C_1\log m)^{8k}}{m^{\eta}}.
$$
\end{thm}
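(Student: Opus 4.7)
The plan is to use the moment method combined with Assumption 3 to pass to a fully independent random model, where the pairing structure imposed by the vanishing of odd moments (Assumption 3b) produces a sharp Gaussian-type main term. We set $T_a(\alpha) := \sum_{y \le |h| < m/2} c_h(\alpha) \widehat{\ph_a}(h)$ with $c_h(\alpha) := (e(\alpha h) - 1)/h$. We would begin from the elementary inequality $\max_{\alpha \in S} |T_a(\alpha)|^{2k} \le \sum_{\alpha \in S} |T_a(\alpha)|^{2k}$; after exchanging the sums over $\alpha$ and $a$, it will suffice to control the $a$-average of $\sum_{\alpha \in S}|T_a(\alpha)|^{2k}$.

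For each fixed $\alpha$, we would expand $|T_a(\alpha)|^{2k} = T_a(\alpha)^k \overline{T_a(\alpha)}^k$ as a sum over indices $(h_1,\dots,h_k,h'_1,\dots,h'_k)$ with coefficients $\prod_i c_{h_i}(\alpha)\overline{c_{h'_j}(\alpha)}$, and invoke Assumption 3 (applicable since $8k \le \log m/\log\log m$ in the range $k \le \log m/(50\log\log m)$) to replace the $a$-average of $\prod_i\widehat{\ph_a}(h_i)\widehat{\ph_a}(h'_i)$ by $\ex\bigl[\prod_i\X(h_i)\X(h'_i)\bigr] + O(C_1^{2k}/m^{\eta})$. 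In the random model, the independence of the $\X(h)$ together with Assumption 3b forces the non-zero contributions to come from configurations in which each distinct $h$-value appears with even multiplicity. A standard perfect-matching count would then yield
\[\ex\bigl[|T_{\X}(\alpha)|^{2k}\bigr] \le (2k-1)!! \cdot N^{2k}\Big(\sum_{y \le |h|<m/2}|c_h(\alpha)|^2\Big)^k \ll \Big(\frac{16N^2k}{ey}\Big)^k,\]
using $|e(\alpha h)-1|\le 2$, $\sum_{|h|\ge y}h^{-2}\le 2/y$, and Stirling's $(2k-1)!! \ll (2k/e)^k$. With $y = 10^5 N^2 k$, this per-$\alpha$ random-model bound is $\ll e^{-9k}$.

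The Assumption 3 error per configuration is $O(C_1^{2k}/m^{\eta})$, and summing the absolute values of the coefficients over the expansion would contribute a factor $(\sum_{|h|\ge y}|c_h(\alpha)|)^{2k}\ll (2\log m)^{2k}$ per $\alpha$; the exponent $8k$ in the stated error is expected to arise by re-running the argument at the $8k$-th moment level via $\max_\alpha|T_a(\alpha)|^{2k} \le (\max_\alpha|T_a(\alpha)|^{8k})^{1/4}$ followed by Jensen's inequality. The hard part will be showing that the factor $|S|$ arising from the trivial union bound over $\alpha$ does not appear in the main term: this is the heart of the argument. The plan is to exploit the discrete-orthogonality structure of the points $\alpha = j/m$, using $\sum_{j=0}^{m-1}e_m(jX) = m\mathbf{1}[X\equiv 0\!\pmod{m}]$ after expanding each $c_h(\alpha)$ as $e(\alpha h)/h - 1/h$; the resulting congruence $\sum \varepsilon_i h_i \equiv \sum \varepsilon'_j h'_j\pmod{m}$ should restrict the paired configurations to a $1/m$-fraction of the unconstrained count, which will exactly cancel the factor $|S|\le m$ arising from the orthogonality step and yield the claimed $|S|$-independent main term $\ll e^{-2k}$. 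The main technical obstacle is the combinatorial bookkeeping needed to implement this cancellation uniformly over the $2^{2k}$ sign choices $(\vec\varepsilon,\vec\varepsilon')$, and in particular to control the contribution from non-perfect-matching configurations (where some index has multiplicity exceeding two) by comparing them absolutely to the perfect-matching bounds via the uniform estimate $|\X|\le N$.
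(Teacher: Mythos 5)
Your opening steps (expanding the $2k$-th moment, invoking Assumption 3 with $2k\le \log m/\log\log m$ factors to pass to the random model, and using Assumption 3b plus a matching count to get a per-$\alpha$ bound of order $(CN^2k/y)^k\ll e^{-7k}$) coincide with the paper's Lemma 6.1 and are sound. The fatal problem is the step you yourself flag as the heart of the argument: removing the factor $|S|$ from the main term. The orthogonality mechanism you propose cannot work. When you expand $\prod_i c_{h_i}(\alpha)\overline{c_{h'_i}(\alpha)}$ into the $2^{2k}$ terms indexed by $(\vec\varepsilon,\vec\varepsilon')$ and sum over $\alpha=j/m$, the congruence $\sum_i\varepsilon_i h_i\equiv\sum_i\varepsilon'_i h'_i\pmod m$ is \emph{automatically satisfied} by precisely the configurations that produce the main term: for a paired configuration $h_i=h'_{\sigma(i)}$ with matching signs the two sides are identical, and the term with $\vec\varepsilon=\vec\varepsilon'=0$ (every factor contributing $-1/h$) satisfies $0\equiv0$ with no restriction on the $h$'s at all. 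Already for $k=1$ the diagonal contribution gives $\sum_{j=0}^{m-1}|T_a(j/m)|^2\ge m\sum_{y\le|h|<m/2}\ta(h)^2/h^2$, which after averaging over $a$ is $\asymp m\,\ex(\X^2)/y$, i.e.\ the full factor $m$ times the per-$\alpha$ second moment; orthogonality thins only the off-diagonal configurations, which were already negligible. In addition, the theorem is stated for an arbitrary finite $S\subset[0,1)$ and is applied in case C) with $S=\{x_j/m: 0\le j<J\}$, $J\approx m^{1/2-\delta/5}$, so no complete-residue-system orthogonality is even available.

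The device that actually works, and is what the paper uses, is a discretization (chaining) argument adapted to the frequency range, not orthogonality. For frequencies restricted to $|h|<Z$, the map $\alpha\mapsto\sum_{|h|<Z}c_h(\alpha)\ta(h)$ moves by $O(Z^2/M)$ when $\alpha$ moves by $1/M$, so the maximum over all of $S$ equals the maximum over a net of $M$ equally spaced points up to a small additive error; one then applies the union bound over the net of size $M=Z^{O(1)}$ instead of over $S$, and this cost is absorbed by the moment bound $(CN^2k/Z)^k$ of Lemma 6.1 because $Z\ge y=10^5N^2k$. Concretely, the paper splits $[y,m/2)$ at $k^2$ and then dyadically into blocks $[z_j,z_{j+1})$ with $z_j\asymp 2^j$ (Propositions 6.2 and 6.3); on each block it either discretizes at scale $4^{-j}$ (when $4^j<|S|$) or uses the trivial union bound over $S$ (when $4^j\ge|S|$, so that $|S|\le4^j$ is beaten by $(Ck/2^j)^k$), and recombines the blocks with H\"older's inequality. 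The exponent $8k$ in the error term arises from the weights $j^{4k}$ in that H\"older step, not from passing to an $8k$-th moment as you guessed. Without some version of this multi-scale discretization, your argument cannot produce a main term free of the factor $|S|$.
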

In the case where $\{\ph_a\}_{a\in \Omega_m}$ satisfies Assumption 3 with $\eta>1$, we can deduce the upper bound of Theorem \ref{Main} from Theorems \ref{AsympG} and \ref{MomentsMaxTail}.
\begin{proof}[Proof of the upper bound in case A) of Theorem \ref{Main}]
Let $k \leq (\eta-1)\log m/(50\log\log m)$ be a large positive integer to be chosen, and put $H=10^5 N^2 k$. First, combining equation \eqref{UpperReducePoints} with Theorem \ref{AsympG} we deduce that 
$$ \frac{\M(\ph_a)}{\sqrt{m}} \leq \frac{N}{\pi}\log k+ \frac{1}{2\pi} \max_{0\leq j\le m-1} \left|\sum_{H\leq |h|<m/2} \frac{e_m\left(j h\right)- 1}{ h} \ta(h)\right| +C_0,$$
for some positive constant $C_0$. We assume that $V$ is sufficiently large and choose $k=[C_2\exp(\pi V/N)]$, where $C_2= \exp(-\frac{\pi}{N}(C_0+\frac{1}{2\pi}))$. Therefore, appealing to Theorem \ref{MomentsMaxTail} with $S=\{j/m : 0\leq j\le m-1\}$ we obtain
\begin{equation}\label{DistribuMoments}
\begin{aligned}
\Phi_{\mathcal{F}}(V) 
&\leq \frac{1}{|\Omega_m|} \left|\left\{a\in \Omega_m: \max_{0\leq j\le m-1} \left|\sum_{H\leq |h|<m/2} \frac{e_m\left(j h\right)- 1}{ h} \ta(h)\right| \geq 1\right\}\right| \\
& \leq \frac{1}{|\Omega_m|} \sum_{a\in \Omega_m} \max_{0\leq j\le m-1} \left|\sum_{H\leq |h|<m/2} \frac{e_m\left(j h\right)- 1}{ h} \ta(h)\right|^{2k} \\
& \ll e^{-2k} + (4\log m)^{10k} m^{1-\eta} \ll \exp\left(-C_2\exp\left(\frac{\pi}{N}V\right)\right),
\end{aligned}
\end{equation}
as desired.

\end{proof}
If $\{\ph_a\}_{a\in \Omega_m}$ satisfies Assumption 3 with $\eta\leq 1$ (which corresponds to cases B) and C) of Theorem \ref{Main}), then the above argument no longer works since $|\{j/m : 0\leq j\le m-1\}|=m$ is too big. To overcome this problem, we shall suppose that our family satisfies Assumption 1, and use it to reduce the number of points $j\le m-1$ where the maximum of $\left|\sum_{0\leq n\le j}\ph_a(n)\right|$ can occur. Let $J\leq \sqrt{m}$ be a parameter to be chosen, and split the interval $[0,m]$ into $J$ intervals $I_{j}:=[x_{j},x_{j+1}]$ where for each $j=0,..., J$ we put
$$
x_{j}:=\frac{j}{J}m.
$$
We first consider case B) of Theorem \ref{Main} (where $\eta \in (1/2, 1]$) since it is easier. 

\begin{proof}[Proof of the upper bound in case B) of Theorem \ref{Main}]
We choose $J=\lfloor\sqrt{m}\rfloor$. For $a\in \Omega_m$, let $r_{a}$ be an integer in the interval $[0,m)$ such that
\[
\mathcal{M}(\ph_a)= \Big|\sum_{0\leq n\leq r_a}\ph_a(n)\Big|.
\]
Then there exists $0\leq j\leq J-1$ such that $r_a\in [x_{j},x_{j+1}]$, and hence
\begin{align*}
\Big|\frac{1}{\sqrt{m}}\sum_{0\leq n\leq r_a}\ph_a(n)\Big|
& \leq \Big|\frac{1}{\sqrt{m}}\sum_{0\leq n\leq x_{j}}\ph_a(n)\Big|+\Big|\frac{1}{\sqrt{m}}\sum_{ x_{j}< n\leq r_a}\ph_a(n)\Big|\\
& \leq \Big|\frac{1}{\sqrt{m}}\sum_{0\leq n\leq x_{j}}\ph_a(n)\Big|+ O(1),
\end{align*}
since $\max_{a\in \Omega_m}||\ph_a||\ll 1$ and $|r_a-x_j|\leq m/J\ll \sqrt{m}.$ This implies that 
$$  \frac{\M(\ph_a)}{\sqrt{m}}= \max_{0\leq j\leq J-1} \Big|\frac{1}{\sqrt{m}}\sum_{0\leq n\leq x_{j}}\ph_a(n)\Big| +O(1).$$ 
We now use the same argument leading up to \eqref{DistribuMoments} with the same choices of $k \leq (\eta-1/2)\log m/(30\log\log m)$ and $H$, but with $S=\{x_j/m : 0\leq j\leq J-1\}$ (and perhaps a different choice for the constant $C_0$).  This gives 
\begin{align*}
\Phi_{\mathcal{F}}(V) 
& \leq \frac{1}{|\Omega_m|} \sum_{a\in \Omega_m} \max_{0\leq j\leq J-1} \left|\sum_{H\leq |h|<m/2} \frac{e_m\left(x_j h\right)- 1}{ h} \ta(h)\right|^{2k} \\
& \ll e^{-2k} + (4\log m)^{10k} m^{1/2-\eta} \ll \exp\left(-C_2\exp\left(\frac{\pi}{N}V\right)\right).
\end{align*}
\end{proof}
The above argument fails if $\{\ph_a\}_{a\in \Omega_m}$ satisfies Assumption 3 with $\eta=1/2$, which is the most interesting case of Theorem \ref{Main}. In this case, to reduce the number of points of $S$ further (below $m^{1/2-\varepsilon}$ for some $\varepsilon$),  we need power saving bounds for short sums $\sum_{x\leq n\leq x+h} \ph_a(n)$ in \emph{the P\'olya-Vinogradov range}, which corresponds to $h$ being of size around $\sqrt{m}$.
Unfortunately, such bounds are only known in very few cases (for example they are known for Birch sums but not for Kloosterman sums). To overcome this problem, we use Assumption 4 in order to obtain strong bounds for these short sums uniformly over all intervals $I$ of length $|I|\leq m^{1/2+\delta/2}$ (intervals at the edge of the P\'olya-Vinogradov range), in an average sense. In fact, it is this uniformity aspect (see Lemma \ref{BoundMomentShort} below) that allows us to obtain the upper bound of Theorem \ref{Main} in this case.
Let $\alpha$ and $\delta$ be as in Assumption 4. As before we will split the interval $[0,m]$ into $J$ intervals $I_{j}:=[x_{j},x_{j+1}]$ where 
$x_{j}:=\frac{j}{J}m$, and where we now choose $J= \lfloor m^{1/2-\delta/5}\rfloor$. We shall prove the following result in section \ref{ShortSec}.
\begin{thm}\label{ReduceMaxPoints}
Let $m$ be large and $J= \lfloor m^{1/2-\delta/5}\rfloor$. Let $\{\ph_a\}_{a\in \Omega_m}$ be a family of $m$-periodic complex valued functions satisfying Assumptions 1 and 4. 
There exists a set $\E_m\subset \Omega_m$ with $|\E_m|\leq m^{-\delta/10} |\Omega_m|$ such that for all $a\in \Omega_m\setminus \E_m$ we have 
$$ \M(\ph_a)= \max_{0\leq j\leq J-1}\Big|\sum_{0\leq n\leq x_{j}}\ph_a(n)\Big|+O\left(m^{1/2-\delta/(8\alpha)}\right).$$
\end{thm}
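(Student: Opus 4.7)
For any integer $x\in[0,m)$, let $j$ be the unique index with $x_j\le x<x_{j+1}$, and decompose
\[
\sum_{0\le n\le x}\ph_a(n)=\sum_{0\le n\le x_j}\ph_a(n)+\sum_{x_j<n\le x}\ph_a(n).
\]
The task reduces to showing that, outside an exceptional set $\E_m\subset\Omega_m$ of size at most $m^{-\delta/10}|\Omega_m|$, the ``tail sum'' $\sum_{x_j<n\le x}\ph_a(n)$ is $O(m^{1/2-\delta/(8\alpha)})$ uniformly in $j$ and $x$. Note that each block $[x_j,x_{j+1}]$ has length $L:=\lceil m/J\rceil\ll m^{1/2+\delta/5}\le m^{1/2+\delta}$, placing it squarely in the range in which Assumption 4 applies.

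First I would perform a net reduction. Set $\Delta:=\lceil m^{1/2-\delta/(8\alpha)}\rceil$, and inside each block $[x_j,x_{j+1}]$ introduce an arithmetic net $\mathcal{S}_j$ of spacing $\Delta$, so that $\mathcal{S}:=\bigcup_j\mathcal{S}_j$ has size $O(m/\Delta)$. By Assumption 1, perturbing the upper endpoint of the tail sum by at most $\Delta$ changes its value by $O(\Delta)$, so for every integer $x\in(x_j,x_{j+1}]$,
\[
\Bigl|\sum_{x_j<n\le x}\ph_a(n)\Bigr|\le\max_{y^*\in\mathcal{S}_j}\Bigl|\sum_{x_j<n\le y^*}\ph_a(n)\Bigr|+O(\Delta),
\]
reducing the problem to a maximum over the finite net $\mathcal{S}$.

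Next I would combine a moment estimate with Markov's inequality. For each individual pair $(j,y^*)$, Assumption 4 applied to the interval $I=(x_j,y^*]$ (with $|I|\le L\le m^{1/2+\delta}$) gives
\[
\frac{1}{|\Omega_m|}\sum_{a\in\Omega_m}\Bigl|\sum_{x_j<n\le y^*}\ph_a(n)\Bigr|^{\alpha}\ll m^{(\alpha-1)/2-\delta}.
\]
Bounding the maximum by the sum over the $O(m/\Delta)$ pairs, one obtains
\[
\frac{1}{|\Omega_m|}\sum_{a\in\Omega_m}\max_{j,\,y^*\in\mathcal{S}_j}\Bigl|\sum_{x_j<n\le y^*}\ph_a(n)\Bigr|^{\alpha}\ll\frac{m^{(\alpha+1)/2-\delta}}{\Delta}.
\]
Markov's inequality at threshold $T:=\Delta$ then shows that the set $\E_m$ of $a$ for which this maximum exceeds $T$ satisfies
\[
|\E_m|\ll|\Omega_m|\cdot\frac{m^{(\alpha+1)/2-\delta}}{\Delta\cdot T^{\alpha}}=|\Omega_m|\cdot m^{-\delta(7\alpha-1)/(8\alpha)}\le m^{-\delta/10}|\Omega_m|,
\]
since $(7\alpha-1)/(8\alpha)\ge 3/4$ for $\alpha\ge 1$. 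For $a\notin\E_m$, chaining the two steps gives a tail bound of $O(T+\Delta)=O(m^{1/2-\delta/(8\alpha)})$, which is the required conclusion.

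The main obstacle is the calibration of the net spacing $\Delta$: shrinking it tightens the $O(\Delta)$ approximation from Assumption 1 but inflates the union-bound factor $m/\Delta$ in the moment estimate, while enlarging it has the opposite effect. The symmetric choice $\Delta=T=m^{1/2-\delta/(8\alpha)}$ equalizes these two contributions and leaves a comfortable exponent $(7\alpha-1)/(8\alpha)\ge 3/4$ in the exceptional-set bound; verifying this balance and handling the edge case $y^*>x$ (where one rewrites $\sum_{x_j<n\le x}=\sum_{x_j<n\le y^*}-\sum_{x<n\le y^*}$ and absorbs the last sum into the $O(\Delta)$ term) are the only technical points requiring care.
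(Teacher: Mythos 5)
Your argument is correct and follows essentially the same route as the paper: discretize the family of intervals, apply Assumption 4 to each one together with a union bound, absorb the discretization error via Assumption 1, and finish with Markov's inequality. The only cosmetic difference is that the paper packages the discretization into a standalone lemma bounding the $\alpha$-th moment of the maximum over \emph{all} intervals of length at most $L$ (via a two-dimensional covering of the set of endpoint pairs), whereas you discretize only the right endpoints since the left endpoints are already restricted to the points $x_j$; both yield comparable exponents and the stated exceptional-set bound.
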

We end this section by deducing the upper bound in case C) of Theorem \ref{Main} from Theorems \ref{AsympG}, \ref{MomentsMaxTail} and \ref{ReduceMaxPoints}.
\begin{proof}[Proof of the upper bound in case C) of Theorem \ref{Main}]
Let $\E_m$ be the exceptional set in Theorem \ref{ReduceMaxPoints}, and $a\in \Omega_m\setminus \E_m$. 
Combining this result with the discrete Plancherel formula \eqref{Planche} and the estimate \eqref{Planche2} we obtain
$$
 \frac{\M(\ph_a)}{\sqrt{m}}= \frac{1}{2\pi} \max_{0\leq j\leq J-1}\Big|\sum_{1\leq |h|<m/2} \frac{e_m\left(x_j h\right)-1}{ h} \ta(h)\Big|+O\left(1\right).
$$
Let $k \leq {\delta}(\log m)/(200\log\log m)$ be a large positive integer to be chosen, and put $H=10^5 N^2 k$. First, combining equation \eqref{UpperReducePoints} with Theorem \ref{AsympG}, we deduce that if $a\in \Omega_m\setminus \mathcal{E}_m$ we have 
$$ \frac{\M(\ph_a)}{\sqrt{m}} \leq \frac{N}{\pi}\log k+ \frac{1}{2\pi} \max_{0\leq j\leq J-1} \left|\sum_{H\leq |h|<m/2} \frac{e_m\left(x_j h\right)- 1}{ h} \ta(h)\right| +C_0,$$
for some positive constant $C_0$. Repeating the same argument leading up to \eqref{DistribuMoments} with the same choice of $k$ gives
\begin{align*}
\Phi_{\mathcal{F}}(V) 
&\leq \frac{1}{|\Omega_m|} \left|\left\{a\in \Omega_m\setminus \mathcal{E}_m: \max_{0\leq j\leq J-1} \left|\sum_{H\leq |h|<m/2} \frac{e_m\left(x_j h\right)- 1}{ h} \ta(h)\right| \geq 1\right\}\right| +O\left(\frac{|\mathcal{E}_m|}{|\Omega_m|}\right)\\
& \leq \frac{1}{|\Omega_m|} \sum_{a\in \Omega_m} \max_{0\leq j\leq J-1} \left|\sum_{H\leq |h|<m/2} \frac{e_m\left(x_j h\right)- 1}{ h} \ta(h)\right|^{2k} +O\left(m^{-\delta/10}\right)\\
& \ll e^{-2k} + (4\log m)^{10k} m^{-\delta/10} \ll \exp\left(-C_2\exp\left(\frac{\pi}{N}V\right)\right).
\end{align*}
\end{proof}


\section{Controlling short sums of periodic functions: Proof of Theorem \ref{ReduceMaxPoints} \label{ShortSec}}

In order to prove Theorem \ref{ReduceMaxPoints}, we will use Assumptions 1 and 4 to obtain a non-trivial upper bound for the $\alpha$-th moment of the \emph{maximum} over intervals $I$ (with length up to a certain parameter $L$) of the short sum $\sum_{n\in I} \ph_a(n)$. 
\begin{lem}\label{BoundMomentShort}
Let $m$ be large, and $\{\ph_a\}_{a\in \Omega_m}$ be a family of $m$-periodic complex valued functions satisfying Assumptions 1 and 4. For any real number $1\leq L\leq m^{1/2+\delta/2}$ we have
\[
\frac{1}{|\Omega_{m}|}\sum_{a\in\Omega_m}\max_{|I|\leq L}\Big|\frac{1}{\sqrt{m}}\sum_{n\in I}\ph_a(n)\Big|^{\alpha}\ll Lm^{-1/2-\delta/2}+m^{-\delta/4},
\]
where the maximum is taken over all intervals $I=[x, y]\subset [0, m]$ with $|I|\leq L$. 
\end{lem}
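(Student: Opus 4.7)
The plan is to discretize the family of intervals of length at most $L$ by forcing the endpoints to lie on a grid of spacing $K$, where $K$ is a free parameter to be chosen at the end. Assumption~1 controls the discretization error, while Assumption~4 controls the $\alpha$-th moment of the short sum over each individual grid interval.

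Concretely, set $K = \lfloor m^{1/2-\delta/(4\alpha)}\rfloor$ and let $\mathcal{G} = \{jK : 0\leq j\leq m/K\}\cup\{m\}$. For any interval $I = [x,y]\subset[0,m]$ with $|I|\leq L$, let $\widetilde{I} = [x',y']$ be the smallest interval with endpoints in $\mathcal{G}$ containing $I$. Then $I\subset \widetilde{I}$, $|\widetilde{I}\setminus I|\leq 2K$, and $|\widetilde{I}|\leq L+2K\leq m^{1/2+\delta}$ for $m$ sufficiently large (using $L\leq m^{1/2+\delta/2}$ and $2K\leq m^{1/2+\delta/2}$). By Assumption~1,
\[
\Big|\frac{1}{\sqrt{m}}\sum_{n\in I}\ph_a(n)-\frac{1}{\sqrt{m}}\sum_{n\in\widetilde{I}}\ph_a(n)\Big|\ll \frac{K}{\sqrt{m}},
\]
uniformly in $a\in\Omega_m$. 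Since $\alpha\geq 1$, the inequality $(u+v)^\alpha\ll u^\alpha+v^\alpha$ gives
\[
\max_{|I|\leq L}\Big|\frac{1}{\sqrt{m}}\sum_{n\in I}\ph_a(n)\Big|^{\alpha}\ll \max_{I'\in\mathcal{I}}\Big|\frac{1}{\sqrt{m}}\sum_{n\in I'}\ph_a(n)\Big|^{\alpha}+\Big(\frac{K}{\sqrt{m}}\Big)^{\alpha},
\]
where $\mathcal{I}$ is the (finite) family of intervals with endpoints in $\mathcal{G}$ and length at most $L+2K$. A direct count yields $|\mathcal{I}|\ll mL/K^2+m/K$.

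Averaging over $a\in\Omega_m$, bounding the maximum by the sum, and applying Assumption~4 to each $I'\in\mathcal{I}$ (which is licit since $|I'|\leq m^{1/2+\delta}$), one obtains
\[
\frac{1}{|\Omega_m|}\sum_{a\in\Omega_m}\max_{|I|\leq L}\Big|\frac{1}{\sqrt{m}}\sum_{n\in I}\ph_a(n)\Big|^{\alpha}\ll |\mathcal{I}|\cdot m^{-1/2-\delta}+\Big(\frac{K}{\sqrt{m}}\Big)^{\alpha}.
\]
For the stated choice of $K$, the last term equals $m^{-\delta/4}$, and $|\mathcal{I}|\,m^{-1/2-\delta}\ll Lm^{-1/2-\delta/2}+m^{-3\delta/4}$; both are absorbed in the target bound $Lm^{-1/2-\delta/2}+m^{-\delta/4}$. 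The only subtle point is the simultaneous optimization of $K$, which forces the discretization error $(K/\sqrt{m})^\alpha$ to balance against the size of $\mathcal{I}$. This balancing is possible precisely because Assumption~4 provides a \emph{power-saving} factor $m^{-1/2-\delta}$ rather than a mere $o(1)$, leaving enough slack to afford the crude union bound $\max\leq\sum$ over all grid intervals.
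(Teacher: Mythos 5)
Your proof is correct and follows essentially the same route as the paper's: discretize the endpoints to a grid of spacing $K$ (the paper uses $B=m^{1/2-\delta/4}$ and covers the trapezoid of endpoint pairs by squares of side $B$), control the snapping error via Assumption~1, bound the maximum by the sum over the $\ll mL/K^2+m/K$ grid intervals, and apply Assumption~4 to each. The only difference is the cosmetic choice of spacing $K=m^{1/2-\delta/(4\alpha)}$ versus the paper's $B=m^{1/2-\delta/4}$, and both choices land within the stated bound.
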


\begin{proof}
The intervals $I=[x,y]$ with  $0\leq x<y\leq m$, and $|I|=y-x\leq L$ can be parametrized by the set of points in the region of the plane delimited by the trapezoid  
\[
T_{L}:=\{(x,y)\in\mathbb{R}^{2}: 0\leq x<y\leq m,\quad y\leq x+L\}.
\]
Let $0<B\leq \sqrt{m}$ be a parameter to be chosen, and for any $k,\ell\in\mathbb{N}$ we define $S_{k,\ell,B}:=[kB,(k+1)B)\times [\ell B,(\ell+1)B)$. The set of squares given by
\[
\{S_{k,\ell ,B} \ | \ S_{k,\ell,B}\cap T_{L}\neq\emptyset\}
\]
is a disjoint cover of $T_{L}$ of size
\begin{equation}\label{NumberSquares}
\begin{split}
\mathcal{N}_{L,B}:=|\{S_{k,\ell,B} \ | \ S_{k,\ell,B}\cap T_{L}\neq\emptyset\}|&\ll\frac{A(T_{L})}{B^{2}} + \frac{m}{B}
\\&\ll\frac{mL}{B^{2}}+\frac{m}{B},
\end{split}
\end{equation}
where $A(D)$ denotes the area of $D$. 
For any $a\in\Omega_{m}$ let us denote by $I_{a}=[x_{a},y_{a}]$ an interval with $|I_{a}|\leq L$ such that
\[
\Big|\frac{1}{\sqrt{m}}\sum_{ n\in I_{a}}\ph_{a}(n)\Big|=\max_{|I|\leq L}\Big|\frac{1}{\sqrt{m}}\sum_{ n\in I}\ph_{a}(n)\Big|.
\]
Then there exists $k_{a},\ell_{a}\in\mathbb{N}$ such that $(x_{a},y_{a})\in S_{k_a,\ell_a,B}$. Hence
\[
\begin{split}
\frac{1}{\sqrt{m}}\sum_{ n\in I_a}\ph_a(n)&=\frac{1}{\sqrt{m}}\sum_{k_aB\leq n\leq \ell_{a}B}\ph_a(n)+O\Big(\frac{x_{a}-k_{a}B+y_{a}-\ell_{a}B}{\sqrt{m}}\Big)\\&=\frac{1}{\sqrt{m}}\sum_{k_{a}B\leq n\leq \ell_{a}B}\ph_{a}(n)+O\Big(\frac{B}{\sqrt{m}}\Big),
\end{split}
\]
by Assumption 1.
Using this estimate together with the elementary inequality $|x+y|^{\alpha}\leq (2 \max(|x|, |y|))^{\alpha}\leq 2^{\alpha}(|x|^{\alpha}+|y|^{\alpha})$ we get
\[
\begin{split}
\frac{1}{|\Omega_{m}|}\sum_{a\in\Omega_{m}}\max_{|I|\leq L}\Big|\frac{1}{\sqrt{m}}\sum_{n\in I}\ph_{a}(n)\Big|^{\alpha}&=\frac{1}{|\Omega_m|}\sum_{a\in\Omega_m}\Big|\frac{1}{\sqrt{m}}\sum_{k_a B\leq n\leq \ell_{a} B}\ph_a(n)+ O\Big(\frac{B}{\sqrt{m}}\Big)\Big|^{\alpha}\\&\ll \frac{1}{|\Omega_m|}\sum_{a\in\Omega_m}\Big|\frac{1}{\sqrt{m}}\sum_{k_a B \leq n\leq \ell_a B}\ph_a(n)\Big|^{\alpha}+\frac{B^{\alpha}}{m^{\alpha/2}}.
\end{split}
\]
Furthermore, observe that
\[
\begin{split}
\frac{1}{|\Omega_m|}\sum_{a\in\Omega_m}\Big|\frac{1}{\sqrt{m}}\sum_{k_a B\leq n\leq \ell_a B}\ph_a(n)\Big|^{\alpha}&\leq \frac{1}{|\Omega_m|}\sum_{a\in\Omega_m}\sum_{\substack{k,\ell\in\mathbb{N} :\\ S_{k,\ell, B}\cap T_{L}\neq\emptyset}}\Big|\frac{1}{\sqrt{m}}\sum_{kB\leq n\leq \ell B}\ph_a(n)\Big|^{\alpha}\\&= \sum_{\substack{k,\ell\in\mathbb{N} :\\ S_{k,\ell,B}\cap T_{L}\neq\emptyset}}\frac{1}{|\Omega_m|}\sum_{a\in\Omega_m}\Big|\frac{1}{\sqrt{m}}\sum_{kB\leq n\leq \ell B}\ph_a(n)\Big|^{\alpha}\\
& \ll m^{-1/2-\delta} \mathcal{N}_{L, B}.
\end{split}
\]
by Assumption 4, since $B\leq m^{1/2}$. 
Therefore, we deduce from \eqref{NumberSquares} that
\[
\frac{1}{|\Omega_m|}\sum_{a\in\Omega_m}\max_{|I|\leq L}\Big|\frac{1}{\sqrt{m}}\sum_{n\in I}\ph_a(n)\Big|^{\alpha}\ll \frac{L m^{1/2-\delta}}{B^{2}}+\frac{m^{1/2-\delta}}{B}+\frac{B^{\alpha}}{m^{\alpha/2}}.
\]
Choosing $B=m^{1/2-\delta/4}$ gives the result. 
\end{proof}

\begin{proof}[Proof of Theorem \ref{ReduceMaxPoints}]
It only suffices to prove the implicit upper bound, since one trivially has 
$$ \M(\ph_a)\geq \max_{1\leq j\leq J-1}\Big|\sum_{0\le n\le x_{j}}\ph_a(n)\Big|.$$
Let $L=m^{1/2+\delta/4}$ and define $\E_m$ to be the set of elements $a\in \Omega_m$ such that 
$$\max_{|I|\leq L}\Big|\frac{1}{\sqrt{m}}\sum_{n\in I}\ph_a(n)\Big| > m^{-\delta/(8\alpha)} .$$
Then, it follows from Lemma \ref{BoundMomentShort} that 
$$
\frac{|\E_m|}{|\Omega_m|}\leq
\frac{m^{\frac{\delta}{8}}}{|\Omega_m|}\sum_{a \in \Omega_m}\max_{|I|\le L}\Big|\frac{1}{\sqrt{m}}\sum_{n\in I}\ph_a(n)\Big|^{\alpha}\ll m^{-\delta/8}.
$$
Moreover, for $a\in \Omega_m$, let $r_{a}$ be an integer in the interval $[0,m)$ such that
\[
\mathcal{M}(\ph_a)= \Big|\sum_{0\leq n\leq r_a}\ph_a(n)\Big|.
\]
Then there exists $0\leq j\leq J-1$ such that $r_a\in [x_{j},x_{j+1}]$, and hence
$$
\Big|\frac{1}{\sqrt{m}}\sum_{0\leq n\leq r_a}\ph_a(n)\Big|\leq \Big|\frac{1}{\sqrt{m}}\sum_{0\leq n\leq x_{j}}\ph_a(n)\Big|+\Big|\frac{1}{\sqrt{m}}\sum_{ x_{j}< n\leq r_a}\ph_a(n)\Big|.
$$
Recall that $J= \lfloor m^{1/2-\delta/5}\rfloor$, and hence $r_a-x_j\leq x_{j+1}-x_j= m/J \leq L$ if $m$ is large enough. Therefore, we deduce that if $a\in \Omega_m\setminus \E_m$ then
\[
\Big|\frac{1}{\sqrt{m}}\sum_{ x_{j}< n\leq r_a}\ph_a(n)\Big|\le m^{-\delta/(8\alpha)}.
\]
This implies
$$ 
\frac{\M(\ph_a)}{\sqrt{m}}\leq \max_{1\leq j\leq J-1}\Big|\frac{1}{\sqrt{m}}\sum_{0\leq n\leq  x_{j}}\ph_a(n)\Big|+O\left(m^{-\delta/(8\alpha)}\right),
$$
for all $a\in \Omega_m\setminus \E_m$,  completing the proof. 
\end{proof}


\section{An asymptotic estimate for the maximum of a random sum: Proof of Theorem \ref{AsympG} \label{AsympGSec}}

Recall that
\[\G(H)=\max_{\alpha\in [0,1]} \max_{(x_h)_{1\le|h|\le H}\in[-1,1]^{2H}}\left|\sum_{1\le|h|\le H}\frac{e(\alpha h)-1}{h}x_h\right|.\]
We shall deduce Theorem \ref{AsympG} from the following result, which is an exact formula for $\G(H)$ when $H$ is odd. 
\begin{pro}\label{FormulaOddG}
If $H$ is an odd positive integer, then
\[\G(H)=2\sum_{h=1}^H\frac{1-(-1)^h}{h}.\]
\end{pro}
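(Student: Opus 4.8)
The plan is to compute $\G(H)$ by first performing the inner maximization over the $x_h$ for fixed $\alpha$, and then maximizing the resulting function of $\alpha$. Write $c_h(\alpha) := (e(\alpha h)-1)/h$. Since the $x_h$ range independently over $[-1,1]$ but the sum $\sum_{1\le|h|\le H} c_h(\alpha)x_h$ is complex, the inner maximum of the absolute value is $\max_{\theta\in[0,1)} \sum_{1\le|h|\le H} |\re(e(-\theta)c_h(\alpha))|$, because for a fixed direction $e(\theta)$ the optimal choice is $x_h=\operatorname{sgn}(\re(e(-\theta)c_h(\alpha)))$. Thus
\[
\G(H) = \max_{\alpha,\theta} \sum_{1\le|h|\le H} \left| \re\!\left(e(-\theta)\,\frac{e(\alpha h)-1}{h}\right)\right|.
\]
First I would pair up the terms $h$ and $-h$: writing $h\ge 1$ and combining, $c_h(\alpha)+\overline{c_h(\alpha)}$-type manipulations reduce the two terms to something like $\tfrac{1}{h}\big(|\re(\cdots e(\alpha h))| + |\re(\cdots e(-\alpha h))|\big)$ after accounting for the $-1$. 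The key simplification I expect is that the extremal configuration is $\alpha = 1/2$ together with the right choice of $\theta$ (suggested already by the lower bound construction in \eqref{BoundsG} and by the shape of the claimed answer $2\sum_{h=1}^H (1-(-1)^h)/h$, which is exactly what the $\alpha=1/2$, $\theta=0$ choice gives, since $e(h/2)-1 = (-1)^h-1$ equals $-2$ for odd $h$ and $0$ for even $h$).

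So the heart of the argument is the upper bound: for every $\alpha\in[0,1)$ and every $\theta$,
\[
\sum_{1\le|h|\le H} \left| \re\!\left(e(-\theta)\,\frac{e(\alpha h)-1}{h}\right)\right| \le 2\sum_{h=1}^{H} \frac{1-(-1)^h}{h}.
\]
I would try to prove this by a Fourier/rearrangement argument: expand $\re(e(-\theta)(e(\alpha h)-1)/h) = \tfrac1h(\cos(2\pi\alpha h - 2\pi\theta) - \cos 2\pi\theta)$, and bound $|\cos(2\pi\alpha h-2\pi\theta)-\cos2\pi\theta|$ using a product-to-sum identity, $|\cos A - \cos B| = 2|\sin\tfrac{A+B}{2}||\sin\tfrac{A-B}{2}| \le 2|\sin\tfrac{A-B}{2}| = 2|\sin(\pi\alpha h)|$. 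This gives the clean bound
\[
\sum_{1\le|h|\le H}\left|\re\!\left(e(-\theta)\frac{e(\alpha h)-1}{h}\right)\right| \le 4\sum_{h=1}^{H}\frac{|\sin(\pi\alpha h)|}{h},
\]
which is now independent of $\theta$, and it remains to show $\sum_{h=1}^H |\sin(\pi\alpha h)|/h$ is maximized (over $\alpha$) at $\alpha=1/2$ when $H$ is odd, with value $\sum_{h\text{ odd},\le H} 1/h = \tfrac12\sum_{h=1}^H (1-(-1)^h)/h$. Combined with the matching lower bound from $\alpha=1/2,\theta=0$, this yields the proposition. I would establish the one-variable maximization either by the observation that $\alpha\mapsto\sum_{h\le H}|\sin(\pi\alpha h)|/h$ is a finite sum of functions each maximized on a common point for $\alpha = 1/2$ when restricted suitably, or — more robustly — by noting $|\sin(\pi\alpha h)| = |\sin(\pi\alpha h)|$ and using a symmetrization $\alpha\mapsto 1-\alpha$ together with a careful term-by-term comparison; the parity of $H$ enters precisely here, ensuring no cancellation-loss at the top term $h=H$.

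The main obstacle I anticipate is exactly this last step: showing that $\alpha = 1/2$ is the global maximum of $g(\alpha) := \sum_{h=1}^H |\sin(\pi\alpha h)|/h$ on $[0,1)$, and that the inequality $|\cos A - \cos B|\le 2|\sin\tfrac{A-B}{2}|$ is tight simultaneously for all $h$ at that point. The bound $|\cos A-\cos B|\le 2|\sin\tfrac{A-B}{2}|$ is an equality only when $|\sin\tfrac{A+B}{2}|=1$, i.e. $\alpha h\equiv \theta + \tfrac12\pmod 1$ adjusted appropriately; one must check that the choice $\alpha=1/2$, $\theta=0$ makes this hold for every \emph{odd} $h$ (it does, since then $\alpha h + \text{const}$ lands at a half-integer) and that the even-$h$ terms contribute nothing to either side — so the chain of inequalities is sharp and $\G(H)$ equals the claimed sum. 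If the direct one-variable optimization of $g$ turns out to be delicate, a fallback is to avoid it entirely: go back a step and maximize $\sum_{1\le|h|\le H}\tfrac1h|\cos(2\pi\alpha h-2\pi\theta)-\cos2\pi\theta|$ jointly in $(\alpha,\theta)$ by a convexity/extreme-point argument, observing that for fixed $\alpha$ the maximum over $\theta$ is attained at a point where the derivative vanishes, reducing to finitely many candidate $\theta$, and then that among these the configuration collapses to $\alpha=1/2$ by the parity of $H$.
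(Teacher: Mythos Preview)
Your reduction to
\[
\G(H)=\max_{\alpha,\theta}\sum_{1\le|h|\le H}\Bigl|\re\Bigl(e(-\theta)\,\frac{e(\alpha h)-1}{h}\Bigr)\Bigr|
\]
is fine, and the product-to-sum identity gives the clean intermediate bound
\[
\sum_{1\le|h|\le H}\Bigl|\re\Bigl(e(-\theta)\,\frac{e(\alpha h)-1}{h}\Bigr)\Bigr|\le 4\sum_{h=1}^{H}\frac{|\sin(\pi\alpha h)|}{h}=:4g(\alpha).
\]
However, the remaining claim --- that $g(\alpha)$ is maximised at $\alpha=1/2$ for $H$ odd --- is \emph{false}, and this is a genuine gap, not just a delicate step. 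Take $H=3$ and $\alpha=1/2+\varepsilon$ for small $\varepsilon>0$. The odd-index terms satisfy $|\sin(\pi h/2+\pi h\varepsilon)|=|\cos(\pi h\varepsilon)|=1-O(\varepsilon^{2})$, while the even-index term contributes $\tfrac{1}{2}|\sin(\pi+2\pi\varepsilon)|=\pi|\varepsilon|+O(\varepsilon^{3})$. Hence
\[
g\Bigl(\tfrac12+\varepsilon\Bigr)=g\Bigl(\tfrac12\Bigr)+\pi|\varepsilon|+O(\varepsilon^{2})>g\Bigl(\tfrac12\Bigr)
\]
for small $\varepsilon\neq0$; numerically, $g(0.55)\approx 1.44>4/3=g(0.5)$. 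Thus $4g(\alpha)$ can exceed the target $2\sum_{h\le H}(1-(-1)^{h})/h$, and the chain of inequalities is not closable from this point. The underlying reason is that the step $|\sin(\pi\alpha h\pm2\pi\theta)|\le 1$ discards the $\theta$-dependence too early: for $\alpha$ near $1/2$ no single $\theta$ makes that factor equal to $1$ for all $h$, and the true maximum over $\theta$ is strictly smaller than $4g(\alpha)$.

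The paper's proof avoids this loss by \emph{not} taking absolute values termwise. It separates $S$ into its real and imaginary parts, obtaining expressions in which only $\sin^{2}(\pi\alpha h)$ (not $|\sin(\pi\alpha h)|$) appears, then bounds $|S|^{2}=\re(S)^{2}+\im(S)^{2}$ via Cauchy--Schwarz and the elementary constraint $|x_h-x_{-h}|+|x_h+x_{-h}|\le 2$. The resulting inequality reduces to showing
\[
\sum_{h=1}^{H}\frac{\sin^{2}(\pi\alpha h)}{h}\le\sum_{h=1}^{H}\frac{1-(-1)^{h}}{2h}\qquad(H\text{ odd}),
\]
which \emph{is} true (Lemma~\ref{LemmaUpperSin}) precisely because with $\sin^{2}$ the even-index contribution near $\alpha=1/2$ is now of order $\varepsilon^{2}$, matching rather than dominating the second-order loss from the odd terms. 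So the crucial ingredient you are missing is a way to get $\sin^{2}$ rather than $|\sin|$ into the analysis; your fallback suggestion of jointly optimising in $(\alpha,\theta)$ is in spirit the right direction, but as written it does not supply the needed mechanism.
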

To prove this result, we need the following lemma. 
\begin{lem}\label{LemmaUpperSin}
Let $\alpha$ be a real number. If $H\geq 1$ is odd, then
\[\sum_{h=1}^H\frac{\sin^2(\pi\alpha h)}{h}\le\sum_{h=1}^H\frac{1-(-1)^h}{2h}.\]
\end{lem}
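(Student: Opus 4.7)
The plan is to reduce the inequality to a positivity statement about an explicit trigonometric integral, then exhibit a decomposition of the integrand into two non-negative pieces (after integration).

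Using the identity $\sin^{2}x=(1-\cos(2x))/2$ together with $(-1)^{h}=\cos(h\pi)$, the claim is equivalent, with $\theta=2\pi\alpha$, to
\[
\sum_{h=1}^{H}\frac{\cos(h\theta)-\cos(h\pi)}{h}\ge 0\qquad\text{for all }\theta.
\]
Writing $(\cos(h\theta)-\cos(h\pi))/h=\int_{\theta}^{\pi}\sin(ht)\,dt$, exchanging the finite sum and the integral, and applying the standard Dirichlet-type formula $\sum_{h=1}^{H}\sin(ht)=\sin(Ht/2)\sin((H+1)t/2)/\sin(t/2)$, this reduces to showing
\[
F_{H}(\theta):=\int_{\theta}^{\pi}\frac{\sin(Ht/2)\sin((H+1)t/2)}{\sin(t/2)}\,dt\ge 0.
\]
By periodicity and evenness in $\theta$ one can restrict to $\theta\in[0,\pi]$.

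The next step is the substitution $u=\pi-t$, which moves the integration to $[0,\pi-\theta]$ and turns $\sin(t/2)$ into $\cos(u/2)$. Writing $H=2k+1$ and using $\sin((2k+1)\pi/2-(2k+1)u/2)=(-1)^{k}\cos(Hu/2)$ together with $\sin((k+1)\pi-(k+1)u)=(-1)^{k}\sin((k+1)u)$, the two sign factors cancel (this is where oddness of $H$ is used) and one gets
\[
F_{H}(\theta)=\int_{0}^{\pi-\theta}\frac{\cos(Hu/2)\sin((k+1)u)}{\cos(u/2)}\,du.
\]
Expanding $\cos((k+\frac{1}{2})u)\sin((k+1)u)$ by product-to-sum, and then using the addition formula $\sin((2k+\frac{3}{2})u)=\sin(Hu)\cos(u/2)+\cos(Hu)\sin(u/2)$, the integrand collapses to
\[
\frac{\sin(Hu)}{2}+\cos^{2}(Hu/2)\tan(u/2).
\]

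Finally I would conclude as follows. The integral of the first term over $[0,\pi-\theta]$ equals $(1-\cos(H(\pi-\theta)))/(2H)\ge 0$ by direct antidifferentiation, while the second integrand is pointwise non-negative on $[0,\pi]$ since $\tan(u/2)\ge 0$ there. Hence $F_{H}(\theta)\ge 0$, which is the required inequality. The main technical obstacle is the bookkeeping in the trigonometric simplification after the substitution $u=\pi-t$: one must carry through the $(-1)^{k}$ factors from both $\sin(H(\pi-u)/2)$ and $\sin((H+1)(\pi-u)/2)$ and verify that they cancel (this is exactly the parity condition $H$ odd) before the product-to-sum step can expose the non-negative decomposition.
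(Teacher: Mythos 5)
Your proof is correct. Every step checks out: the reduction to $F_H(\theta)=\int_\theta^\pi\sum_{h=1}^H\sin(ht)\,dt\ge0$, the restriction to $\theta\in[0,\pi]$ by evenness and $2\pi$-periodicity, the cancellation of the two $(-1)^k$ factors under $u=\pi-t$ (the only place oddness of $H$ enters), and the final decomposition of the reflected kernel as $\tfrac{1}{2}\sin(Hu)+\cos^2(Hu/2)\tan(u/2)$, whose first piece integrates to $(1-\cos(H(\pi-\theta)))/(2H)\ge0$ and whose second piece is pointwise non-negative on $[0,\pi)$ (and has a removable singularity at $u=\pi$, since $\cos(H\pi/2)=0$ for odd $H$, so the integral converges when $\theta=0$).

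The route is organized differently from the paper's, though both ultimately rest on the same kernel identity. The paper introduces the corrected function $g(t)=\frac{\cos(2\pi tH)}{4H}+\sum_{h\le H}\frac{\sin^2(\pi th)}{h}$, shows $g'(t)=\pi\sin^2(\pi tH)\cot(\pi t)\cdot\cos(\pi t)/\cos(\pi t)$ — more precisely $g'(t)=\pi\sin^2(\pi tH)\cos(\pi t)/\sin(\pi t)\ge0$ on $[0,1/2]$ — and evaluates at the endpoint $t=1/2$; the identity behind $g'\ge0$ is exactly the decomposition $\sum_{h\le H}\sin(h\theta)=\tfrac{1}{2}\sin(H\theta)+\sin^2(H\theta/2)\cot(\theta/2)$, which is the unreflected version of your decomposition at $u=\pi-\theta$. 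So both proofs split the conjugate Dirichlet kernel into an exact derivative plus a manifestly non-negative term, and both produce the identical deficit $\frac{1+\cos(2\pi\alpha H)}{4H}$ between the two sides of the inequality. What the paper's packaging buys is brevity (one differentiation, one endpoint evaluation, no substitution or product-to-sum bookkeeping); what yours buys is that the non-negativity is displayed as an explicit integral of non-negative data rather than inferred from monotonicity, and the reflection makes the role of the parity of $H$ visible at the level of the integrand rather than only at the endpoint. Either is a complete proof.
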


\begin{proof} Since $\sin^2(\pi(1-\alpha) h)=\sin^2(\pi\alpha h)$ we may assume that $\alpha\in[0,1/2]$.  Let $g:[0,1/2]\rightarrow\mathbb{R}$ be defined by 
\[g(t):=\frac{\cos(2\pi tH)}{4H}+\sum_{h=1}^H\frac{\sin^2(\pi th)}{h}= \frac{\cos(2\pi tH)}{4H} + \sum_{h=1}^H\frac{1-\cos(2\pi th)}{2h}.\]
Since $g$ is differentiable and
\begin{align*}
g'(t)&=-\frac{\pi}{2}\sin(2\pi tH)+\pi\sum_{h=1}^H\sin(2\pi th)\\
&= \pi \left(-\sin(\pi tH)\cos(\pi tH) + \frac{\sin(\pi tH)\sin(\pi t(H+1))}{\sin(\pi t)}\right) =\pi\frac{\sin^2(\pi tH)\cos(\pi t)}{\sin(\pi t)}\ge 0,
\end{align*}
we deduce that $g$ is increasing on $[0, 1/2]$. This implies that for all $\alpha\in [0, 1/2]$ we have 
$$ \sum_{h=1}^H\frac{\sin^2(\pi\alpha h)}{h}  \leq g(1/2) - \frac{\cos(2\pi \alpha H)}{4H} = \sum_{h=1}^H\frac{1-(-1)^h}{2h} - \frac{\cos(2\pi \alpha H)+1}{4H}\leq \sum_{h=1}^H\frac{1-(-1)^h}{2h},$$
as desired. 
\end{proof}

\begin{proof}[Proof of Proposition \ref{FormulaOddG}] The lower bound follows easily by taking $\alpha=1/2$, $x_h=-1$ if
$h>0$ and $x_h=1$ if $h<0$. Let us now show the upper bound. Let
$\alpha\in\mathbb{R}$ and $(x_h)_{1\le|h|\le H}\in[-1,1]^{2H}$. Put
\[S=\sum_{1\le|h|\le H}\frac{e(\alpha h)-1}{h}x_h\quad\textrm{and}\quad S_0=\sum_{h=1}^H\frac{\sin^2(\pi\alpha h)}{h}.\]
On one hand, we have the relation
\[\im (S)=2\sum_{h=1}^H\sin(\pi\alpha h)\cos(\pi\alpha h)\frac{x_h+x_{-h}}{h}.\]
Moreover, using the Cauchy-Schwarz inequality we get
\[\im (S)^2\le4\Bigl(\sum_{h=1}^H\frac{\sin^2(\pi\alpha h)}{h}|x_h+x_{-h}|\Bigr)\Bigl(\sum_{h=1}^H\frac{\cos^2(\pi\alpha h)}{h}|x_h+x_{-h}|\Bigr).\]
On the other hand, we have
\[\re (S)=2\sum_{h=1}^H\frac{\sin^2(\pi\alpha h)}{h}(x_{-h}-x_h).\]
Observe that $|x_h-x_{-h}|+|x_h+x_{-h}|\le2$. This implies the upper bound
\[|\re (S)|\le4S_0-2\sum_{h=1}^H\frac{\sin^2(\pi\alpha h)}{h}|x_h+x_{-h}|.\]
We are now ready to estimate $|S|^2=\re (S)^2+\im (S)^2$. We infer
\begin{align*}
|S|^2&\le16S_0^2-16S_0\sum_{h=1}^H\frac{\sin^2(\pi\alpha h)}{h}|x_h+x_{-h}|+4\Bigl(\sum_{h=1}^H\frac{\sin^2(\pi\alpha h)}{h}|x_h+x_{-h}|\Bigr)\sum_{h=1}^H\frac{|x_h+x_{-h}|}{h}\\
&\le16S_0^2+8\Bigl(\sum_{h=1}^H\frac{\sin^2(\pi\alpha h)}{h}|x_h+x_{-h}|\Bigr)\Bigl(\sum_{h=1}^H\frac{1}{h}-2S_0\Bigr).\\
\end{align*}
To finish the proof, let us study two cases:
\begin{itemize}
\item[i)] If $\displaystyle2S_0\ge\sum_{h=1}^H\frac{1}{h}$, then $|S|^2\le16S_0^2$
and we conclude by applying Lemma \ref{LemmaUpperSin}.
\item[ii)] If $\displaystyle2S_0\le\sum_{h=1}^H\frac{1}{h}$, then
\[|S|^2\le16S_0^2+16S_0\Bigl(\sum_{h=1}^H\frac{1}{h}-2S_0\Bigr)\le4\Bigl(\sum_{h=1}^H\frac{1}{h}\Bigr)^2\le\Bigl(2\sum_{h=1}^H\frac{1-(-1)^h}{h}\Bigr)^2.\]
\end{itemize}
Whence the result. 
\end{proof}
We end this section by deducing Theorem \ref{AsympG}. 
\begin{proof}[Proof of Theorem \ref{AsympG}]
If $H\geq 1$ is odd, the desired asymptotic follows directly from Proposition \ref{FormulaOddG}, so it only remains to prove the result when $H$ is even. To this end, we observe that if $k\geq 1$ is an integer, then we have
$$ \G(k+1)= \max_{\alpha\in [0,1]} \max_{(x_h)_{1\le|h|\le k+1}\in[-1,1]^{2k+2}}\left|\sum_{1\le|h|\le k+1}\frac{e(\alpha h)-1}{h}x_h\right| \geq \G(k), $$
which follows by taking $x_{k+1}=x_{-k-1}=0$. Hence it follows from Proposition \ref{FormulaOddG} applied to $\G(H-1)$ and $\G(H+1)$, together with the inequality $\G(H-1)\leq \G(H)\leq \G(H+1)$ that 
$$ \G(H)= 2\sum_{h=1}^H\frac{1-(-1)^h}{h} + O\left(\frac{1}{H}\right)=2\log H+2\log 2+2\gamma+O\left(\frac{1}{H}\right),
$$
as desired.
\end{proof}

\begin{rem}
Using the same method of proof as in Proposition \ref{FormulaOddG}, we can in fact obtain the following exact formula for $\G(H)$ when $H$ is even
\[\G(H)=2\sum_{h=1}^H\frac{1}{h}\Bigl(1-(-1)^h\cos\frac{\pi h}{H+1}\Bigr).\]
\end{rem}


\section{Investigating the probabilistic random model\label{ProbabilitySec}}

Let $\{\X(h)\}_{h\in \mathbb{Z}^*} $ be a sequence of independent random variables supported on $[-N, N]$ and satisfying Assumptions 3a and 3b above. In this section we shall study the moments and the moment generating function of the sum of random variables $\sum_{y\le |h|<z} c(h) \X(h)$, where $c(h)$ are certain complex numbers such that $c(h)\ll1/|h|$ for $|h|\geq 1$. 


It follows from the results of section \ref{TraceSection} below that the families of $\ell$-adic trace functions we consider in Corollaries \ref{GBirch}, \ref{GKloo} and \ref{HypKloo} satisfy Assumption 3, where the random variables $\X(h)$ are distributed like the traces of random matrices on the compact classical group $\mathrm{USp}_{2r}$. At the end of this section we will show that these random variables satisfy Assumptions 3a and 3b.

\subsection{The moments of $\sum_{y\le |h|<z} c(h) \X(h)$}
The purpose of this section is to prove the following lemma, which is a generalization of Lemma 3.1 of \cite{La}.
\begin{lem}\label{BOUNDRAND}
Let $\X(h)$ be a sequence of I.I.D. random variables satisfying Assumption 3b above. Let $\{c(h)\}_{h\in \mathbb{Z}^*}$ be a sequence of complex numbers such that $|c(h)|\leq c_0/|h|$, where $c_0$ is a positive constant. Let $1\leq y<z$ be real numbers. Then, for all integers $k\geq 1$ we have 
\begin{equation}\label{LargeY}
\ex\left(\left|\sum_{y\le |h|<z} c(h) \X(h)\right|^k\right) \leq \left(\frac{8(c_0N)^2k}{y}\right)^{k/2}.
\end{equation}
Moreover, if $k>y$ then 
\begin{equation}\label{SmallY}
\ex\left(\left|\sum_{y\le |h|<z} c(h) \X(h)\right|^k\right) \leq (10 c_0N \log k)^k.
\end{equation}
\end{lem}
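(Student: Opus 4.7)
The plan is to use a sub-Gaussian moment bound for even values of $k$, extend to odd $k$ by Jensen's inequality, and then handle the regime $k>y$ via a high/low frequency decomposition. By Assumption 3b with $\ell=0$ the variables $\X(h)$ are mean zero, and since $|\X(h)|\leq N$, Hoeffding's lemma yields $\ex[e^{t\X(h)}]\leq\exp(t^2N^2/2)$ for every real $t$. Writing $c(h)=\re c(h)+i\,\im c(h)$, we decompose $S:=\sum_{y\leq|h|<z}c(h)\X(h)=A+iB$ where $A,B$ are real linear combinations of the $\X(h)$. Independence then gives $\ex[e^{tA}]\leq\exp(t^2\sigma^2/2)$ with $\sigma^2\leq N^2\sum_{y\leq|h|<z}|c(h)|^2$, and the standard passage from the MGF to moments yields $\ex A^{2m}\leq(2m-1)!!\,\sigma^{2m}$, with the analogous bound for $B$.

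For the first bound, I would combine the elementary estimate $\sum_{|h|\geq y}|c(h)|^2\leq 2c_0^2\sum_{h\geq y}h^{-2}\leq 4c_0^2/y$ with the AM--GM inequality $(2m-1)!!\leq m^m$ (which follows from the geometric mean of $1,3,\dots,2m-1$ being at most their arithmetic mean $m$), and with $|S|^{2m}=(A^2+B^2)^m\leq 2^{m-1}(A^{2m}+B^{2m})$. This gives
\[
\ex|S|^{2m}\leq(8(c_0N)^2 m/y)^m=(4(c_0N)^2 k/y)^{k/2}
\]
for even $k=2m$, with a safety factor of $2^{k/2}$ compared with the stated bound. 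For odd $k$, Jensen's inequality $(\ex|S|^k)^{1/k}\leq(\ex|S|^{k+1})^{1/(k+1)}$ yields the same shape of bound with $k+1$ in place of $k$, and since $(k+1)^{k/2}\leq 2^{k/2}k^{k/2}$, the extra factor is absorbed precisely into the constant $8$ that appears in \eqref{LargeY}.

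For the second bound, valid when $k>y$, the first bound degrades rapidly (its right-hand side is of order $k^{k/2}$), so I would split $S=S_1+S_2$ at the cut-off $|h|=k$. The deterministic estimate $|S_1|\leq N\sum_{y\leq|h|\leq k}|c(h)|\leq 2c_0N(1+\log k)$ controls the low frequencies using only $|\X(h)|\leq N$ and $y\geq 1$, while applying the first bound to $S_2$ with $y$ replaced by $k$ gives $\ex|S_2|^k\leq(8(c_0N)^2)^{k/2}=(2\sqrt{2}\,c_0N)^k$. Combining via $|S|^k\leq 2^{k-1}(|S_1|^k+|S_2|^k)$ and observing that $4c_0N\log k$ dominates $2\sqrt{2}\,c_0N$ once $\log k\geq 1$ produces the stated bound $(10c_0N\log k)^k$.

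The main technical obstacle is the precise tracking of the numerical constants, so as to obtain exactly $8$ and $10$ rather than something slightly larger. This forces one to use the sharp form of Hoeffding's inequality (variance proxy $N^2$, not $(2N)^2$), the tight combinatorial bound $(2m-1)!!\leq m^m$, and to handle the odd-$k$ case carefully so that the loss from Jensen exactly fits within the available slack. An alternative direct combinatorial route would expand $\ex|S|^{2m}$ as a sum over $2m$-tuples of indices and invoke Assumption 3b to keep only those tuples in which every index occurs an even number of times; this reproduces the same estimate but requires a more delicate enumeration of the resulting near-pairings.
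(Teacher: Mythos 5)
Your proposal is correct, and for the first inequality it takes a genuinely different route from the paper. The paper proves \eqref{LargeY} for even $k=2n$ by expanding the $2n$-th moment, discarding all tuples in which some index occurs an odd number of times (Assumption 3b), and bounding the surviving multinomial coefficients by $\binom{2n}{2r_1,\dots,2r_\ell}\le(2n)^n\binom{n}{r_1,\dots,r_\ell}$; odd $k$ is then handled by Cauchy--Schwarz, $\ex|S|^k\le(\ex|S|^{2k})^{1/2}$, rather than your (slightly sharper) monotonicity of $L^p$ norms. Your sub-Gaussian route is cleaner conceptually, and all your numerics check out: $(2m-1)!!\le m^m$ by AM--GM, $\sum_{|h|\ge y}|c(h)|^2\le4c_0^2/y$, and the $2^{k/2}$ safety margin comfortably absorbs the Jensen loss $(k+1)^{k/2}\le 2^{k/2}k^{k/2}$. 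The one step you should justify more carefully is the claim that the MGF bound $\ex e^{tA}\le e^{t^2\sigma^2/2}$ yields $\ex A^{2m}\le(2m-1)!!\,\sigma^{2m}$: the genuinely standard passage (Chernoff tail bound plus integration) gives only $\ex A^{2m}\le 2^{m+1}m!\,\sigma^{2m}$, which exceeds $(2m-1)!!\,\sigma^{2m}$ by a factor $\asymp\sqrt{m}$; this is swallowed by your safety factor for $m\ge3$ but not for $m=1,2$, where you must compute directly (trivial, since $\ex A^2=\sum a_h^2\ex\X^2\le\sigma^2$ and $\ex A^4\le3\sigma^4$). To get the exact Gaussian constant $(2m-1)!!$ for all $m$ you should instead dominate term by term by the moments of $\sum a_h g_h$ with $g_h$ i.i.d.\ $N(0,N^2)$, using Assumption 3b to kill all tuples with an odd multiplicity and $\ex\X^{2r}\le N^{2r}\le\ex g^{2r}$ for the rest --- which is essentially the paper's expansion in disguise, and is the combinatorial alternative you sketch in your last sentence. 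For \eqref{SmallY} your argument coincides with the paper's (split at $|h|=k$, deterministic bound on the low frequencies, apply \eqref{LargeY} with cutoff $k$ to the tail), except that the paper combines the two pieces with Minkowski's inequality while you use $|S|^k\le2^{k-1}(|S_1|^k+|S_2|^k)$; your version still lands inside $(10c_0N\log k)^k$ for all $k\ge2$, which is the relevant range since $k>y\ge1$.
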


\begin{proof}
We first prove \eqref{LargeY} when $k=2n$ is even. 
Expanding the moments we obtain
\begin{equation}\label{b1}
\ex\left(\left|\sum_{y\le |h|<z} c(h) \X(h)\right|^{2n}\right)\leq 
c_0^{2n}\sum_{y\le |h_1|, \dots, |h_{2n}|<z} \frac{\left|\ex\left(\X(h_1) \cdots \X(h_{2n})\right)\right|}{|h_{1}\cdots h_{2n}|}. 
\end{equation}
By the independence of the $\{\X(h)\}$ and Assumption 3b, we get
\begin{equation}\label{b2}
\begin{aligned}
&\sum_{y\le |h_1|, \dots, |h_{2n}|<z} \frac{\left|\ex\left(\X(h_1) \cdots \X(h_{2n})\right)\right|}{|h_{1}\cdots h_{2n}|}\\
 &= \sum_{\ell=1}^{2n}\sum_{\substack{j_1<\cdots <j_{\ell}\\ y\le |j_1|, \dots, |j_{\ell}|<z}}\sum_{\substack{n_1, \dots, n_{\ell}\geq 1\\ n_1+\cdots +n_{\ell}=2n}} \binom{2n}{n_1, \dots, n_{\ell}} \frac{|\ex(\X(j_1)^{n_1})| \cdots |\ex(\X(j_{\ell})^{n_{\ell}})|}{|j_{1}^{n_1}\cdots j_{\ell}^{n_{\ell}}|}\\
&\leq  N^{2n}\sum_{\ell=1}^{n}\sum_{\substack{j_1<\cdots <j_{\ell}\\ y\le |j_1|, \dots, |j_{\ell}|<z}}\sum_{\substack{r_1, \dots, r_{\ell}\geq 1\\ r_1+\cdots +r_{\ell}=n}} \binom{2n}{2r_1, \dots, 2r_{\ell}} \frac{1}{j_{1}^{2r_1}\cdots j_{\ell}^{2r_{\ell}}},\\
\end{aligned}
\end{equation}
where we have used that $\ex(\X(h)^m)=0$ if $m$ is odd, and $|\ex(\X(h)^m)|\leq N^m$ if $m$ is even, since $|\X(h)|\leq N$ for all $h$. 
Furthermore, observe that
$$ \binom{2n}{2r_1, \dots, 2r_{\ell}} \leq \frac{2n!}{n!}\binom{n}{r_1, \dots, r_{\ell}}\leq (2n)^n\binom{n}{r_1, \dots, r_{\ell}}.$$
 Inserting this bound in \eqref{b2} gives 
$$ \sum_{y\le |h_1|, \dots, |h_{2n}|<z} \frac{\left|\ex\left(\X(h_1) \cdots \X(h_{2n})\right)\right|}{|h_{1}\cdots h_{2n}|} 
\leq (2N^2n)^n \left(\sum_{y\leq |j|< z} \frac{1}{j^2}\right)^n.
$$
Therefore, in view of \eqref{b1} and the elementary inequality $\sum_{y\leq |j|< z} 1/j^2\leq 4/y$ we deduce that
\begin{equation}\label{EVENBound}
\ex\left(\left|\sum_{y\le |h|<z} c(h) \X(h)\right|^{2n}\right)\leq \left(\frac{8(c_0N)^2n}{y}\right)^{n}.
\end{equation}
We now establish \eqref{LargeY} when $k$ is odd. By the Cauchy-Schwarz inequality and \eqref{EVENBound} we have 
$$ \ex\left(\left|\sum_{y\le |h|<z} c(h) \X(h)\right|^{k}\right)\leq \ex\left(\left|\sum_{y\le |h|<z} c(h) \X(h)\right|^{2k}\right)^{1/2}\leq \left(\frac{8(c_0N)^2k}{y}\right)^{k/2},$$
as desired. 

We now prove \eqref{SmallY}. By \eqref{LargeY} and Minkowski's inequality we have 
\begin{align*}
\ex\left(\left|\sum_{y\le |h|<z} c(h) \X(h)\right|^k\right)^{1/k}
&\leq \ex\left(\left|\sum_{y\le |h| < k} c(h) \X(h)\right|^k\right)^{1/k} +\ex\left(\left|\sum_{k\le |h|<z} c(h) \X(h)\right|^k\right)^{1/k}\\
&\leq c_0 N\sum_{y\le |h|<k} \frac{1}{|h|}+ \sqrt{8}c_0N \leq 10 c_0 N \log k .\\
\end{align*}
This completes the proof.
\end{proof}

\subsection{The moment generating function of a sum involving the $\X(h)$}

In this section we shall estimate the moment generating function of the sum of random variables $\sum_{\substack{-m/2<h\le m/2\\ h\neq 0}} \gamma_m(h) \X(h)$, 
where the $\gamma_{m}(h)$ are defined by 
$$  \gamma_{m}(h)=:-\frac{1}{m} \im \sum_{0\leq n\leq m/2}e_m\left(nh\right).$$ This is in fact
 the probabilistic random model corresponding to the imaginary part of the partial sum $\sum_{0\leq n\leq x} \ph_a(n)$ when $x=m/2$. Indeed,  by \eqref{Planche}  we have
\begin{equation}\label{EqualityB}
\frac{1}{\sqrt{m}}\im \sum_{0\leq n\leq m/2} \ph_a(n)=\sum_{\substack{-m/2<h\le m/2\\ h\neq 0}}\gamma_m(h) \ta(h),
\end{equation}
since $\gamma_m(0)=0$. For $s\in \mathbb{C}$ we define
$$\mathcal{L}_{\X}(s):=\ex\Bigg(\exp\Bigg(s \cdot \sum_{\substack{-m/2<h\le m/2\\ h\neq 0}}\gamma_m(h) \X(h)\Bigg)\Bigg).$$
We prove the following proposition, which generalizes Proposition 3.2 of \cite{La}, and will be used to prove the lower bound of Theorem \ref{Main} in section \ref{LowerSec}.
\begin{pro}\label{AsympLaplace}
Let $m$ be a large integer and $2\leq s\leq m^{1/3}$ be a real number. Then we have
$$
\mathcal{L}_{\X}(s)=\exp\left(\frac{N}{\pi}s\log s+ B_0 s +O\left(\log^2 s\right)\right),
$$
where 
$$
B_0= \frac{N}{\pi}\left( \gamma+\log 2-\log \pi+ \frac{1}{2N} \int_{-\infty}^{\infty} \frac{\f(u)}{u^2}du\right).
$$
\end{pro}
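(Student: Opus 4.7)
The plan is to expand $\log \mathcal{L}_{\X}(s)$ using independence of the $\X(h)$, exploit the explicit form of $\gamma_m(h)$ and the evenness of $t\mapsto\log\ex(e^{t\X})$ provided by Assumption 3b, and then isolate the ``linear'' contribution $N|t|$ from the ``remainder'' $\f(t)$ in the formula $\log\ex(e^{t\X})=\f(t)+N|t|\mathbf{1}_{|t|\ge 1}$. The linear piece produces the leading term $\frac{N}{\pi}s\log s$ together with one half of $B_0s$, while the $\f$-piece, approximated by a Riemann integral, produces the remaining half of $B_0s$.

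\textbf{Step 1: reduction to odd $h$.} By independence,
\[\log\mathcal{L}_{\X}(s)=\sum_{\substack{-m/2<h\le m/2\\ h\ne 0}}\log\ex(e^{s\gamma_m(h)\X}).\]
A direct geometric-sum evaluation of $\sum_{0\le n\le m/2}e_m(nh)$ yields $\gamma_m(h)=-\cot(\pi h/m)/m$ for odd $h$ and $\gamma_m(h)=0$ for even $h\ne 0$. Since Assumption 3b forces $\X$ and $-\X$ to have the same distribution, $t\mapsto\log\ex(e^{t\X})$ is even, and pairing $h$ with $-h$ gives
\[\log\mathcal{L}_{\X}(s)=2\sum_{\substack{h\text{ odd}\\1\le h\le m/2}}\log\ex\bigl(e^{-(s/m)\cot(\pi h/m)\X}\bigr).\]
Using the expansion $\cot(\pi h/m)=m/(\pi h)+O(h/m)$ for $h<m/\pi$, together with the bound $|\log\ex(e^{t\X})-\log\ex(e^{u\X})|\ll|t-u|\min(N,|t|+|u|)$ (which uses $\ex(\X)=0$), the replacement of $-(s/m)\cot(\pi h/m)$ by $-s/(\pi h)$ costs a total of $O(s^2/m)=O(m^{-1/3})$; similarly the $h$ close to $m/2$ contribute only $O(s^2/m)$ on both sides. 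Hence
\[\log\mathcal{L}_{\X}(s)=2\sum_{\substack{h\text{ odd}\\1\le h\le m/2}}\log\ex\bigl(e^{-s\X/(\pi h)}\bigr)+O(1).\]

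\textbf{Step 2: linear part.} Splitting via $\log\ex(e^{t\X})=\f(t)+N|t|\mathbf{1}_{|t|\ge 1}$, the condition $|s/(\pi h)|\ge 1$ becomes $h\le s/\pi$, so the linear piece contributes $\frac{2Ns}{\pi}\sum_{h\text{ odd},\,1\le h\le s/\pi}1/h$. Using the identity $\sum_{h\text{ odd},\,1\le h\le H}1/h=\frac{1}{2}\log H+\frac{\gamma+\log 2}{2}+O(1/H)$ with $H=\lfloor s/\pi\rfloor$ yields the contribution
\[\frac{N}{\pi}s\log s+\frac{Ns}{\pi}(\gamma+\log 2-\log\pi)+O(1).\]

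\textbf{Step 3: the $\f$-part via Riemann sums.} Set $g(x):=\f(-s/(\pi x))$. The substitution $u=s/(\pi x)$, combined with the evenness of $\f$, gives
\[\frac{1}{2}\int_0^\infty g(x)\,dx=\frac{s}{4\pi}\int_{-\infty}^\infty\frac{\f(u)}{u^2}\,du.\]
I claim
\[2\sum_{\substack{h\text{ odd}\\h\ge 1}}g(h)=\int_0^\infty g(x)\,dx+O(\log s).\]
Two features underlie the claim. First, because $\log\ex(e^{t\X})$ is convex in $t$, $g$ is non-decreasing on $(0,s/\pi)$ and non-increasing on $(s/\pi,\infty)$, with a single jump of height $N$ at $x=s/\pi$. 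Second, Laplace's method applied using Assumption 3a yields $\log\ex(e^{t\X})=N|t|-A\log|t|+O(1)$ for $|t|\ge 1$, so $|g(x)|\ll\log(s/x)+1$ on $(0,1]$ and $g(1)=-A\log s+O(1)$. Consequently $\int_0^1|g(x)|\,dx=O(\log s)$, while the total variation of $g$ on $(1,\infty)$ is controlled by $|g(1)|+N+|g(s/\pi^+)|=O(\log s)$. A standard Riemann-sum estimate (spacing $2$) then gives $\sum_{h\text{ odd}}g(h)=\frac{1}{2}\int_1^\infty g(x)\,dx+O(\log s)$, and adding back the $\int_0^1$ piece completes the claim. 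Doubling yields the contribution $\frac{s}{2\pi}\int_{-\infty}^\infty\f(u)/u^2\,du+O(\log s)$. Combining with Step 2 produces the asymptotic with $B_0$ as stated and error $O(\log s)$, which is absorbed into the weaker $O(\log^2 s)$ claimed.

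\textbf{Main obstacle.} The delicate step is Step 3: the Riemann-sum approximation of the $\f$-sum by $\frac{s}{2\pi}\int\f(u)/u^2\,du$ must be shown with an error that does not grow faster than a power of $\log s$, despite (i) the jump of $g$ at $x=s/\pi$, which prevents direct Euler--Maclaurin, and (ii) the logarithmic blow-up $|g(x)|\asymp\log(s/x)$ as $x\to 0^+$, which makes the total variation on all of $(0,\infty)$ divergent. Both are dissolved by the monotonicity of $g$ on each side of the jump (restricting attention to $(1,\infty)$) and by the explicit logarithmic control on $|g|$ from the Laplace asymptotics, which also evaluates the missing $(0,1)$ integral to $O(\log s)$.
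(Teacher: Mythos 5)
Your proposal is correct and follows essentially the same route as the paper's proof: expand $\log\mathcal{L}_{\X}(s)$ by independence, discard the even $h$, split $\log\ex(e^{t\X})$ into $N|t|$ plus $\f(t)$, evaluate the odd harmonic sum for the linear piece, and convert the $\f$-sum into $\frac{s}{2\pi}\int_{-\infty}^{\infty}\f(u)u^{-2}\,du$ via the substitution $u=-s/(\pi h)$ — your bounded-variation/monotonicity treatment of the Riemann sum even sharpens the paper's $O(\log^2 s)$ error to $O(\log s)$. The only slip is minor: the exact evaluations $\gamma_m(h)=-\cot(\pi h/m)/m$ (odd $h$) and $\gamma_m(h)=0$ (even $h\neq 0$) hold only for even $m$, whereas in the applications $m=p$ is odd and each carries an $O(1/m)$ correction; this is harmless, since your own error analysis (or the paper's estimate \eqref{OddEvenGM}) absorbs contributions of that size.
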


To prove this result we need the following lemma.

\begin{lem}\label{LogEx}
Let $\X$ be a random variable with values in $[-N, N]$, such that $\ex(\X)=0$ and $\X$ satisfies Assumption 3a. Let $f_{\X}$ be the function defined in \eqref{Thefunctionf}.  Then we have the following estimates
\begin{equation}\label{LogEx1}
\f(t)\ll \begin{cases} t^2 & \text{ if }  |t|<1,\\ \log(2 |t|)  & \text{ if } |t|\geq 1,\end{cases}
\end{equation}
and 
\begin{equation}\label{LogEx2}
\f'(t)\ll \begin{cases} |t| & \text{ if } |t|<1,\\ \displaystyle{\frac{\log(2|t|)}{|t|}} & \text{ if } |t|> 1.\end{cases}
\end{equation}
\end{lem}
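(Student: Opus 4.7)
My plan is to treat the two regimes $|t|<1$ and $|t|\ge 1$ separately, since Assumption 3a is only needed for large $|t|$.

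In the regime $|t|<1$, I would simply Taylor expand. Since $|\X|\le N$, the map $t\mapsto \ex(e^{t\X})$ is real analytic with uniformly bounded derivatives on $[-1,1]$. Combined with $\ex(\X)=0$ this gives $\ex(e^{t\X})=1+\frac{t^2}{2}\ex(\X^2)+O(t^3)$, whence $\f(t)=\log(1+O(t^2))\ll t^2$. Differentiating, $\f'(t)=\ex(\X e^{t\X})/\ex(e^{t\X})$, and since $\ex(\X e^{t\X})=\ex(\X)+t\ex(\X^2)+O(t^2)=O(|t|)$ while the denominator is bounded below by $e^{-N}$, I would obtain $\f'(t)\ll |t|$.

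For $|t|\ge 1$, by swapping $\X$ with $-\X$ and invoking the second inequality of Assumption 3a, I may assume $t\ge 1$. The upper bound $\log\ex(e^{t\X})\le tN$ is immediate from $\X\le N$, so $\f(t)\le 0$. For the matching lower bound, taking $\varepsilon=1/t$ in Assumption 3a yields $\pr(\X>N-1/t)\gg t^{-A}$, so
\[
\ex(e^{t\X}) \ge e^{t(N-1/t)}\,\pr(\X>N-1/t)\gg e^{tN-1}t^{-A}.
\]
Taking logarithms, $\f(t)\ge -A\log t-O(1)$, which gives $|\f(t)|\ll \log(2t)$ as claimed.

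The real work is the derivative estimate for $t>1$. I would write $\f'(t)=\ex(\X e^{t\X})/\ex(e^{t\X})-N=-\ex((N-\X)e^{t\X})/\ex(e^{t\X})$, substitute $Y:=N-\X\in[0,2N]$ so that the $e^{tN}$ factors cancel, and reduce to showing $\ex(Y e^{-tY})/\ex(e^{-tY})\ll \log(2t)/t$. The numerator is controlled by splitting at a threshold $M>0$,
\[
\ex(Y e^{-tY}) \le M\,\ex(e^{-tY})+2N e^{-tM},
\]
while the denominator satisfies $\ex(e^{-tY})\gg t^{-A}$ by the same tilting trick applied to $Y$, using $\pr(Y<\varepsilon)=\pr(\X>N-\varepsilon)\gg \varepsilon^A$. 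Choosing $M=(A+1)\log(2t)/t$ makes $t^A e^{-tM}\ll 1/t$ and yields $\f'(t)\ll \log(2t)/t$. The delicate point, and the one I expect to be the main obstacle, is precisely this balancing: the tilted measure $e^{-tY}/\ex(e^{-tY})$ concentrates sharply near $Y=0$, so one has to pick $M$ so that the two pieces of the split contribute at the same order $\log(2t)/t$. Everything else reduces to Taylor expansion and a single application of Assumption 3a.
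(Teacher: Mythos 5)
Your proposal is correct and follows essentially the same route as the paper: Taylor expansion for $|t|<1$, the tilting bound $\pr(\X>N-\varepsilon)e^{t(N-\varepsilon)}\le\ex(e^{t\X})\le e^{tN}$ with $\varepsilon\asymp 1/t$ for the size of $\f(t)$, and for $\f'(t)$ a split at a threshold $\asymp(A+1)\log(2t)/t$ — your substitution $Y=N-\X$ is just a cosmetic repackaging of the paper's conditioning on the event $\X>N-\delta$ with the same choice of $\delta$.
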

\begin{proof}[Proof of Lemma \ref{LogEx}]
We start by proving \eqref{LogEx1}. If $|t|\leq 1$, we use the Taylor expansion $\ex(e^{t\X})= \ex(1+t\X+ O(t^2\X^2))= 1+ O(t^2)$ since $\ex(\X)= 0$ and $|\X|\leq N$. This implies the desired estimate for $\f(t)$ when $|t|\leq 1$.

We now suppose that $|t|>1$. We will only prove the result when $t>1$, since the proof in the case $t<-1$ is similar. Let $\varepsilon>0$ be a parameter to be chosen. Then we have
$$ \pr(\X>N-\varepsilon) e^{t(N-\varepsilon)} \leq  \ex(e^{t\X}) \leq e^{tN}.$$
Choosing $\varepsilon=1/(2t)$ and using Assumption 3a we obtain
\begin{equation}\label{MomentGenerating}
\frac{e^{tN}}{(2t)^A} \ll \ex(e^{t\X}) \leq e^{tN},
\end{equation}
from which the desired estimate for $\f(t)$ follows in this case. 

Next, we establish \eqref{LogEx2}. Note that $\f$ is differentiable on $\mathbb{R}\setminus\{-1, 1\}$ and we have 
\begin{equation}\label{Derivf}
\f'(t):=\displaystyle{\frac{\ex (\X e^{t\X})}{\ex (e^{t\X})}}+ \begin{cases} 0& \text{ if } |t| <1,\\  - N & \text{ if } t> 1,\\  N & \text{ if } t< -1.\end{cases}
\end{equation}
As before, in the case $|t|<1$ the estimate of $\f'(t)$ follows from the Taylor expansions $\ex(e^{t\X})=1+O(t^2)$ and $\ex(\X e^{t\X})= \ex(\X+t\X^2+ O(t^2|\X^3|))= t\ex(\X^2)+ O(t^2)$. 

We now suppose that $t>1$, and let $\delta>0$ be a parameter to be chosen. Let $\mathcal{A}$ be the event $\X>N -\delta$, and $ \mathcal{A}^{c}$ be its complement. Then we have 
$$ \ex (\X e^{t\X})= \ex(\mathbf{1}_{\mathcal{A}} \cdot \X e^{t\X}) + \ex(\mathbf{1}_{\mathcal{A}^c} \cdot \X e^{t\X})\geq (N-\delta) \ex(\mathbf{1}_{\mathcal{A}} \cdot  e^{t\X})+ O(e^{t(N-\delta)}).$$
where $\mathbf{1}_{\mathcal{B}}$ denotes the indicator function of an event $\mathcal{B}$. Hence, using that $\ex(\mathbf{1}_{\mathcal{A}^c} \cdot e^{t\X})\leq e^{t(N-\delta)}$ we deduce
\begin{equation}\label{MomentGen2} \ex (\X e^{t\X}) \geq (N-\delta) \ex( e^{t\X})+ O(e^{t(N-\delta)}).
\end{equation}
We choose $\delta= (A+1) (\log 2t)/t$. Then, it follows from \eqref{MomentGenerating} that 
$$ e^{t(N-\delta)} = \frac{e^{tN}}{(2t)^{A+1}}\ll \frac{\ex(e^{t\X})}{t}.$$
Inserting this estimate in \eqref{MomentGen2}, and using the bound $\X\leq N$ gives 
$$ N- C\frac{\log (2t)}{t} \leq \frac{\ex (\X e^{t\X})}{\ex (e^{t\X})} \leq N,$$
for some positive constant $C$ which depends only on $A$. 
This implies the desired estimate for $\f'$ in this case. 
The proof in the case $t<-1$ follows along the same lines. 

\end{proof}
\begin{proof}[Proof of Proposition \ref{AsympLaplace}]
First, note that for $-m/2<h\le m/2$ with $h\neq 0$ we have
\begin{equation}\label{BGM}
|\gamma_m(h)|= \left| \im \left(\frac{e_m\left(h\left(\lfloor \frac{m}{2}\rfloor+1\right)\right)-1}{m\left(e_m\left(h\right)-1\right)}\right)\right|\leq \frac{1}{m|\sin(\pi h/m)|} \leq \frac{1}{2|h|},
\end{equation}
since $\sin(\pi \alpha)\geq 2\alpha$ for $0\leq \alpha\leq 1/2$.
Furthermore, it follows from \eqref{Planche2} that
\begin{equation}\label{OddEvenGM}
\gamma_m(h)=\im \left(\frac{1-e^{\pi i h}}{2\pi i h}\right) +O\left(\frac{1}{m}\right)= \begin{cases} O(\frac1m) & \text{ if } h \text{ is even},\\ -\frac{1}{\pi h}+ O(\frac1m) & \text{ if } h \text{ is odd}.
\end{cases}
\end{equation}
By the independence of the $\X(h)$ we have
$$  \log \mathcal{L}_{\X}(s)= \sum_{\substack{-m/2<h\le m/2\\ h\neq 0}}\log \ex\big(\exp\left(s\cdot\gamma_m(h) \X(h)\right)\big).$$
Using the estimate \eqref{OddEvenGM} and Lemma \ref{LogEx} we obtain
$$\sum_{\substack{-m/2<h\leq m/2\\ h\neq 0 \text{ is even }}}\log \ex\big(\exp\left(s \cdot \gamma_m(h) \X(h)\right)\big) \ll \sum_{\substack{-m/2<h\leq m/2\\ h\neq 0 \text{ is even }}} \frac{s^2}{m^2} \ll \frac{1}{m^{1/3}}. $$
We now restrict ourselves to the case $h= 2k+1$ is odd. First, it follows from \eqref{BGM} and Lemma \ref{LogEx}
that
$$  \sum_{|k|> s^2}  \log \ex\big(\exp\left(s \cdot \gamma_m(2k+1) \X(2k+1)\right)\big) \ll \sum_{|k|> s^2} \frac{s^2}{k^2} \ll 1.$$
Moreover, when $ |k|\le s^2$ we use \eqref{OddEvenGM} to get
$$
 \log \ex\big(\exp\left(s \cdot \gamma_m(2k+1) \X(2k+1)\right)\big)= \log \ex\left(\exp\left(-\frac{s}{(2k+1)\pi}\X(2k+1)\right)\right) +O\left(\frac{s}{m}\right).
$$
 Combining these estimates, and using Lemma \ref{LogEx} we obtain 
 \begin{equation}\label{ParSum}
 \log\mathcal{L}_{\X}(s)
 = \frac{2N}{\pi} s \sum_{1\leq 2k+1\leq s/\pi}\frac{1}{2k+1} + \sum_{-s^2\leq k\leq s^2} \f\left(-\frac{s}{(2 k+1)\pi}\right) +O(1).
\end{equation}
Next, we observe that
$$ \sum_{1\leq 2k+1\leq s/\pi}\frac{1}{2k+1}= \frac{1}{2}\sum_{1\leq k\leq s/2\pi}\frac{1}{k} +\log 2+O\left(\frac{1}{s}\right)= \frac{\log s}{2} + \frac{1}{2} \left( \gamma+ \log 2- \log \pi\right)+ O\left(\frac{1}{s}\right).
$$
Furthermore, by partial summation and Lemma \ref{LogEx} we get
\begin{align*}
\sum_{-s^2\leq k\leq s^2} \f\left(-\frac{s}{(2 k+1)\pi}\right) =  \int_{-s^2}^{-1}\f\left(-\frac{s}{(2 u+1)\pi}\right)du + \int_{0}^{s^2}\f\left(-\frac{s}{(2 u+1)\pi}\right)du+ O(\log^2 s). 
\end{align*}
Finally, making the change of variables $v=-s/((2 u+1)\pi)$, the main term on the right hand side of this estimate becomes
$$
 \frac{s}{2\pi} \int_{-s/\pi}^{-s/((2s^2+1)\pi)} \frac{\f(v)}{v^2}dv + \frac{s}{2\pi} \int_{s/((2s^2-1)\pi)}^{s/\pi} \frac{\f(v)}{v^2}dv = \frac{s}{2\pi}\int_{-\infty}^{\infty} \frac{\f(v)}{v^2}dv +O(\log s),
$$
by Lemma \ref{LogEx}. Inserting these estimates in \eqref{ParSum} completes the proof.
\end{proof}
\subsection{The distribution of traces of random matrices in the classical group $\mathrm{USp}_{2n}$}

Fix a positive integer $n$ and put $N=2n$. Let us endow the unitary symplectic group $G=\mathrm{USp}_{2n}$  with its Haar measure $\mu$ (which is normalized throughout), and consider
the random variable $\X:G\rightarrow[-N,N]$ that maps $M$ to $\Tr M$. Assumption 3b is
easy to check for $\X$. Indeed, let $\ell$ be an odd positive integer.
Observing that $-I_{2n}\in\mathrm{USp}_{2n}$ we get
\[\int_G(\Tr M)^\ell\mathrm{d}\mu(M)=\int_G(\Tr(-M))^\ell\mathrm{d}\mu(M)=-\int_G(\Tr M)^\ell\mathrm{d}\mu(M),\]
whence $\ex(\X^\ell)=0$. In the same way, we have
$\pr(\X>N-\varepsilon)=\pr(\X<-N+\varepsilon)$ for every $\varepsilon>0$. Let us now verify Assumption 3a.

\begin{lem}\label{traceUSp}
There exists a positive real number $c_n$ such that for every
$\varepsilon\in(0,2]$, one has
$\pr(\X>N-\varepsilon)\ge c_n\varepsilon^{n(2n+1)/2}$.
\end{lem}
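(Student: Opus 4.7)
My plan is to work in exponential coordinates near the identity $I_{2n}$, which is the unique global maximizer of $\Tr$ on $G=\mathrm{USp}_{2n}$. Let $\mathfrak{g}=\mathfrak{usp}(2n)$ denote the Lie algebra of $G$, a real vector space of dimension $d=n(2n+1)$ whose elements $X$ are both skew-Hermitian and symplectic. Two elementary identities will drive the argument: $\Tr X=0$ (since $\mathfrak{sp}_{2n}(\mathbb{C})\subset\mathfrak{sl}_{2n}(\mathbb{C})$), and $\Tr(X^{2})=-\Tr(XX^{*})=-\|X\|_{F}^{2}$ (since $X^{*}=-X$), where $\|\cdot\|_{F}$ denotes the Frobenius norm. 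In particular, $-\Tr(X^{2})$ is a positive definite quadratic form on $\mathfrak{g}$, which is really what makes the estimate work.

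Expanding the matrix exponential I would obtain
\[
\Tr\exp(X)=2n-\tfrac{1}{2}\|X\|_{F}^{2}+O(\|X\|_{F}^{3}),
\]
from which I would extract a radius $r_{0}>0$ depending only on $n$ such that $\Tr\exp(X)>2n-\|X\|_{F}^{2}$ whenever $0<\|X\|_{F}\leq r_{0}$. For every $\varepsilon\in(0,r_{0}^{2}]$, this means that $\exp$ maps the Euclidean ball $B_{\sqrt{\varepsilon}}=\{X\in\mathfrak{g}:\|X\|_{F}<\sqrt{\varepsilon}\}$ inside $\{M\in G:\Tr M>N-\varepsilon\}$. To convert this into a lower bound for $\mu$, I would use that $\exp$ is a local diffeomorphism at $0$ whose Jacobian (relative to Frobenius-orthonormal coordinates on $\mathfrak{g}$ and Haar measure on $G$) is continuous and strictly positive at $0$, hence bounded below on $B_{r_{0}}$ by some constant $\kappa_{n}>0$. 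Since the Lebesgue volume of $B_{\sqrt{\varepsilon}}$ equals $v_{d}\varepsilon^{d/2}$, this yields
\[
\pr(\X>N-\varepsilon)\geq\kappa_{n}v_{d}\,\varepsilon^{n(2n+1)/2}
\]
for all $\varepsilon\in(0,r_{0}^{2}]$. The remaining range $\varepsilon\in[r_{0}^{2},2]$ is trivial, since $\pr(\X>N-\varepsilon)$ is non-decreasing in $\varepsilon$ while $\varepsilon^{n(2n+1)/2}$ is bounded above, so shrinking $c_{n}$ subsumes both regimes.

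The only genuine obstacle, and the step I would write most carefully, is the comparison between Haar measure on $G$ and Lebesgue measure on $\mathfrak{g}$ via $\exp$; this is standard but relies on the classical formula for the Jacobian of the exponential map, $\det\bigl((1-e^{-\mathrm{ad}\,X})/\mathrm{ad}\,X\bigr)$, which is continuous and equals $1$ at $X=0$. An alternative avoiding this technicality would be to apply Weyl's integration formula for $\mathrm{USp}_{2n}$ in the eigenangles $\theta_{1},\dots,\theta_{n}$: one writes $\Tr M=2\sum_{j}\cos\theta_{j}$ and observes that the Weyl density $\prod_{j}\sin^{2}\theta_{j}\prod_{j<k}(\cos\theta_{k}-\cos\theta_{j})^{2}$ has total degree $2n^{2}$ in the $\theta_{j}$ near the origin. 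Rescaling $\theta_{j}=\sqrt{\varepsilon}\,\phi_{j}$ over the region $\sum\theta_{j}^{2}<c\varepsilon$ then produces exactly the exponent $n^{2}+n/2=n(2n+1)/2$, matching the claim.
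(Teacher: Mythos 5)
Your proposal is correct, but your primary argument proves the lemma by a genuinely different route from the paper. The paper works entirely on the maximal torus: it applies the Weyl integration formula for $\mathrm{USp}_{2n}$, rewrites the condition $\Tr M>N-\varepsilon$ as $\sum_h\sin^2(\theta_h/2)<\varepsilon/4$, expresses the Weyl density in terms of $\sin^2(\theta_h/2)$ and $\cos^2(\theta_h/2)$, and performs the exact change of variables $t_h=\tfrac{2}{\sqrt{\varepsilon}}\sin(\theta_h/2)$, which extracts the factor $\varepsilon^{n(2n+1)/2}$ precisely and yields an explicit constant (times the harmless factor $\sqrt{1-\varepsilon/4}$). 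This is in effect a rigorous version of the ``alternative'' you sketch in your last paragraph, with the rescaling done exactly rather than asymptotically. Your main argument is instead a soft Lie-theoretic one: the Hessian of $\Tr\exp$ at $0\in\mathfrak{usp}(2n)$ is the negative definite form $-\tfrac12\|X\|_F^2$, so the superlevel set contains the exponential image of a ball of radius comparable to $\sqrt{\varepsilon}$, whose Haar measure is comparable to $\varepsilon^{\dim G/2}$. This buys generality --- the same reasoning gives the exponent $\dim G/2$ for the trace of the defining representation of any compact classical group without computing a Weyl density --- at the cost of an explicit constant and of the comparison between the pushforward of Lebesgue measure under $\exp$ and Haar measure, which you rightly flag as the one delicate step; you should also record explicitly that $\exp$ is injective on $B_{r_0}$ for $r_0$ small, since a lower bound on the measure of the \emph{image} (not merely on the integral of the Jacobian) is what the argument requires. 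Both proofs produce the claimed exponent $n(2n+1)/2=\tfrac12\dim\mathrm{USp}_{2n}$.
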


\begin{proof}
Put
\[\mathcal{A}=\Bigl\{\underline{\theta}=(\theta_1,\cdots,\theta_n)\in[0,\pi]^n\ \Big|\ \sum_{h=1}^n2\cos\theta_h>N-\varepsilon\Bigr\}=\Bigl\{\underline{\theta}\in[0,\pi]^n\ \Big|\ \sum_{h=1}^n\sin^2\frac{\theta_h}{2}<\frac{\varepsilon}{4}\Bigr\}\]
and $\mathcal{B}=\{\underline{t}\in\mathbb{R}_+^n\ |\ t_1^2+\cdots+t_n^2<1\}$.
The Weyl integration formula (see for example page 117 of \cite{Co}) gives
\[\pr(\X>2n-\varepsilon)=\frac{2^{n^2}}{n!\pi^n}\int_\mathcal{A}\prod_{1\le j<k\le n}(\cos\theta_k-\cos\theta_j)^2\prod_{h=1}^n\sin^2\theta_h\mathrm{d}\theta_1\cdots\mathrm{d}\theta_n.\]
Let us remark that if $\underline{\theta}\in\mathcal{A}$ then
\[\prod_{h=1}^n\cos^2\frac{\theta_h}{2}=\prod_{h=1}^n\Bigl(1-\sin^2\frac{\theta_h}{2}\Bigr)\ge1-\sum_{h=1}^n\sin^2\frac{\theta_h}{2}>1-\frac{\varepsilon}{4}.\]
We infer that
\begin{align*}
\pr(\X>N-\varepsilon)&=\frac{2^{n(2n+1)}}{n!\pi^n}\int_\mathcal{A}\prod_{j<k}\Bigl(\sin^2\frac{\theta_j}{2}-\sin^2\frac{\theta_k}{2}\Bigr)^2\prod_{h=1}^n\sin^2\frac{\theta_h}{2}\cos^2\frac{\theta_h}{2}\mathrm{d}\theta_1\cdots\mathrm{d}\theta_n\\
&\ge\frac{2^{n(2n+1)}}{n!\pi^n}\sqrt{1-\frac{\varepsilon}{4}}\int_\mathcal{A}\prod_{j<k}\Bigl(\sin^2\frac{\theta_j}{2}-\sin^2\frac{\theta_k}{2}\Bigr)^2\prod_{h=1}^n\sin^2\frac{\theta_h}{2}\cos\frac{\theta_h}{2}\mathrm{d}\theta_1\cdots\mathrm{d}\theta_n\\
&=\frac{2^n\varepsilon^{n(2n+1)/2}}{n!\pi^n}\sqrt{1-\frac{\varepsilon}{4}}\int_\mathcal{B}\prod_{1\le j<k\le n}(t_j^2-t_k^2)^2\prod_{h=1}^nt_h^2\mathrm{d}t_1\cdots\mathrm{d}t_n,\\
\end{align*}
where we use the change of variables
$\displaystyle t_h=\frac{2}{\sqrt{\varepsilon}}\sin\frac{\theta_h}{2}$. Whence
the result.
\end{proof}

\section{Completing the proof of the upper bound in Theorem \ref{Main}: Proof of Theorem \ref{MomentsMaxTail}\label{MomentsSec}}
In this section, we assume that $\mathcal{F}=\{\ph_a\}_{a\in \Omega_m}$ is a family of $m$-periodic complex valued functions satisfying Assumptions 2 and 3. We start by proving the following lemma which follows from combining Assumption 3 with Lemma \ref{BOUNDRAND}.
\begin{lem}\label{BoundMomentsyz}
Let $m$ be a large positive integer, and $1\leq y <z\leq m/2$ be real numbers. Let $\{\ph_a\}_{a\in \Omega_m}$ be a family of $m$-periodic complex valued functions satisfying Assumptions 2 and 3. Let $\{c(h)\}_{h\in \mathbb{Z}^*}$ be a sequence of complex numbers such that $|c(h)|\leq c_0/|h|$, where $c_0$ is a positive constant. Then, for all positive integers  $k\leq (\log m)/(5\log\log m)$ we have   
$$
\frac{1}{|\Omega_m|}\sum_{a\in \Omega_m}\left|\sum_{y\le |h|<z} c(h) \ta(h)\right|^{2k} \ll \left(\frac{16(c_0N)^2k}{y}\right)^{k}+\frac{(4C_1c_0\log m)^{2k}}{m^{\eta}}.
$$
\end{lem}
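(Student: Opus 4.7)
The plan is to expand the $2k$-th power of the absolute value and then apply Assumption 3 term by term, after which the main term is handled by Lemma \ref{BOUNDRAND} applied to the associated random sum.

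\medskip

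\textbf{Step 1: Expand and separate main/error contributions.} Since the Fourier transform $\widehat{\ph_a}$ is real-valued by Assumption 3, we can write
\[
\left|\sum_{y\le |h|<z} c(h)\ta(h)\right|^{2k} = \sum_{\substack{h_1,\dots,h_{2k} \\ y\le |h_i|<z}} c(h_1)\cdots c(h_k)\overline{c(h_{k+1})}\cdots\overline{c(h_{2k})}\,\ta(h_1)\cdots\ta(h_{2k}).
\]
Averaging over $a\in\Omega_m$ and applying Assumption 3 to each $2k$-tuple (note that $2k\le 2(\log m)/(5\log\log m) \le (\log m)/(\log\log m)$ so the hypothesis is satisfied, and every $h_i\neq 0$ since $y\geq 1$), we get
\[
\frac{1}{|\Omega_m|}\sum_a \ta(h_1)\cdots\ta(h_{2k}) = \ex(\X(h_1)\cdots\X(h_{2k})) + O\!\left(\frac{C_1^{2k}}{m^{\eta}}\right).
\]

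\textbf{Step 2: Handle the main term via Lemma \ref{BOUNDRAND}.} Summing the main contribution back up reassembles the expected value of the corresponding random quantity:
\[
\sum_{\substack{h_1,\dots,h_{2k}}} c(h_1)\cdots c(h_k)\overline{c(h_{k+1})}\cdots\overline{c(h_{2k})}\,\ex(\X(h_1)\cdots\X(h_{2k})) = \ex\!\left(\left|\sum_{y\le |h|<z} c(h)\X(h)\right|^{2k}\right).
\]
By Lemma \ref{BOUNDRAND} applied with exponent $2k$, this is bounded by
\[
\left(\frac{8(c_0N)^2(2k)}{y}\right)^k = \left(\frac{16(c_0N)^2 k}{y}\right)^k,
\]
which is precisely the first term of the claimed bound.

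\medskip

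\textbf{Step 3: Bound the error contribution.} The total error incurred by replacing each mixed moment by its expectation is at most
\[
\frac{C_1^{2k}}{m^{\eta}}\left(\sum_{y\le |h|<z}|c(h)|\right)^{2k} \ll \frac{C_1^{2k}}{m^{\eta}}\left(2c_0\sum_{1\le h\le m/2}\frac{1}{h}\right)^{2k} \ll \frac{(4C_1c_0\log m)^{2k}}{m^{\eta}},
\]
using $|c(h)|\le c_0/|h|$ and the harmonic sum estimate (absorbing constants into the factor $4$ for $m$ large). Combining Steps 2 and 3 gives the lemma.

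\medskip

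This argument is essentially a clean bookkeeping exercise; the only point requiring care is ensuring that $2k$ falls within the range where Assumption 3 is available, which is why the hypothesis $k\leq (\log m)/(5\log\log m)$ appears. There is no serious obstacle beyond this verification and the elementary $L^1$-bound on $\{c(h)\}$.
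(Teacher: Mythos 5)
Your proof is correct and follows essentially the same route as the paper's: expand the $2k$-th moment, apply Assumption~3 to each $2k$-fold product of Fourier values (the range check $2k\le \log m/\log\log m$ is exactly the point the paper also relies on), reassemble the main term as $\ex\bigl(|\sum_{y\le|h|<z}c(h)\X(h)|^{2k}\bigr)$ and invoke Lemma~\ref{BOUNDRAND}, and bound the accumulated error by the $L^1$-norm of $\{c(h)\}$. The only cosmetic slip is attributing the real-valuedness of $\ta$ to Assumption~3 rather than to Assumption~2 (which places $\ta(h)$ in $[-N,N]$); this does not affect the argument.
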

\begin{proof} Expanding the moments and using Assumptions 2 and 3, we obtain
\begin{align*}
&\frac{1}{|\Omega_m|}\sum_{a\in \Omega_m}\left|\sum_{y\le |h|<z} c(h) \ta(h)\right|^{2k}\\
&= \sum_{\substack{y\le |h_1|, \dots, |h_k|<z\\ y\le |r_1|, \dots, |r_k|<z}} c(h_1)\cdots c(h_k) \overline{c(r_1)\cdots c(r_k)} \ \frac{1}{|\Omega_m|}\sum_{a\in \Omega_m}\prod_{u=1}^k\ta(h_u)\prod_{v=1}^{k}\ta(r_v)\\
&=\sum_{\substack{y\le |h_1|, \dots, |h_k|<z\\ y\le |r_1|, \dots, |r_k|<z}} c(h_1)\cdots c(h_k) \overline{c(r_1)\cdots c(r_k)} \ \ex\left(\prod_{u=1}^k \X(h_{u})\prod_{v=1}^{k}\X(r_v)\right) + E_{k}(y, z),
\end{align*}
where the error term satisfies
$$ E_{k}(y, z) \ll \frac{C_1^{2k}}{m^{\eta}} \Big(\sum_{y\leq |h|<z} |c(h)|\Big)^{2k} \leq \frac{C_1^{2k}}{m^{\eta}} \Big(\sum_{y\leq |h|<z} \frac{c_0}{|h|}\Big)^{2k} \ll \frac{(4C_1c_0\log m)^{2k}}{m^{\eta}},$$
by Assumption 3. 
The result follows upon noting that
\begin{align*}
&\sum_{\substack{y\le |h_1|, \dots, |h_k|<z\\ y\le |r_1|, \dots, |r_k|<z}} c(h_1)\cdots c(h_k) \overline{c(r_1)\cdots c(r_k)} \ \ex\left(\prod_{u=1}^k \X(h_{u})\prod_{v=1}^{k}\X(r_v)\right)\\
&=\ex\left(\left|\sum_{y\le |h|<z} c(h) \X(h)\right|^{2k}\right) \leq \left(\frac{16(c_0N)^2k}{y}\right)^{k},
\end{align*}
by Lemma \ref{BOUNDRAND}. 
\end{proof}

We will deduce Theorem \ref{MomentsMaxTail} from the following results, which generalize Propositions 6.1 and 6.2 of \cite{La}. In both results we assume that $\{\ph_a\}_{a\in \Omega_m}$ is a family of $m$-periodic complex valued functions satisfying Assumptions 2 and 3. 
\begin{pro}\label{MomentsMaxTail1}
Let $m$ be large, and $k$ be an integer such that $10^5N^2<k\leq (\log m)/(5\log\log m)$. Let $S$ be a non-empty subset of $[0, 1)$, and put $y=10^5 N^2 k$.  Then we have 
$$ 
\frac{1}{|\Omega_m|}\sum_{a\in \Omega_m} \max_{\alpha\in S}\left|\sum_{y\le |h|<k^2} \frac{e\left(\alpha h\right)- 1}{h} \ta(h)\right|^{2k} \ll e^{-4k}.
$$
\end{pro}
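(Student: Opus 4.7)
My plan is to control $F_a(\alpha) := \sum_{y \leq |h| < k^2} \frac{e(\alpha h) - 1}{h} \widehat{\ph_a}(h)$ uniformly in $\alpha$ by its values on a short explicit grid, and then invoke Lemma \ref{BoundMomentsyz} pointwise. Since $F_a$ is a trigonometric polynomial of degree at most $k^2$ whose coefficients are bounded by $2N/|h|$ (by Assumption 2), the elementary estimate $|(e(\alpha h) - e(t h))/h| \leq 2\pi|\alpha - t|$ yields the Lipschitz bound $|F_a(\alpha) - F_a(t)| \leq 4\pi N k^2 |\alpha - t|$. Taking $T := \{j/K : 0 \leq j < K\}$ with $K := \lceil 10 N e^{5} k^2 \rceil$ ensures that every $\alpha \in [0,1)$ lies within distance $1/K$ of some $t \in T$, so that $|F_a(\alpha) - F_a(t)| \leq e^{-5}/2$. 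Combined with the elementary inequality $(u+v)^{2k} \leq 2^{2k}(u^{2k}+v^{2k})$ and $\max_{t \in T} \leq \sum_{t \in T}$, this gives
\[\max_{\alpha \in S}|F_a(\alpha)|^{2k} \leq 4^k \sum_{t \in T}|F_a(t)|^{2k} + e^{-10k}.\]

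For each fixed $t \in T$, I would then invoke Lemma \ref{BoundMomentsyz} with the coefficients $c(h) := (e(t h) - 1)/h$, for which $c_0 = 2$ is admissible. With $y = 10^5 N^2 k$, this yields
\[\frac{1}{|\Omega_m|}\sum_{a\in \Omega_m}|F_a(t)|^{2k} \ll \left(\frac{16(2N)^2 k}{10^5 N^2 k}\right)^{k} + \frac{(8 C_1 \log m)^{2k}}{m^{\eta}} \leq (6.4 \cdot 10^{-4})^k + m^{-\eta + 2/5 + o(1)},\]
where the last step uses that the constraint $k \leq (\log m)/(5 \log\log m)$ forces $(8 C_1 \log m)^{2k} \leq m^{2/5 + o(1)}$. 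For $m$ sufficiently large (explicitly, once $\log\log m \geq 8$) we have $e^{-4k} \geq m^{-4/(5\log\log m)} \geq m^{-1/10}$, so the second term is swallowed by the desired bound even after summing over the $K \asymp k^2$ points. Summing the first term over $T$ and multiplying by $4^k$ gives a contribution of order $k^2 \cdot (4 \cdot 6.4 \cdot 10^{-4})^k = k^2 (2.56 \cdot 10^{-3})^k$, and since $2.56 \cdot 10^{-3} < e^{-5.9}$, this is $\ll k^2 e^{-5.9 k} \ll e^{-4k}$, as required.

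The whole argument is essentially arithmetical, and the one delicate point is the calibration of constants: the factor $10^5$ built into $y$ is precisely what forces the per-point estimate from Lemma \ref{BoundMomentsyz} to decay faster than $4^{-k}$, letting it absorb both the $2^{2k}$ loss from the $L^\infty$-to-$L^2$ passage and the union-bound factor $K \asymp k^2$ with comfortable margin. No sharper input is needed (in particular, no Bernstein-type inequality): the trivial Lipschitz bound on the band-limited polynomial $F_a$ already suffices to decouple the variable $\alpha$ at essentially no cost.
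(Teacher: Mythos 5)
Your proposal is correct and follows essentially the same route as the paper: discretize $\alpha$ to a grid of polynomially many points using the Lipschitz bound $|e(\alpha h)-e(th)|/|h|\le 2\pi|\alpha-t|$, pass from the maximum to a union bound over the grid via $(u+v)^{2k}\le 2^{2k}(u^{2k}+v^{2k})$, and apply Lemma \ref{BoundMomentsyz} pointwise, with the factor $10^5$ in $y$ absorbing the $4^k$ loss and the grid cardinality (the paper uses a grid of $k^4$ points with discretization error $O(k^{-2})$ rather than your $\asymp k^2$ points with error a small absolute constant, an immaterial difference). The only blemish is arithmetic: with $K=\lceil 10Ne^5k^2\rceil$ your Lipschitz bound gives $4\pi N k^2/K\le 2\pi e^{-5}/5\approx 1.26\,e^{-5}$ rather than $e^{-5}/2$, but the final estimate still closes with ample margin.
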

\begin{pro}\label{MomentsMaxTail2}
Let $m$ be a large positive integer, and $k$ be an integer such that $3\leq k\leq (\log m)/(50\log\log m)$. Let $S$ be a non-empty finite subset of $[0, 1)$. 
Then we have $$ 
\frac{1}{|\Omega_m|}\sum_{a\in \Omega_m} \max_{\alpha\in S}\left|\sum_{k^2 \le |h|< m/2} \frac{e\left(\alpha h\right)- 1}{h} \ta(h)\right|^{2k} \ll e^{-4k} + \frac{|S|(2C_1\log m)^{8k}}{m^{\eta}}.
$$
\end{pro}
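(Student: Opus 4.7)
Set $T_a(\alpha) := \sum_{k^2 \le |h| < m/2}\frac{e(\alpha h)-1}{h}\ta(h)$, whose coefficients $c_h(\alpha)=(e(\alpha h)-1)/h$ satisfy $|c_h(\alpha)|\le 2/|h|$, so Lemma \ref{BoundMomentsyz} applies with $c_0=2$. The distinctive feature of the target bound — that the first term $e^{-4k}$ carries no $|S|$-factor while the second does — shows that a direct union bound over $\alpha\in S$ cannot suffice. The plan is to combine a dyadic chaining argument for the probabilistic main contribution with a union bound only for the arithmetic-error tail coming from Assumption 3.

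For the chaining step, decompose $T_a = \sum_{j=0}^{J}T_a^{(j)}$ into dyadic blocks, where $T_a^{(j)}$ isolates the frequencies $|h|\in[2^j k^2,\, 2^{j+1}k^2)$ and $J\asymp\log(m/k^2)$. Each $T_a^{(j)}$ is a trigonometric polynomial in $\alpha$ of degree at most $2^{j+1}k^2$, so by Bernstein's inequality $\sup_{\alpha\in[0,1)}|T_a^{(j)}(\alpha)|$ is comparable (up to a factor of $2$) to its maximum over a discrete grid $G_j\subset[0,1)$ of cardinality $|G_j|\ll 2^j k^2$. A union bound over $G_j$ combined with Lemma \ref{BoundMomentsyz} at $y=2^j k^2$ yields
$$\frac{1}{|\Omega_m|}\sum_{a\in\Omega_m}\max_{\alpha\in G_j}|T_a^{(j)}(\alpha)|^{2k}\ll 2^j k^2\cdot\bigg[\Big(\frac{CN^2}{2^j k}\Big)^{k}+\frac{(C'C_1\log m)^{2k}}{m^\eta}\bigg]$$
for absolute constants $C,C'$. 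Taking $2k$-th roots and summing via Minkowski's inequality $\|\sum_j X_j\|_{2k}\le\sum_j\|X_j\|_{2k}$, the factor $|G_j|^{1/(2k)}=(2^j k^2)^{1/(2k)}$ is dominated by the geometric decay $2^{-j/2}$ (for $k\ge 2$), so the series converges and yields an overall bound of shape $(C_2N^2/k)^k$. This is $\ll e^{-4k}$ once $k$ is large enough, and is absorbed by the implicit constant (depending on $N$, $C_1$, $\eta$) for small $k$; crucially, it is free of any $|S|$-factor because Bernstein's inequality controls the full supremum over $[0,1)$, which contains $S$.

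For the arithmetic-error contribution — which is too uniform in $\alpha$ to benefit from chaining — I would apply the direct union bound $\max_{\alpha\in S}|\cdot|^{2k}\le\sum_{\alpha\in S}|\cdot|^{2k}$. To match the precise shape $|S|(2C_1\log m)^{8k}/m^\eta$, I would invoke Lemma \ref{BoundMomentsyz} at moment order $2\ell = 8k$ (permissible since $\ell = 4k\le(\log m)/(12.5\log\log m)$ lies within the lemma's allowed range) and descend back to the $2k$-th moment via Jensen's inequality $E[X^{2k}]\le E[X^{8k}]^{1/4}$, using the subadditivity $(u+v)^{1/4}\le u^{1/4}+v^{1/4}$. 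The main obstacle is the bookkeeping: controlling the Minkowski losses across the $O(\log m)$ dyadic scales and ensuring that the interplay between the $2k$-th moment computation (driving the $e^{-4k}$ term via chaining) and the $8k$-th moment computation (driving the $|S|(2C_1\log m)^{8k}/m^\eta$ term via union bound) does not degrade either estimate.
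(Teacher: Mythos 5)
Your dyadic decomposition and your diagnosis that a naive union bound over $S$ fails for the $e^{-4k}$ term are both correct, and the Bernstein-grid idea would indeed handle the \emph{probabilistic} part of the estimate. The genuine gap is in the error term coming from Assumption 3. When you union-bound over the Bernstein grid $G_j$ with $|G_j|\asymp 2^jk^2$ and invoke Lemma \ref{BoundMomentsyz}, you necessarily pick up $|G_j|\cdot(C'C_1\log m)^{2k}/m^{\eta}$ from each block. For the top scales $2^jk^2\asymp m$ this contributes $\asymp m^{1-\eta}(C'C_1\log m)^{2k}$, which is useless precisely in the cases the theorem is built for ($\eta\leq 1$, and in particular $\eta=1/2$ for all the trace-function families). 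Your proposed remedy --- chaining for the ``probabilistic main contribution'' and a union bound over $S$ for the ``arithmetic-error tail'' --- is not available as stated: the error in Assumption 3 is not a function of $\alpha$ that can be split off from $T_a(\alpha)$ and discretized differently. It only materializes after averaging $\prod_i\widehat{\ph_a}(h_i)$ over $a\in\Omega_m$ for a \emph{fixed} frequency tuple, so whatever set of $\alpha$'s you union-bound over, its cardinality multiplies \emph{both} the probabilistic term and the $m^{-\eta}$ term.

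The paper's resolution is to cap the size of the discretization at $\min(4^j,|S|)$: if $4^j\geq|S|$ one union-bounds directly over $S$ (no grid needed), and if $4^j<|S|$ one uses the grid $\mathcal{B}_j=\{b/4^j\}$ of $4^j$ points, accepting an approximation error $O(2^{-j})$ from replacing $e(\alpha h)$ by $e(\beta_\alpha h)$. Either way the number of points is simultaneously $\leq 4^j$ (so $4^j(CN^2k/2^j)^k$ still decays geometrically for $k\geq 3$) and $\leq|S|$ (so the Assumption-3 error stays $\leq|S|(C\log m)^{2k}/m^{\eta}$). If you replace your Bernstein grids by this truncated discretization, the rest of your argument goes through; the recombination of the dyadic blocks can be done by Minkowski as you propose (the paper uses H\"older with weights $1/j^2$, which is where the generous exponent $8k$ comes from), and the detour through $8k$-th moments and Jensen is unnecessary.
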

We start by proving Proposition \ref{MomentsMaxTail1}, as its proof is simpler since the inner sum over $|h|$ is short.
\begin{proof}[Proof of  Proposition \ref{MomentsMaxTail1}] 
Let
$\mathcal{A}_k= \{b/k^4 : 1\leq b\leq k^4\}.$ Then for all $\alpha \in S$, there exists $\beta_{\alpha}\in \mathcal{A}_k$ such that 
$|\alpha-\beta_{\alpha}|\leq 1/k^4$. In this case we have $e(\alpha h)= e(\beta_{\alpha} h)+ O(h/k^4),$
 and hence 
$$\sum_{y\le |h|<k^2} \frac{e\left(\alpha h\right)- 1}{h} \ta(h)= \sum_{y\le |h|<k^2} \frac{e(\beta_{\alpha} h)- 1}{h} \ta(h) +O\left(\frac{1}{k^2}\right). 
 $$
 Therefore, using the elementary inequality $|x+y|^{2k}\leq 2^{2k} (|x|^{2k}+|y|^{2k})$ we deduce that
 \begin{equation}\label{Transition}
\begin{aligned}
&\max_{\alpha\in S}\left|\sum_{y\le |h|<k^2} \frac{e\left(\alpha h\right)- 1}{h} \ta(h)\right|^{2k}\leq 2^{2k} \max_{\alpha\in \mathcal{A}_k}\left|\sum_{y\le |h|<k^2} \frac{e\left(\alpha h\right)- 1}{h} \ta(h)\right|^{2k}+ \left(\frac{c_1}{k^2}\right)^{2k}
\\
& \leq 2^{2k} \sum_{\alpha\in \mathcal{A}_k}\left|\sum_{y\le |h|<k^2} \frac{e\left(\alpha h\right)- 1}{h} \ta(h)\right|^{2k}+ \left(\frac{c_1}{k^2}\right)^{2k},
 \end{aligned}
 \end{equation}
 for some positive constant $c_1$. Thus, it follows from Lemma \ref{BoundMomentsyz}  that in this case we have
 \begin{equation}\label{SecondCaseBound}
\begin{aligned}
&\frac{1}{|\Omega_m|}\sum_{a\in \Omega_m} \max_{\alpha\in S}\left|\sum_{y\le |h|<k^2} \frac{e\left(\alpha h\right)- 1}{h} \ta(h)\right|^{2k}\\
& \leq 2^{2k} \sum_{\alpha\in \mathcal{A}_k} \frac{1}{|\Omega_m|} \sum_{a\in \Omega_m}\left|\sum_{y\le |h|<k^2} \frac{e\left(\alpha h\right)- 1}{h} \ta(h)\right|^{2k} + \left(\frac{c_1}{k^2}\right)^{2k}\\
&\ll k^4 2^{2k}\left(\left(\frac{64 N^2 k}{y}\right)^{k}+\frac{(8C_1\log m)^{2k}}{m^{\eta}}\right)+\left(\frac{c_1}{k^2}\right)^{2k}\ll e^{-4k},
\end{aligned}
\end{equation}
which completes the proof.
\end{proof}
\begin{proof}[Proof of Proposition \ref{MomentsMaxTail2}]
Since the inner sum over $|h|$ is long in this case, we shall split it into dyadic intervals. Let $J_1= \lfloor \log(k^2)/\log 2\rfloor$ and $J_2= \lfloor \log(m/2)/\log 2\rfloor$. We define $z_{J_1}:= k^2$, $z_{J_2+1}:=m/2$, and $z_j:=2^{j}$ for $J_1+1\leq j\leq J_2$. Then, using H\"older's inequality we obtain
\begin{equation}\label{DyadicSums}
\begin{aligned}
& \left|\sum_{k^2\le |h|<m/2} \frac{e\left(\alpha h\right)- 1}{h} \ta(h)\right|^{2k}= \left|\sum_{J_1\leq j\leq J_2}\frac{1}{j^2} \cdot \left(j^2\sum_{z_j\le |h|<z_{j+1}} \frac{e\left(\alpha h\right)- 1}{h} \ta(h)\right)\right|^{2k}\\
& \ \ \ \ \ \ \ \leq \left(\sum_{J_1\leq j\leq J_2} \frac{1}{j^{4k/(2k-1)}}\right)^{2k-1} \left(\sum_{J_1\leq j\leq J_2} j^{4k} \left|\sum_{z_j\le |h|<z_{j+1}} \frac{e\left(\alpha h\right)- 1}{h} \ta(h)\right|^{2k}\right)\\
&  \ \ \ \ \ \ \ \leq \left(\frac{c_2}{\log k}\right)^{2k+1} \sum_{J_1\leq j\leq J_2} j^{4k} \left|\sum_{z_j\le |h|<z_{j+1}} \frac{e\left(\alpha h\right)- 1}{h} \ta(h)\right|^{2k},
\end{aligned}
\end{equation}
for some constant $c_2>0$. Therefore, this reduces the problem to bounding the corresponding moments over each dyadic interval $[z_j, z_{j+1}]$, namely
$$\frac{1}{|\Omega_m|}\sum_{a\in \Omega_m} \max_{\alpha\in S}\left|\sum_{z_j\le |h|<z_{j+1}} \frac{e\left(\alpha h\right)- 1}{h} \ta(h)\right|^{2k}.
$$
We shall consider two cases, depending on whether $j$ is large in terms of $|S|$. First, if $4^j\geq  |S|$ then by Lemma \ref{BoundMomentsyz} we have
\begin{equation}\label{Largej}
\begin{aligned}
&\frac{1}{|\Omega_m|}\sum_{a\in \Omega_m} \max_{\alpha\in S}\left|\sum_{z_j\le |h|<z_{j+1}} \frac{e\left(\alpha h\right)- 1}{h} \ta(h)\right|^{2k}\\
& \leq  \sum_{\alpha\in S}\frac{1}{|\Omega_m|}\sum_{a\in \Omega_m} \left|\sum_{z_j\le |h|<z_{j+1}} \frac{e\left(\alpha h\right)- 1}{h} \ta(h)\right|^{2k} \ll 4^j\left(\frac{64 N^2 k}{2^j}\right)^{k}+\frac{|S|(8C_1\log m)^{2k}}{m^{\eta}}.
\end{aligned}
\end{equation}
since $z_j \geq 2^j$ for $J_1\leq j\leq J_2$.  We now suppose that $4^j<|S|$, and let $\mathcal{B}_j= \{b/4^j : 1\leq b\leq 4^j\}$. Then for all $\alpha \in S$ there exists $\beta_{\alpha}\in \mathcal{B}_j$ such that 
$|\alpha-\beta_{\alpha}|\leq 1/4^j$. In this case we have $e(\alpha h)= e(\beta_{\alpha} h)+ O(h/4^j),$
 and hence we obtain
 $$ \sum_{z_j\le |h|<z_{j+1}} \frac{e\left(\alpha h\right)- 1}{h} \ta(h)= \sum_{z_j\le |h|<z_{j+1}} \frac{e(\beta_{\alpha} h)- 1}{h} \ta(h) +O\left(\frac{1}{2^j}\right),
 $$
 since $z_{j+1}\asymp z_j\asymp 2^j$.  Therefore, similarly to \eqref{Transition} we derive
\begin{align*}
&\max_{\alpha\in S}\left|\sum_{z_j\le |h|<z_{j+1}} \frac{e\left(\alpha h\right)- 1}{h} \ta(h)\right|^{2k}\\
& \leq 2^{2k} \max_{\alpha\in \mathcal{B}_j}\left|\sum_{z_j\le |h|<z_{j+1}} \frac{e\left(\alpha h\right)- 1}{h} \ta(h)\right|^{2k}+ \left(\frac{c_3}{2^j}\right)^{2k},
 \end{align*}
 for some positive constant $c_3$. Thus, appealing to Lemma \ref{BoundMomentsyz} we get
 \begin{equation}\label{Smallj}
 \begin{aligned}
&\frac{1}{|\Omega_m|}\sum_{a\in \Omega_m} \max_{\alpha\in S}\left|\sum_{z_j\le |h|<z_{j+1}} \frac{e\left(\alpha h\right)- 1}{h} \ta(h)\right|^{2k} \\
&\leq 2^{2k}\sum_{\alpha\in \mathcal{B}_j} \frac{1}{|\Omega_m|}\sum_{a\in \Omega_m}\left|\sum_{z_j\le |h|<z_{j+1}} \frac{e\left(\alpha h\right)- 1}{h} \ta(h)\right|^{2k}+ \left(\frac{c_3}{2^j}\right)^{2k}\\
& \ll 4^j\left(\frac{2^8 N^2 k}{2^j}\right)^{k}+\frac{|S|(16C_1\log m)^{2k}}{m^{\eta}},
 \end{aligned}
 \end{equation}
 since $|\mathcal{B}_j|=4^j <|S|.$ Combining \eqref{Largej} and \eqref{Smallj} we deduce that in all cases we have
 $$
 \frac{1}{|\Omega_m|}\sum_{a\in \Omega_m} \max_{\alpha\in S}\left|\sum_{z_j\le |h|<z_{j+1}} \frac{e\left(\alpha h\right)- 1}{h} \ta(h)\right|^{2k}  \ll 4^j\left(\frac{2^8N^2k}{2^j}\right)^{k}+\frac{|S|(16C_1\log m)^{2k}}{m^{\eta}}.
 $$
Inserting this bound in \eqref{DyadicSums} gives
\begin{equation}\label{ThirdCaseBound}
\begin{aligned}
&\frac{1}{|\Omega_m|}\sum_{a\in \Omega_m} \max_{\alpha\in S}\left|\sum_{k^2\le |h|<m/2} \frac{e\left(\alpha h\right)- 1}{h} \ta(h)\right|^{2k}\\
 &\ll \left(\frac{c_4}{\log k}\right)^{2k+1} k^k \sum_{J_1\leq j\leq J_2} 4^j\left(\frac{j^4}{2^{j}}\right)^{k}+ \frac{|S|(2C_1\log m)^{8k}}{m^{\eta}}\\
 &\ll e^{-4k} + \frac{|S|(2C_1\log m)^{8k}}{m^{\eta}},
\end{aligned}
\end{equation}
for some positive constant $c_4$, where the last estimate follows since $j^4\leq 2^{j/4}$ for $j$ large enough, and $2^{J_1}\asymp k^2$. This completes the proof.
\end{proof}
Finally, we deduce Theorem \ref{MomentsMaxTail}.

\begin{proof}[Proof of Theorem \ref{MomentsMaxTail}]  By
Minkowski's inequality we have
\begin{align*}
&\left(\sum_{a\in \Omega_m} \max_{\alpha\in S}\left|\sum_{y\le |h| <m/2} \frac{e\left(\alpha h\right)- 1}{h} \ta(h)\right|^{2k}\right)^{1/2k}\\
 \leq 
 &  \ \left(\sum_{a\in \Omega_m} \max_{\alpha\in S}\left|\sum_{y\le |h|<k^2} \frac{e\left(\alpha h\right)- 1}{h} \ta(h)\right|^{2k}\right)^{1/2k}\\ 
& \ \ \ \ \ \ \ \ \ \ \ \ \ \ \ \ \ \ \ \ \ \ \ \ \ \ \ \ \ \ +\left(\sum_{a\in \Omega_m} \max_{\alpha\in S}\left|\sum_{k^2\le |h|< m/2} \frac{e\left(\alpha h\right)- 1}{h} \ta(h)\right|^{2k}\right)^{1/2k}.
\end{align*}
The result follows upon using Propositions \ref{MomentsMaxTail1} and \ref{MomentsMaxTail2}.
\end{proof}


\section{Proof  of Theorem \ref{Lower}\label{LowerSec}}
In this section we shall investigate the distribution of the partial sums $\sum_{0\leq n\leq x} \ph_a(n)$ in the special case $x=m/2$, where $\mathcal{F}=\{\ph_a\}_{a\in \Omega_m}$ is a family of $m$-periodic complex valued functions satisfying Assumptions 2 and 3. For a real number $t$, we define
\begin{equation}\label{LargeSumt}
\Psi_{\mathcal{F}}(t):=\frac{1}{|\Omega_m|} \Bigg| \Bigg \{a\in \Omega_m: \frac{1}{\sqrt{m}}  \im \sum_{0\leq n\leq m/2} \ph_a(n)> t\Bigg\} \Bigg|,
\end{equation}
and similarly we write $\Psi_{\mathcal{F}}^-(t)$ for the proportion
of $a\in\Omega_m$ such that $\frac{1}{\sqrt{m}}\im\sum_{0\leq n\leq m/2}\ph_a(n)<-V$.
We will prove the following result from which Theorem \ref{Lower} follows.
\begin{thm}\label{LowerB}
Let $m$ be large and $\mathcal{F}=\{\ph_a\}_{a\in \Omega_m}$ be a family of $m$-periodic complex valued functions satisfying Assumptions 2 and 3. Uniformly for $V$ in the range $1\leq V\leq \frac{N}{\pi} (\log\log m- 2\log\log\log m-B)$ we have
$$
\Psi_{\mathcal{F}}(V)= \exp\left(-A_0 \exp\left(\frac{\pi}{N} V\right) \left(1+O\left(V e^{-\pi V/(2N)}\right)\right)\right).
$$
Furthermore, in the same range of $V$ we also have 
$$\Psi_{\mathcal{F}}^-(V)= \exp\left(-A_0 \exp\left(\frac{\pi}{N} V\right) \left(1+O\left(V e^{-\pi V/(2N)}\right)\right)\right).$$
\end{thm}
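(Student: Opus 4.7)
The strategy is to estimate the Laplace transform
$L_a(s) := \frac{1}{|\Omega_m|}\sum_{a\in\Omega_m} e^{sZ_a}$
of $Z_a := \frac{1}{\sqrt{m}}\im\sum_{0\le n\le m/2}\ph_a(n)$, show that it matches the random-model Laplace transform $\mathcal{L}_{\X}(s)$ of Proposition~\ref{AsympLaplace}, and then invert by a saddle point argument. By the Plancherel identity \eqref{EqualityB} we have $Z_a = \sum_{0<|h|\le m/2}\gamma_m(h)\ta(h)$ with $|\gamma_m(h)|\le 1/(2|h|)$, so the random object to compare against is precisely $Z_{\X} = \sum_{0<|h|\le m/2}\gamma_m(h)\X(h)$. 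Proposition~\ref{AsympLaplace} then supplies the explicit asymptotic $\log\mathcal{L}_{\X}(s) = \frac{N}{\pi}s\log s + B_0 s + O(\log^2 s)$, which is the input on which everything else rests.

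To produce the matching $L_a(s) = \mathcal{L}_{\X}(s)(1+o(1))$, split $Z_a = Y_a + E_a$, where $Y_a$ collects the terms with $|h|\le H$ for a slowly growing parameter $H$. Expand $e^{sY_a}$ as a truncated Taylor polynomial of degree $k\asymp \log m/\log\log m$ plus remainder; compute each moment $\ex_a(Y_a^j)$ via Assumption~3 (whose aggregate error $O(C_1^j H^j / m^{\eta})$ is negligible as long as $(C_1 H)^k\ll m^{\eta/2}$); and bound both the Taylor remainder and the contribution of $E_a$ by Lemma~\ref{BOUNDRAND} applied with $c(h)=\gamma_m(h)$, which is permissible since $|\gamma_m(h)|\le 1/(2|h|)$. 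Multiplying these local bounds together gives $L_a(s) = \mathcal{L}_{\X}(s)(1+o(1))$ uniformly for $s$ in the required range, and hence the same asymptotic formula as above for $\log L_a(s)$.

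The upper bound in Theorem~\ref{LowerB} now follows from Markov's inequality $\Psi_{\mathcal F}(V) \le e^{-sV}L_a(s)$ at the saddle point $s^*$ defined by $\frac{N}{\pi}(\log s^* + 1) + B_0 = V$, namely $s^* = \exp(\pi(V-B_0)/N - 1)$. A direct computation gives $\log L_a(s^*) - s^*V = -\frac{N}{\pi}s^* + O(\log^2 s^*)$, and since $\frac{N}{\pi}s^* = A_0 e^{\pi V/N}$ while $\log^2 s^* \asymp V^2$ is negligible compared to $A_0 V e^{\pi V/(2N)}$, this yields $\log\Psi_{\mathcal F}(V) \le -A_0 e^{\pi V/N}(1 + O(Ve^{-\pi V/(2N)}))$. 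For the matching lower bound, tilt the uniform measure on $\Omega_m$ by $e^{s^*Z_a}/L_a(s^*)$; a second application of the moment comparison shows that under this tilted measure $Z_a$ has mean $\Lambda'(s^*) = V$ and variance $\Lambda''(s^*) = \frac{N}{\pi s^*}(1+o(1))$, so Chebyshev localizes a positive fraction of the tilted mass in an interval $[V, V+\Delta]$ of length $\Delta\asymp Ve^{-\pi V/(2N)}$. Un-tilting, the factor $e^{-s^*\Delta} = \exp(-O(Ve^{\pi V/(2N)}))$ gets absorbed into the announced multiplicative error $1 + O(Ve^{-\pi V/(2N)})$. The statement for $\Psi_{\mathcal F}^-(V)$ is then immediate: Assumption~3b forces the joint distribution of $\{\X(h)\}$ to be invariant under the sign flip $\X(h)\mapsto -\X(h)$, so $-Z_a$ and $Z_a$ share the same Laplace transform asymptotic and the identical argument applies. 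The main obstacle is the comparison step of the second paragraph: calibrating $H$, $k$ and the allowed range of $s$ so that the Assumption~3 error stays dominated by $A_0 e^{\pi V/N}$ all the way up to $V = (N/\pi)(\log\log m - 2\log\log\log m - B)$, and it is precisely the mismatch between $(C_1 H)^k$ and $m^\eta$ that forces the $2\log\log\log m$ loss in the admissible range.
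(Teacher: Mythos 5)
Your overall architecture --- compare the empirical Laplace transform of $Z_a=\frac{1}{\sqrt m}\im\sum_{0\le n\le m/2}\ph_a(n)$ with $\mathcal{L}_{\X}(s)$ via a truncated Taylor expansion and Assumption 3, then invert at the saddle point $s^*=\exp(\pi (V-B_0)/N-1)$ --- is exactly the paper's (Propositions \ref{Laplace} and \ref{AsympLaplace} followed by the saddle-point analysis), and your bookkeeping of the error terms ($\frac{N}{\pi}s^*=A_0e^{\pi V/N}$, localization window $\Delta\asymp Ve^{-\pi V/(2N)}$) is correct. The inversion itself differs only cosmetically: you use Chernoff plus exponential tilting with Chebyshev, while the paper concentrates $\int e^{st}\widetilde{\Psi_{\mathcal{F}}}(t)\,dt$ around $t=V$ and uses monotonicity of $\widetilde{\Psi_{\mathcal{F}}}$; both routes need the same convexity device (comparing $\Lambda$ at $s^*$ and $s^*e^{\pm\varepsilon}$ with $\varepsilon\asymp\log s^*/\sqrt{s^*}$) to justify $\Lambda'(s^*)=V+O(\Delta)$, since the asymptotic $\Lambda(s)=\frac{N}{\pi}s\log s+B_0s+O(\log^2 s)$ cannot be differentiated term by term --- you assert the tilted mean and variance, but this is where that argument must go. Your reduction of $\Psi^-_{\mathcal{F}}$ to $\Psi_{\mathcal{F}}$ via Assumption 3b is correct.

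The one genuine gap is the claim $L_a(s)=\mathcal{L}_{\X}(s)(1+o(1))$ for the \emph{unrestricted} average over $\Omega_m$. Truncating the Taylor expansion of $e^{sZ_a}$ at $k\asymp\log m/\log\log m$ terms requires a pointwise bound $|Z_a|\ll\log\log m$; without it the remainder, of size $\frac{|sZ_a|^{k+1}}{(k+1)!}e^{|sZ_a|}$, is out of control: one only has $|Z_a|\le 3N\log m$ in general, and $e^{3N|s|\log m}$ with $N|s|\asymp\log m/(\log\log m)^2$ dwarfs $\mathcal{L}_{\X}(s)$ even after multiplying by a bad-set measure of $m^{-1/10}$. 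Moment bounds on $E_a$ (which come from Assumption 3 combined with Lemma \ref{BOUNDRAND}, i.e.\ Lemma \ref{BoundMomentsyz} --- note Lemma \ref{BOUNDRAND} alone concerns only the random model, not $\ta$) control the \emph{measure} of the set where $Z_a$ is large, but not the \emph{contribution} of that set to $\sum_a e^{sZ_a}$. The paper therefore proves the comparison only for the average over $\Omega_m\setminus\mathcal{E}_m$ with $\mathcal{E}_m=\{a:|Z_a|\ge 4N\log\log m\}$ of measure $\le m^{-1/10}$, runs the entire saddle-point argument for the restricted distribution function $\widetilde{\Psi_{\mathcal{F}}}$, and only at the end uses $\Psi_{\mathcal{F}}(V)=\widetilde{\Psi_{\mathcal{F}}}(V)+O(m^{-1/10})$, which is admissible because $\Psi_{\mathcal{F}}(V)\ge\exp(-O(\log m/(\log\log m)^2))\gg m^{-1/10}$ throughout the stated range of $V$. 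Your proposal needs this excision; as written, the comparison step would fail.
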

Recall from \eqref{EqualityB} that
$$
\frac{1}{\sqrt{m}}\im \sum_{0\leq n\leq m/2} \ph_a(n)=\sum_{\substack{-m/2<h\le m/2\\ h\neq 0}}\gamma_m(h) \ta(h).
$$
In order to prove Theorem \ref{LowerB}, we will show that the moment generating function of the sum $\sum_{\substack{-m/2<h\le m/2\\ h\neq 0}}\gamma_m(h) \ta(h)$ (after removing a ``small'' set of ``bad'' points $a$) is very close to the moment generating function of the probabilistic random model $\sum_{\substack{-m/2<h\le m/2\\ h\neq 0}}\gamma_m(h) \X(h)$, which we already estimated in Proposition \ref{AsympLaplace}.
\begin{pro}\label{Laplace}
Let $m$ be large. There exists a set $\mathcal{E}_m\subset \Omega_m$ with cardinality $|\mathcal{E}_m|\leq m^{-1/10} |\Omega_m|$ such that for all complex numbers $s$ with $N|s|\leq (\log m)/(50 \log \log m)^2$ we have
$$
 \frac{1}{|\Omega_m|} \sum_{a\in \Omega_m\setminus \mathcal{E}_m} \exp\left(s \cdot  \sum_{\substack{-m/2<h\le m/2\\ h\neq 0}}\gamma_m(h) \ta(h)\right)= \mathcal{L}_{\X}(s)+ O\left(\exp\left(-\frac{\log m}{20\log\log m}\right)\right).
$$
\end{pro}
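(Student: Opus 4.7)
The plan is to compare moment generating functions by truncating the frequency sum and applying Assumption 3 term-by-term in a truncated Taylor expansion. Choose $H:=(\log m)^{A}$ for a large absolute constant $A$ (say $A=24$) and $K:=\lfloor(\log m)/(50\log\log m)\rfloor$, and split $S_{a}=L_{a}+T_{a}$ where $L_{a}:=\sum_{0<|h|\le H}\gamma_m(h)\ta(h)$ is the low-frequency part. Define $L^{\X},T^{\X}$ analogously on the random side. By independence of the $\{\X(h)\}$ the factorization $\mc{L}_{\X}(s)=\ex[\exp(sL^{\X})]\cdot\ex[\exp(sT^{\X})]$ is exact, and since $\ex(\X)=0$ (Assumption 3b) and $\ex(T^{\X})^{2}=\sum_{|h|>H}\gamma_m(h)^{2}\ex(\X^{2})\ll 1/H$, a short Taylor computation gives $\ex[\exp(sT^{\X})]=1+O(|s|^{2}/H)$, a negligible factor.

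Apply Lemma \ref{BoundMomentsyz} to $c(h)=\gamma_m(h)$ (with $|c(h)|\le 1/(2|h|)$ by \eqref{BGM}), with $y=H$, $z=m/2$ and exponent $2K$:
\[
\frac{1}{|\Omega_m|}\sum_{a\in\Omega_m}|T_{a}|^{2K}\ll\Bigl(\frac{C N^{2}K}{H}\Bigr)^{K}+\frac{(4C_{1}\log m)^{2K}}{m^{\eta}},
\]
which is $\ll H^{-K/3}$ for the given parameters (using $\eta\ge 1/2$). A parallel application with $y=1$, $z=H+1$, combined with the small-$y$ bound \eqref{SmallY} of Lemma \ref{BOUNDRAND}, yields $(1/|\Omega_m|)\sum_{a}|L_{a}|^{2K}\ll(10N\log K)^{2K}$. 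Setting $\delta:=H^{-1/12}$ and $\Lambda:=20N\log K$, define
\[
\mc{E}_m:=\bigl\{a\in\Omega_m:|T_{a}|>\delta\bigr\}\cup\bigl\{a\in\Omega_m:|L_{a}|>\Lambda\bigr\};
\]
Markov's inequality bounds each piece by $\ll 2^{-2K}$, hence $|\mc{E}_m|/|\Omega_m|\ll m^{-1/10}$.

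For $a\notin\mc{E}_m$ write $\exp(sS_{a})=\exp(sL_{a})+\exp(sL_{a})(\exp(sT_{a})-1)$. Since $|sT_{a}|\le|s|\delta=o(1)$ and $|\exp(sL_{a})|\le e^{|s|\Lambda}=m^{O(1/\log\log m)}$ on the good set, the second summand contributes $O(|s|\delta\cdot m^{O(1/\log\log m)})\ll\exp(-\log m/(15\log\log m))$. For the first, Taylor-expand
\[
\exp(sL_{a})=\sum_{k=0}^{K-1}\frac{(sL_{a})^{k}}{k!}+R_{K}(a,s),\qquad|R_{K}(a,s)|\le\Bigl(\frac{|s|\Lambda e}{K}\Bigr)^{K}e^{|s|\Lambda}.
\]
Since $|s|\Lambda=O(\log m\log K/(\log\log m)^{2})$ and $K\asymp\log m/\log\log m$ gives $K/\log K\gg\log m/(\log\log m)^{2}$, we have $|s|\Lambda\le K/(2e)$, hence $|R_{K}|\ll e^{-K/2}\ll\exp(-\log m/(20\log\log m))$.

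It remains to average the Taylor polynomial over $\Omega_m\setminus\mc{E}_m$ and to compare with $\ex[\exp(sL^{\X})]$. Split the good-set average as $\Omega_m$-average minus $\mc{E}_m$-average; the latter is controlled by Cauchy--Schwarz against the $2K$-th moment of $L_{a}$, contributing at most $2^{-K}\cdot m^{O(1/\log\log m)}$ after summing over $k$. Expanding $L_{a}^{k}$ into a $k$-fold frequency sum and invoking Assumption 3 replaces $(1/|\Omega_m|)\sum_{a}\ta(h_{1})\cdots\ta(h_{k})$ by $\ex[\X(h_{1})\cdots\X(h_{k})]$ up to termwise error $C_{1}^{k}/m^{\eta}$; weighted by $|\gamma_m(h_{i})|$ this costs $\ll(C_{2}\log H)^{k}/m^{\eta}$ at order $k$. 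The cumulative Assumption 3 error is therefore at most $\exp(|s|C_{2}\log H)/m^{\eta}\ll m^{-\eta/2}$, since $|s|\log H=O(\log m/\log\log m)$. This reduces the main sum to $\sum_{k=0}^{K-1}(s^{k}/k!)\ex[(L^{\X})^{k}]$, which differs from $\ex[\exp(sL^{\X})]$ only by a random-side Taylor tail controlled identically using \eqref{SmallY} and the trivial bound $|L^{\X}|\le N\log H$. Combining with the first paragraph reproduces $\mc{L}_{\X}(s)$ within the claimed error. The main obstacle is the simultaneous balancing: Assumption 3 caps $K$ at $(\log m)/(\log\log m)$, the Taylor remainder demands $K\gg|s|\Lambda\asymp(\log m)/(\log\log m)^{2}$, and the cumulative Assumption 3 error constrains $H$ to be only polylogarithmic---the window is narrow but the choices above fit.
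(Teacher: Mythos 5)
Your overall strategy (truncate the frequency sum, Taylor--expand the exponential, apply Assumption 3 termwise, control the tail by high moments and Markov) is the same as the paper's, but two of your quantitative claims fail as stated, and the first one reflects a missed structural point.

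First, the exceptional set. You need $|\mathcal{E}_m|\le m^{-1/10}|\Omega_m|$, but Markov with $2K$-th moments at the threshold $\Lambda=20N\log K$ can only give $\pr(|L_a|>\Lambda)\ll\bigl(5N\log(2K)/\Lambda\bigr)^{2K}\asymp 4^{-2K}=\exp\bigl(-c\log m/\log\log m\bigr)=m^{-o(1)}$, which is \emph{not} $\ll m^{-1/10}$; your assertion ``$2^{-2K}$, hence $m^{-1/10}$'' is false. Moreover, within the constraints of Assumption 3 you cannot take moments of order much higher than $\log m/\log\log m$, so this route is stuck. The correct move — the one the paper makes — is to observe that no probabilistic exclusion is needed for the low frequencies at all: by Assumption 2 and \eqref{BGM}, $|L_a|\le N\sum_{0<|h|\le H}(2|h|)^{-1}\le N\log H+O(N)\ll N\log\log m$ \emph{deterministically}, so $\mathcal{E}_m$ should consist only of the tail event $\{|T_a|>\delta\}$, whose Markov bound (starting the tail at a polylogarithmic cutoff) does give a genuine negative power of $m$.

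Second, the Taylor remainder. With $N|s|\le(\log m)/(50\log\log m)^2$ one has $|s|\Lambda\le 20N|s|\log K\approx \tfrac{20}{2500}\cdot\tfrac{\log m}{\log\log m}\cdot\tfrac{\log K}{\log\log m}\approx 0.4\,K$ for your $K=\lfloor\log m/(50\log\log m)\rfloor$, so the inequality $|s|\Lambda\le K/(2e)$ you invoke is false, and $(|s|\Lambda e/K)^K e^{|s|\Lambda}\ge(0.4e\cdot e^{0.4})^K\to\infty$: your remainder bound diverges. This is a constants problem (the paper avoids it by truncating at $L=\lfloor\log m/(20\log\log m)\rfloor$ against the smaller sup $4N\log\log m$, giving ratio $\approx 0.03$), and it propagates: several of your intermediate errors come out as $\exp(-c\log m/\log\log m)$ with $c<1/20$, short of the stated bound. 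Both defects are repairable by dropping the $L_a$-condition from $\mathcal{E}_m$ in favour of the deterministic bound and re-tuning $K$ upward, but as written the argument does not prove the proposition.
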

\begin{proof}
Let $\mathcal{E}_m$ be the set of $a\in \Omega_m$ such that 
$$ \left|\sum_{\substack{-m/2<h\le m/2\\ h\neq 0}}\gamma_m(h) \ta(h)\right| \geq 4N \log\log m.$$
Using Assumption 2 together with the bound \eqref{BGM} we get 
$$
\left|\sum_{\substack{-m/2<h\le m/2\\ h\neq 0}}\gamma_m(h) \ta(h)\right|
 \leq 3N \log\log m + \left|\sum_{(\log m)^2<|h|<m/2}\gamma_m(h) \ta(h)\right|,
$$
if $m$ is sufficiently large. Therefore, it follows from Lemma \ref{BoundMomentsyz} that for $r=\lfloor \log m/(10\log\log m)\rfloor$ we have
\begin{equation}\label{BoundE}
\begin{aligned}
|\mathcal{E}_m| 
&\leq \Big|\Big\{a\in \Omega_m: \big|\sum_{(\log m)^2<|h|<m/2}\gamma_m(h) \ta(h)\big|\geq N \log\log m\Big\}\Big|\\
& \leq (N\log\log m)^{-2r} \sum_{a\in \Omega_m} \left|\sum_{(\log m)^2<|h|<m/2}\gamma_m(h) \ta(h)\right|^{2r} \\
& \ll m^{-1/10} |\Omega_m|.
\end{aligned}
\end{equation}

Let $L=\lfloor \log m/(20\log\log m)\rfloor$. Then we have
\begin{equation}\label{TaylorLaplace}
\begin{aligned}
&\frac{1}{|\Omega_m|}\sum_{a\in \Omega_m\setminus \mathcal{E}_m}\exp\left(s \cdot \sum_{\substack{-m/2<h\le m/2\\ h\neq 0}}\gamma_m(h) \ta(h)\right)\\
&= \sum_{k=0}^L \frac{s^k}{k!} \frac{1}{|\Omega_m|}\sum_{a\in \Omega_m\setminus \mathcal{E}_m} \left(\sum_{\substack{-m/2<h\le m/2\\ h\neq 0}}\gamma_m(h) \ta(h)\right)^k+E_1\\
\end{aligned}
\end{equation}
where
$$ E_1\ll \sum_{k>L} \frac{|s|^k}{k!} (4N\log\log m)^k\leq \sum_{k>L} \left(\frac{15 N |s|\log\log m}{L}\right)^k \ll e^{-L} $$
by Stirling's formula and our assumption on $s$. Furthermore, note that 
$$ \sum_{\substack{-m/2<h\le m/2\\ h\neq 0}}|\gamma_m(h) \ta(h)| \leq \frac{N}{2}\sum_{1\leq |h|\leq m/2} \frac{1}{|h|}\leq 3N \log m,$$ 
if $m$ is sufficiently large. 
Therefore, it follows from equation \eqref{BoundE}  that for all integers $0\leq k\leq L$ we have 
\begin{equation}\label{MoAsymp}
\begin{aligned}
&\frac{1}{|\Omega_m|}\sum_{a\in \Omega_m\setminus \mathcal{E}_m} \left(\sum_{\substack{-m/2<h\le m/2\\ h\neq 0}}\gamma_m(h) \ta(h)\right)^k\\
&=\frac{1}{|\Omega_m|}\sum_{a\in \Omega_m} \left( \sum_{\substack{-m/2<h\le m/2\\ h\neq 0}}\gamma_m(h) \ta(h)\right)^k +O\left(m^{-1/10}(3N\log m)^{k}\right)\\
&=\ex\left(\Big( \sum_{\substack{-m/2<h\le m/2\\ h\neq 0}}\gamma_m(h) \X(h)\Big)^k\right) +O\left(m^{-1/25}\right),
\end{aligned}
\end{equation}
where the last equality follows from expanding the moments and using Assumption 3 as in the proof of Lemma \ref{BoundMomentsyz}. 

\noindent Furthermore, it follows from equation \eqref{BGM}, Lemma \ref{BOUNDRAND} and Stirling's formula that  
$$ \sum_{k>L} \frac{|s|^k}{k!} \ex\left(\Big|\sum_{\substack{-m/2<h\le m/2\\ h\neq 0}}\gamma_m(h) \X(h)\Big|^k\right) \ll \sum_{k>L} \left(\frac{15N|s|\log k}{k}\right)^k\ll \sum_{k>L} \left(\frac{15N|s|\log L}{L}\right)^k\ll e^{-L}.$$
Finally, combining this bound with \eqref{TaylorLaplace} and \eqref{MoAsymp},  we derive
\begin{align*}
&\frac{1}{|\Omega_m|}\sum_{a\in \Omega_m\setminus \mathcal{E}_m}\exp\left(s \cdot \sum_{\substack{-m/2<h\le m/2\\ h\neq 0}}\gamma_m(h) \ta(h)\right)\\
&=  \sum_{k=0}^L \frac{s^k}{k!} \ex\left(\Big(\sum_{\substack{-m/2<h\le m/2\\ h\neq 0}}\gamma_m(h) \X(h)\Big)^k\right) +O\left(e^{-L}+ m^{-1/25}e^{|s|}\right)\\
&= \mathcal{L}_{\X}(s)+  O\left(e^{-L}\right),
\end{align*}
as desired.
\end{proof}
Using the saddle-point method and Propositions \ref{AsympLaplace} and \ref{Laplace}, we prove Theorem \ref{LowerB}.
\begin{proof}[Proof Theorem \ref{LowerB}]

 Let $ \mathcal{E}_m$ be the set in the statement of Proposition \ref{Laplace}, and $\widetilde{\Psi_{\mathcal{F}}}(t)$ be the proportion of $a\in \Omega_m \setminus \mathcal{E}_m$ such that $\sum_{\substack{-m/2<h\le m/2\\ h\neq 0}} \gamma_m(h)\ta(h)>t$. Then, it follows from equation \eqref{EqualityB} and Proposition \ref{Laplace} that
$$ \Psi_{\mathcal{F}}(t)= \widetilde{\Psi_{\mathcal{F}}}(t) +O\left(m^{-1/10}\right).$$
Furthermore, it follows from Propositions \ref{Laplace} and \ref{AsympLaplace} that for all positive real numbers $s$ such that $2N\leq Ns\leq (\log m)/(50\log \log m)^2$ we have 
\begin{equation}\label{EstDistri}
\begin{aligned}
\int_{-\infty}^{\infty} e^{st } \widetilde{\Psi_{\mathcal{F}}}(t) dt & 
= \frac{1}{|\Omega_m|}\sum_{a\in \Omega_m\setminus \mathcal{E}_m}\int_{-\infty}^{\sum_{\substack{-m/2<h\le m/2\\ h\neq 0}} \gamma_m(h) \ta(h)} e^{st }  dt\\
& = \frac{1}{s|\Omega_m|} \sum_{a\in \Omega_m\setminus \mathcal{E}_m} \exp\left(s \cdot \sum_{\substack{-m/2<h\le m/2\\ h\neq 0}}\gamma_m(h) \ta(h)\right)\\
&=\exp\left(\frac{N}{\pi}s\log s+ B_0 s +O(\log^2 s)\right).
\end{aligned} 
\end{equation}
The result trivially holds if $V$ is small, so we might assume that $V$ is a sufficiently large real number such that $V\leq (N/\pi)(\log\log m-2\log\log\log m-B)$, where $B=\log(N/\pi)+8 -B_0\pi/N$. We shall choose $s$ (the saddle point) such that 
\begin{equation}\label{Saddle}
\left(\frac{N}{\pi} s\log s+B_0 s-sV\right)'=0 \Longleftrightarrow s= \exp\left(\frac{\pi}{N}V-\frac{\pi}{N}B_0-1\right).
\end{equation}
Let $0<\varepsilon<1$ be a small parameter to be chosen, and put $S=s e^{\varepsilon}$. Then, it follows from \eqref{EstDistri} that 
\begin{align*}
\int_{V+N\varepsilon/\pi}^{\infty} e^{st } \widetilde{\Psi_{\mathcal{F}}}(t)dt 
&\leq \exp\left(s(1-e^{\varepsilon})(V+ N\varepsilon/\pi)\right)\int_{V+N\varepsilon/\pi}^{\infty} e^{St }\widetilde{\Psi_{\mathcal{F}}}(t)dt\\
&\leq  \exp\left(s(1-e^{\varepsilon})(V+N\varepsilon/\pi)+ \frac{N}{\pi}s e^{\varepsilon} \log s+ \frac{N}{\pi} s e^{\varepsilon} \varepsilon +B_0 s e^{\varepsilon} +O(\log^2 s)\right) \\
&=  \exp\left(\frac{N}{\pi}s\log s+B_0 s+ \frac{N}{\pi} s(1+\varepsilon-e^{\varepsilon}) +O(\log^2 s)\right).
\end{align*}
Therefore, choosing $\varepsilon= C_0(\log s)/\sqrt{s}$ for a suitably large constant $C_0$ and using \eqref{EstDistri} we obtain
$$
\int_{V+N\varepsilon/\pi}^{\infty} e^{st } \widetilde{\Psi_{\mathcal{F}}}(t)dt \leq  e^{-V^2}\int_{-\infty}^{\infty} e^{st } \widetilde{\Psi_{\mathcal{F}}}(t)dt. 
$$
A similar argument shows that
$$
\int_{-\infty}^{V-N\varepsilon/\pi} e^{st} \widetilde{\Psi_{\mathcal{F}}}(t)dt \leq  e^{-V^2}\int_{-\infty}^{\infty} e^{st } \widetilde{\Psi_{\mathcal{F}}}(t)dt. 
$$
Combining these bounds with \eqref{EstDistri} gives
\begin{equation}\label{EstDistri2}
\int_{V-N\varepsilon/\pi}^{V+N\varepsilon/\pi} e^{st } \widetilde{\Psi_{\mathcal{F}}}(t)dt =  \exp\left(\frac{N}{\pi} s\log s+ B_0 s +O(\log^2 s)\right).
\end{equation}
Furthermore, since $\widetilde{\Psi_{\mathcal{F}}}(t)$ is non-increasing as a function of $t$ we can bound the above integral as follows
$$ 
e^{sV+O(s\varepsilon)}\widetilde{\Psi_{\mathcal{F}}}(V+N\varepsilon/\pi) \leq \int_{V-N\varepsilon/\pi}^{V+N\varepsilon/\pi} e^{st } \widetilde{\Psi_{\mathcal{F}}}(t)dt\leq  e^{sV+O(s\varepsilon)} \widetilde{\Psi_{\mathcal{F}}}(V-N\varepsilon/\pi).
$$ 
Inserting these bounds in \eqref{EstDistri2} and using the definition of $s$ in terms of $V$, we obtain
$$\widetilde{\Psi_{\mathcal{F}}}(V+N\varepsilon/\pi) \leq \exp\left(- \frac{N}{\pi} \exp\left(\frac{\pi}{N}V-\frac{\pi}{N}B_0-1\right)\big(1+O( \varepsilon)\big)\right) \leq \widetilde{\Psi_{\mathcal{F}}}(V-N\varepsilon/\pi),
$$
and thus 
$$ 
\widetilde{\Psi_{\mathcal{F}}}(V)= \exp\left(- \frac{N}{\pi} \exp\left(\frac{\pi}{N}V-\frac{\pi}{N}B_0-1\right)\left(1 + O\left(Ve^{-\pi V/(2N)}\right)\right)\right),
$$
as desired.

\end{proof}
We end this section by proving Corollary \ref{Main3}. By Theorem \ref{LowerB}, it follows that there are  $\gg |\Omega_m| m^{-1/\log\log m}$ elements $a\in \Omega_m$ such that
$$ \left|\sum_{0\leq n\leq m/2} \ph_a(n)\right|\ge \left(\frac{N}{\pi}+o(1)\right) \sqrt{m}\log\log m.$$
 Hence, in order to deduce Corollary \ref{Main3} it suffices to show that $|\Omega_m|$ is larger than a multiple of $m$. 

\begin{lem}\label{SizeOmegam} Let $\mathcal{F}=\{\ph_a\}_{a\in \Omega_m}$ be a family of $m$-periodic complex valued functions satisfying Assumptions 2 and 3. Then, we must have $|\Omega_m|\gg m$, where the implied constant depends only on the involved constants in the Assumptions.
\end{lem}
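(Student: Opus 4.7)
My plan is a rank argument on the ``moment matrix'' of the Fourier coefficients. I will form the matrix $B = (\ta(h))_{a, h}$ of size $|\Omega_m| \times (m-1)$, with rows indexed by $a \in \Omega_m$ and columns by $h \in (-m/2, m/2]\setminus\{0\}$, and set $M = B^T B$. Then $M$ is a positive semi-definite $(m-1)\times(m-1)$ matrix with $\rank(M) = \rank(B) \leq |\Omega_m|$. The idea is that the empirical ``covariance'' matrix $M$ is close enough to a positive multiple of the identity that its rank must be $\gg m$, which then forces $|\Omega_m| \gg m$.

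The precise tool is the Cauchy--Schwarz inequality applied to the eigenvalues: if $r = \rank(M)$ and $\lambda_1, \ldots, \lambda_r \geq 0$ are the nonzero eigenvalues of $M$, then
\[
(\Tr M)^2 = \Bigl(\sum_i \lambda_i\Bigr)^2 \leq r \sum_i \lambda_i^2 = r \|M\|_F^2,
\]
so $r \geq (\Tr M)^2 / \|M\|_F^2$, and I will show this ratio is $\gg m$.

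To compute the trace, set $\sigma^2 := \ex(\X^2)$. Assumption 3b with $\ell = 0$ gives $\ex(\X) = 0$, and Assumption 3a (with $\varepsilon = 1$) yields $\pr(|\X| > N - 1) \gg 1$, so $\sigma^2 \gg 1$ with an implicit constant depending only on $N$ and the constants in Assumption 3a. For each $h \ne 0$, Assumption 3 with $k=2$ and $h_1 = h_2 = h$ gives $M_{h,h} = |\Omega_m|(\sigma^2 + O(C_1^2 m^{-\eta}))$, so summing over the $m-1$ nonzero frequencies (and using $\eta \ge 1/2$) yields $\Tr(M) = (m-1)|\Omega_m|\sigma^2(1+o(1))$ for large $m$. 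For the off-diagonal, when $h_1 \neq h_2$ are both nonzero, the independence of $\X(h_1)$ and $\X(h_2)$ together with $\ex(\X) = 0$ gives $\ex(\X(h_1)\X(h_2)) = 0$; hence by Assumption 3, $M_{h_1, h_2} = |\Omega_m| \cdot O(C_1^2 m^{-\eta})$.

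Putting these estimates together,
\[
\|M\|_F^2 = \sum_{h \ne 0} M_{h,h}^2 + \sum_{h_1 \ne h_2} M_{h_1, h_2}^2 \ll m|\Omega_m|^2 \sigma^4 + m^{2-2\eta}|\Omega_m|^2 \ll m|\Omega_m|^2,
\]
where the last step uses $\eta \geq 1/2$. Combining with the trace estimate,
\[
\rank(M) \geq \frac{(\Tr M)^2}{\|M\|_F^2} \gg \frac{m^2 |\Omega_m|^2}{m |\Omega_m|^2} = m,
\]
and since $\rank(M) \leq |\Omega_m|$, we conclude $|\Omega_m| \gg m$. The main (mild) subtlety is the borderline case $\eta = 1/2$, where the off-diagonal contribution to $\|M\|_F^2$ is of the same order as the diagonal contribution; nevertheless, this only affects the implicit constant in the final rank bound. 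Remarkably, only the $k=2$ case of Assumption 3 is needed, and Assumption 2 enters only through the fact that the relevant moments exist and are finite.
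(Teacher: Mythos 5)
Your argument is essentially identical to the paper's proof: both form the Gram matrix $M={}^{\mathrm{t}}LL$ of the Fourier coefficients, apply $(\Tr M)^2\le \rank(M)\,\Tr(M^2)$ via Cauchy--Schwarz on the eigenvalues, and estimate the diagonal and off-diagonal entries using only the $k=2$ case of Assumption 3. Your additional observation that $\ex(\X^2)\gg 1$ follows from Assumption 3a (a point the paper leaves implicit) is correct in spirit, though to cover the case $N\le 1$ you should take $\varepsilon=\min(1,N/2)$ rather than $\varepsilon=1$.
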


\begin{proof}
Set $J=(-m/2,m/2]\cap\mathbb{Z}\smallsetminus\{0\}$. Let us recall the
following elementary result: if $M$ is a symmetric real matrix, then
$(\Tr M)^2\le(\mathrm{rk}M)\Tr(M^2)$; one sees this by applying the
Cauchy-Schwarz inequality to the non-zero eigenvalues of $M$. We use it with
$M=[b_{hj}]_{h,j}=\ ^\mathrm{t}\!LL$ where $L=[\ta(h)]_{a,h}$ (here the index $a$
is in $\Omega_m$ and $h,j$ are in $J$). Putting $\beta=\ex(\X^2)$, Assumption 3
with $k=2$ gives
\[
b_{hh}=\sum_{a\in\Omega_m}\ta(h)^2=|\Omega_m|\beta+O\Bigl(\frac{|\Omega_m|}{\sqrt{m}}\Bigr)
\]
for every $h\in J$ and
\[
b_{hj}=\sum_{a\in\Omega_m}\ta(h)\ta(j)=O\Bigl(\frac{|\Omega_m|}{\sqrt{m}}\Bigr)
\]
for every distinct $h,j$. We deduce the estimates
\[
\Tr M=\sum_{h\in J}b_{hh}=|\Omega_m|(\beta m+O(\sqrt{m}))\ ,\quad\Tr(M^2)=\sum_{h,j}b_{hj}^2=O(m|\Omega_m|^2)\ .
\]
One gets
\[|\Omega_m|^2(\beta^2m^2+O(m^{3/2}))=(\Tr M)^2\le(\mathrm{rk}M)\Tr(M^2)\ll m|\Omega_m|^2\mathrm{rk}M.\]
Whence $m\ll \mathrm{rk}M\le\mathrm{rk}L\le|\Omega_m|$.
\end{proof}


\section{An example with very large partial sums \label{HugeSec}}

In this section we shall prove Proposition \ref{HugeFamily}. Let $m\ge7$ be an
integer. Put $r=\lfloor3\log m/\log2\rfloor$ and
$\displaystyle P=\sum_{k=1}^rX^{2k-1}$. Take a finite field $\Omega_m$ with $2^r$
elements and $\psi:\Omega_m\rightarrow\{-1,1\}$ a non-trivial additive
character. By Weil's theorem \cite{Weil} one has
\[\Bigl|\sum_{a\in\Omega_m}\psi(P(a))\Bigr|\le(2r-2)2^{r/2}\textrm{, so }|\{a\in\Omega_m\ |\ \psi(P(a))=1\}|\ge\frac{2^r}{2}-(r-1)2^{r/2}>\frac{m}{2}\]
and the same is true for $|\{a\in\Omega_m\ |\ \psi(P(a))=-1\}|$. Putting
$J=(-m/2,m/2]\cap\mathbb{Z}$, we can therefore choose distinct elements
$(\alpha_h)_{h\in J}$ of $\Omega_m^\times$ such that $\psi(P(\alpha_h))=1$ if
$h\ge1$ and $\psi(P(\alpha_h))=-1$ if $h\le0$. For every $a\in\Omega_m$, we
define $\ph_a$ in such a way that $\forall h\in J\ \ta(h)=\psi(P(\alpha_ha))$,
that is, we put
\[\forall n\in\mathbb{Z}\quad\ph_a(n)=\frac{1}{\sqrt{m}}\sum_{h\in J}\psi(P(\alpha_ha))e_m(-nh).\]
Let $\{h_1,\cdots,h_k\}$ be a non-empty subset of $J$ with at most $r$ elements.
By Vandermonde's formula, the polynomial
$P(\alpha_{h_1}X)+\cdots+P(\alpha_{h_k}X)$ has at least one non-zero coefficient
and has odd degree. Applying Weil's theorem, we obtain
\[\Bigl|\sum_{a\in\Omega_m}\ta(h_1)\cdots\ta(h_k)\Bigr|\le(2r-2)2^{r/2}\ll m^{3/2}\log m. \]
This implies Assumption 3 with any  $1<\eta<3/2$. Indeed, take a sequence
$(\X(h))_{h\in\mathbb{Z}}$ of I.I.D. random variables such that
$\pr(\X(h)=1)=\pr(\X(h)=-1)=1/2$. For every positive integer $k\le 3\log m/\log 2$ and
every $(h_1,\dots,h_k)\in J^k$, one has
\[\frac{1}{|\Omega_m|}\sum_{a\in\Omega_m}\ta(h_1)\cdots\ta(h_k)=\ex(\X(h_1)\cdots\X(h_k))+O\Bigl(\frac{\log m}{m^{3/2}}\Bigr)\]
(one can in fact prove this estimate for all positive $k\le2r$). Thus, we deduce that our family satisfies Assumption 2 with $N=1$, and Assumption 3 with $\eta=4/3$.


To conclude the proof of Proposition \ref{HugeFamily}, let us look at $a=1$: we
have $\widehat{\ph_1}(h)=1$ if $h\ge1$ and $\widehat{\ph_1}(h)=-1$ if $h\le0$.
Using the estimate \eqref{Planche2} we deduce
\[\M(\ph_1)\ge\Bigl|\sum_{0\le n\le m/2}\ph_1(n)\Bigr|=\Bigl|\sqrt{m}\sum_{1\le n\le m/2}\frac{1-(-1)^n}{i\pi n}+O(\sqrt{m})\Bigr|=\frac{\sqrt{m}}{\pi}\log m+O(\sqrt{m}).\]


\section{Applications to families of $\ell$-adic trace functions: Proof of Corollaries \ref{GBirch}, \ref{GKloo} and \ref{HypKloo} }\label{TraceSection}
In this section we recall some notions of the formalism of $\ell$-adic trace functions and list some examples of families of functions for which we can apply Theorem $\ref{Main}$ and Theorem $\ref{Lower}$. We are assuming the definition of a constructible $\ell$-adic sheaf, for which, as well as for a general introduction on the subject, we refer the reader to \cite[Section $1.1$]{Del80} and \cite[Section $3.4.2$]{Kat80}. We start by introducing the notion of a trace function attached to a constructible $\ell$-adic sheaf as in \cite[$7.3.7$]{Kat90}. \newline In the following $p,\ell >2$ are distinct prime numbers and $\iota:\overline{\mathbb{Q}}_{\ell}\hookrightarrow \mathbb{C}$ is a fixed embedding. Let $\mathcal{F}$ be a constructible $\ell$-adic sheaf on $\overline{\mathbb{A}}_{\F}^{1}$. For any $x\in\overline{\mathbb{A}}_{\F}^{1}(\Fn)$ one defines
\[
t_{\mathcal{F},n}(x):=\iota(\Tr(\Fr_{p^{n}}|\mathcal{F}_{\overline{x}})),
\]
where $\Fr_{p^{n}}$ is the geometric Frobenius automorphism of $\Fn$ and $\mathcal{F}_{\overline{x}}$ is the stalk of $\mathcal{F}$ at a geometric point $\overline{x}$ over $x$. The function $t_{\mathcal{F},n}$ is called \textit{the trace function attached to $\mathcal{F}$ over $\Fn$}. If there is not ambiguity, we denote by $t_{\mathcal{F}}$ the trace function $t_{\mathcal{F},1}$. 
\begin{defin}
Let $\mathcal{F}$ be a constructible $\ell$-adic sheaf on $\overline{\mathbb{A}}_{\F}^{1}$ and $j:U\hookrightarrow\overline{\mathbb{A}}_{\F}^{1}$ the largest dense open subset of $\overline{\mathbb{A}}_{\F}^{1}$ where $\mathcal{F}$ is lisse.
\begin{itemize}
\item[$i)$] The sheaf $\mathcal{F}$ is said to be a \textit{middle-extension} $\ell$-adic sheaf if $\mathcal{F}=j_{*}j^{*}\mathcal{F}$ (see \cite[$4.4$, $4.5$]{Kat80} for the definition of $j_{*}j^{*}\mathcal{F}$ and its basic properties).
\item[$ii)$] The sheaf $\mathcal{F}$ is said to be a \textit{middle-extension $\ell$-adic sheaf, punctually pure of weight $0$} if it is a middle-extension sheaf and if for every $n\geq 1$ and every $x\in U(\Fn)$, the images of the eigenvalues of $(\Fr_{p^{n}}|\mathcal{F}_{\overline{x}})$ via the fixed embedding $\iota:\overline{\mathbb{Q}}_{\ell}\hookrightarrow \mathbb{C}$, are complex numbers of modulus $1$.
\end{itemize}
\end{defin}

\begin{rem} 
 In accordance with \cite[Definition $1.12$]{FKM15}, we will refer to trace functions attached to punctually pure of weight $0$ middle-extension $\ell$-adic sheaves on $\overline{\mathbb{A}}_{\F}^{1}$ as \textit{trace functions}.
\end{rem}

\subsection{Conductor and Fourier transform}
\begin{defin}[\cite{FKM19}, pp. $4-6$] Let $\mathcal{F}$ be a constructible $\ell$-adic sheaf on $\overline{\mathbb{A}}_{\F}^{1}$ and $j:U\hookrightarrow\overline{\mathbb{A}}_{\F}^{1}$ the largest dense open subset of $\overline{\mathbb{A}}_{\F}^{1}$ where $\mathcal{F}$ is lisse. The \textit{conductor of $\mathcal{F}$} is defined as
\[
c(\mathcal{F}):=\rank(\mathcal{F})+|\sing(\mathcal{F})|+\sum_{x\in \overline{{\mathbb{P}}}_{\F}^{1}(\overline{\mathbb{F}}_{p})}\text{Swan}_{x}(j_{*}j^{*}\mathcal{F})+\dim H_{c}^{0}(\overline{\mathbb{A}}_{\F}^1,\mathcal{F}),
\]
where
\begin{enumerate}
\item[$i)$] $\rank(\mathcal{F}):=\dim \mathcal{F}_{x}$, for any $x$ where $\mathcal{F}$ is lisse.
\item[$ii)$] $\sing(\mathcal{F}):=\{x\in\overline{\mathbb{P}}_{\F}^{1}(\overline{\mathbb{F}}_{p}):\mathcal{F} \text{ is not lisse at }x\}$.
\item[$iii)$] For any $x\in\overline{\mathbb{P}}_{\F}^{1}(\overline{\mathbb{F}}_{p})$, $\Swan_{x}(j_{*}j^{*}\mathcal{F})$ is the \textit{Swan conductor of $\mathcal{F}$ at $x$} (see \cite[Chapter $1$]{Kat88} for the definition of the Swan conductor).
\end{enumerate}
\label{defn : cond}
\end{defin}

\begin{rem}
Recall that if $\mathcal{F}$ is a middle-extension sheaf then
\[
c(\mathcal{F})=\rank(\mathcal{F})+|\sing(\mathcal{F})|+\sum_{x\in \overline{{\mathbb{P}}}_{\F}^{1}(\overline{\mathbb{F}}_{p})}\text{Swan}_{x}(\mathcal{F}),
\]
since in this case $\mathcal{F}\cong j_{*}j^{*}\mathcal{F}$ and $\dim H_{c}^{0}(\overline{\mathbb{A}}_{\F}^1,\mathcal{F})=0$.
\end{rem}

Let $\mathcal{F}$ be a middle-extension $\ell$-adic sheaf on $\overline{\mathbb{A}}_{\F}^{1}$ and let $t_{\mathcal{F},n}$ be the trace function attached to $\mathcal{F}$ over $\Fn$. We recall that the normalized Fourier transform of $t_{\mathcal{F},n}$ is given by
\[
\FT(t_{\mathcal{F},n})(x):=-\frac{1}{\sqrt{p^{n}}}\sum_{y\in\mathbb{F}_{p^{n}}}t_{\mathcal{F},n}(y)e_{p}(\Tr_{\mathbb{F}_{p^{n}}/\mathbb{F}_{p}}(xy)).
\]
In what follows, we recall some key results about the Fourier transform in the trace functions setting.
\begin{defin}[\cite{Kat88}, Definition $8.2.1.2$]
A middle-extension $\ell$-adic sheaf $\mathcal{F}$ over $\overline{\mathbb{A}}_{\mathbb{F}_{p}}^1$ is said to be a \textit{Fourier sheaf} if it does not contain any Artin-Schreier sheaf $\mathcal{L}_{e_{p}(aT)}$ in its geometric Jordan-H\"older decomposition.
\end{defin}

\begin{thm}
Let $\mathcal{F}$ be a middle-extension $\ell$-adic Fourier sheaf over $\overline{\mathbb{A}}_{\mathbb{F}_{p}}^1$. Then there exists a middle-extension $\ell$-adic sheaf $\FT(\mathcal{F})$ over $\overline{\mathbb{A}}_{\mathbb{F}_{p}}^1$ such that
\[
t_{\FT(\mathcal{F}),n}(x)=-\frac{1}{p^{n/2}}\sum_{y\in\mathbb{F}_{p^{n}}}t_{\mathcal{F},n}(y)e_{p}(\Tr_{\mathbb{F}_{p^{n}}/\mathbb{F}_{p}}(xy)),
\]
for any $n\geq 1$ and any $x\in\mathbb{F}_{p^{n}}$. Moreover one has that:
\begin{itemize}
\item[$i)$] If $\mathcal{F}$ is geometrically irreducible,  punctually pure of weight $0$ then the same holds for $\FT(\mathcal{F})$. Moreover $\FT(\mathcal{F})$ is a Fourier sheaf with the property
\[
\FT(\FT(\mathcal{F}))=[\times(-1)]^{*}\mathcal{F},
\]
where $[\times(-1)]^{*}\mathcal{F}$ denotes the inverse image sheaf $[x\mapsto -x]^{*}\mathcal{F}$. 
\item[$ii)$]$\rank(\FT(\mathcal{F}))\leq \sum_{x}\Swan_{x}(\mathcal{F})+|\sing(\mathcal{F})|\cdot\rank (\mathcal{F})$,
\item[$iii)$] $ |\sing(\FT(\mathcal{F}))|\leq 2+\rank (\mathcal{F})$,
\item[$iv)$]$\Swan_{x}(\FT(\mathcal{F}))\leq c(\mathcal{F})$.
\end{itemize}
In particular, $c(\FT(\mathcal{F}))\leq 10c(\mathcal{F})^{2}$.
\label{thm : Fouriersheaf}
\end{thm}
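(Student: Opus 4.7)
The plan is to invoke the Deligne--Laumon $\ell$-adic Fourier transform as the organizing construction, and then read off each numerical statement from Laumon's local theory. Explicitly, I would define
\[
\FT(\mathcal{F}) := R^{1}pr_{1,!}\bigl(pr_{2}^{*}\mathcal{F}\otimes\mathcal{L}_{\psi(xy)}\bigr)(1/2),
\]
on $\overline{\mathbb{A}}_{\mathbb{F}_p}^{1}$, where $pr_{1},pr_{2}$ are the projections from $\mathbb{A}^{1}\times\mathbb{A}^{1}$, $\mathcal{L}_{\psi(xy)}$ is the Artin--Schreier sheaf pulled back along the product $(x,y)\mapsto xy$, and the Tate twist absorbs the normalization factor $1/p^{n/2}$. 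Because $\mathcal{F}$ is assumed to be a Fourier sheaf (no Artin--Schreier constituent), the $R^{0}$ and $R^{2}$ contributions vanish, so this single degree $1$ object is the right construction.

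The trace function identity is then a direct application of the Grothendieck--Lefschetz trace formula: computing $\mathrm{Tr}(\Fr_{p^{n}}\mid \FT(\mathcal{F})_{\bar x})$ as an alternating sum of traces on compactly supported cohomology of the fiber over $x$ produces precisely $-p^{-n/2}\sum_{y\in\mathbb{F}_{p^{n}}}t_{\mathcal{F},n}(y)e_{p}(\Tr_{\mathbb{F}_{p^{n}}/\mathbb{F}_{p}}(xy))$, with the sign coming from the degree $1$ shift. For part $i)$, the preservation of the middle-extension property is Laumon's stationary phase decomposition, Fourier inversion $\FT\circ\FT=[\times(-1)]^{*}$ is the standard involutivity of the Deligne--Fourier transform, and purity of weight $0$ is Deligne's Weil II applied to the relative $R^{1}pr_{1,!}$ (combined with the half-integer Tate twist). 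Geometric irreducibility transfers because $\FT$ is an equivalence of categories on Fourier sheaves.

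For the numerical bounds $ii)$--$iv)$, I would use the Euler--Poincar\'e formula for the affine line. The rank of $\FT(\mathcal{F})$ at a generic point is $\dim H^{1}_{c}(\overline{\mathbb{A}}^{1}_{\overline{\mathbb{F}}_p},pr_{2}^{*}\mathcal{F}\otimes\mathcal{L}_{\psi(xy)})$, and the Grothendieck--Ogg--Shafarevich formula expresses this as a sum of a drop term over finite singularities (contributing at most $|\sing(\mathcal{F})|\cdot\rank(\mathcal{F})$) and of the Swan conductor at infinity (contributing at most $\sum_{x}\Swan_{x}(\mathcal{F})$, since tensoring by $\mathcal{L}_{\psi(xy)}$ introduces controlled wild ramification). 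For the singularity count, the only points where the rank of $\FT(\mathcal{F})$ can drop are where the local Fourier transform of some local constituent of $\mathcal{F}$ degenerates; Laumon's local Fourier transform formalism shows that this produces at most $\rank(\mathcal{F})$ such finite points, plus the two ``boundary'' points $0$ and $\infty$. The Swan conductor bound at any point of $\FT(\mathcal{F})$ follows similarly from Laumon's stationary phase: each local contribution to $\Swan_{x}(\FT(\mathcal{F}))$ is bounded in terms of ranks and Swan conductors of local constituents of $\mathcal{F}$, whose sum is at most $c(\mathcal{F})$.

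The final bound $c(\FT(\mathcal{F}))\leq 10c(\mathcal{F})^{2}$ is then a routine packaging: $\rank(\FT(\mathcal{F}))\leq c(\mathcal{F})^{2}$, $|\sing(\FT(\mathcal{F}))|\leq c(\mathcal{F})+2$, and the total Swan conductor summed over the (at most $c(\mathcal{F})+2$) singularities of $\FT(\mathcal{F})$ is bounded by $(c(\mathcal{F})+2)c(\mathcal{F})$, so adding these and being generous gives the factor $10$. The main obstacle in the whole argument is the local Fourier transform analysis underlying $iv)$: it requires a careful match between the local inertia representations of $\mathcal{F}$ at each finite singularity (and at $\infty$) and those of $\FT(\mathcal{F})$ at the corresponding dual points, which is the heart of Laumon's theorem and is not at all elementary; every other step reduces to a straightforward application of standard cohomological machinery once this local dictionary is in place.
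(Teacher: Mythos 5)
Your proposal is correct and follows essentially the same route as the paper, which does not prove this theorem from scratch but cites Katz (\emph{Gauss sums, Kloosterman sums, and monodromy groups}, Definition 8.2.3, Theorems 8.2.5 and 8.4.1) for the construction and part $i)$, and the proof of Proposition 8.2 of Fouvry--Kowalski--Michel for $ii)$--$iv)$ --- precisely the Deligne--Laumon construction, Grothendieck--Ogg--Shafarevich count, and local Fourier transform analysis you outline. The only quibble is a harmless arithmetic looseness in the final packaging (from $ii)$ one gets $\rank(\FT(\mathcal{F}))\le c(\mathcal{F})+c(\mathcal{F})^2$ rather than $c(\mathcal{F})^2$), which still comfortably yields the stated bound $10\,c(\mathcal{F})^2$.
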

\begin{proof}
The construction of the sheaf $\FT(\mathcal{F})$ and the proof of $(i)$ can be found in \cite[Definition $8.2.3$]{Kat88}, \cite[Theorem $8.2.5$]{Kat88} and \cite[Theorem $8.4.1$]{Kat88}. For the part $(ii),(iii)$ and $(iv)$, we refer to the proof of \cite[Proposition $8.2$]{FKM15}.
\end{proof}

Here the main examples of trace functions we should keep in mind (many of them already appearing in Paragraph $\ref{sec : examp}$):
\begin{itemize}
\item[$(i)$] For any $f,g\in\mathbb{F}_{p}(T)$, and any multiplicative character $\chi$ on $\Fs$, the function $x\mapsto e(f(x)/p)\chi (g(x))$ is the trace function attached to the Artin-Schreier sheaf $\mathcal{L}_{e_{p}(f(T))\chi (g(T))}$.
\item[$(ii)$] The $r$-th hyper-Kloosterman sums: the map
\[
x\mapsto\Kl_{r}(x;p)=\frac{(-1)^{r-1}}{p^{(r-1)/2}}\sum_{\substack{y_{1},...,y_{r}\in\mathbb{F}_{p}^{\times}\\y_{1}\cdot...\cdot y_{r}=x}}e\Big(\frac{y_{1}+\cdots +y_{r}}{p}\Big)
\]
can be seen as the trace function attached to the \textit{Kloosterman sheaf} $\mathcal{K}\ell_{r}$ (see \cite{Kat88} for the definition of such sheaf and for its basic properties).
\item[$iii)$] The Birch sums, $a\mapsto -\text{Bi}_{p}(a)$, can be seen as the trace function attached to the sheaf $\FT (\mathcal{L}_{e_{p}(T^{3})})$.
\end{itemize}
\subsection{$\lambda$-parameter families}
\begin{defin}
Let $r\geq 2$ be an integer. A middle-extension $\ell$-adic sheaf $\mathcal{K}$ is said of $\Sp_{2r}$-type if
\begin{itemize}
\item[$i)$] $\mathcal{K}$ is  punctually pure of weight $0$,
\item[$ii)$] one has $G_{\mathcal{K}}^{\text{arith}}=G_{\mathcal{K}}^{\text{geom}}=\Sp_{2r}(\mathbb{C})$ (see \cite[Chapter $3$]{Kat88} for the definition of the monodromy groups).
\end{itemize}
\end{defin}
We are finally ready to introduce the notion of a \textit{$\lambda$-parameter family} which is a refinement of \cite[Definition $2.6$]{Per17}:
\begin{defin}
Let $r\geq 2$ and $\lambda\geq 1$ be integers. A family $\{\mathcal{F}_{\mathbf{a}}\}_{\mathbf{a}\in(\Fs)^{\lambda}}$ is said to be a \textit{$\lambda$-parameter family of $\Sp_{2r}$-type} if the following conditions hold
\begin{itemize}
\item[$i)$] for any $\mathbf{a}\in(\Fs)^{\lambda}$, $\mathcal{F}_{\mathbf{a}}$ is a Fourier, irreducible middle-extension $\ell$-adic sheaf on $\overline{\mathbb{A}}_{\mathbb{F}_{p}}^{1}$  punctually pure of weight $0$,
\item[$ii)$] there exists $C\geq1$ such that
\[
c(\mathcal{F}_{\mathbf{a}})\leq C
\] 
for any $\mathbf{a}\in(\Fs)^{\lambda}$. We call the smallest $C$ with this property the \textit{conductor of the family} and we denote it by $C_{\mathfrak{F}}$,
\item[$iii)$] for any $\mathbf{a}'\in(\Fs)^{\lambda-1}$ the $\ell$-adic sheaf $\mathcal{K}_{\mathbf{a}',1}=\text{FT}(\mathcal{F}_{\mathbf{a}',1})$ is of $\Sp_{2r}$-type,
\item[$iv)$] for any $a,z\in\Fs$ and $\mathbf{a}'\in(\Fs)^{\lambda-1}$, there exists $\gamma_{z}\in\PGL_{2}(\F)$ such that 
\[
t_{\mathcal{K}_{\mathbf{a}',a}}(z)=t_{\mathcal{K}_{\mathbf{a}',1}}(\gamma_{z}(a)),
\]
and $\gamma_{z}\neq\gamma_{z'}$ if $z\neq z'$,
\item[$v)$] for any $\mathbf{a}'\in(\Fs)^{\lambda-1}$ and any $z_{1},z_{2}\in \Fs$, if $z_{1}\neq z_{2}$ then
\[
[\gamma_{z_{1}}\gamma_{z_{2}}^{-1}]^{*}\mathcal{K}_{\mathbf{a}',1}\neq_{\text{geom}}\mathcal{K}_{\mathbf{a}',1}\otimes\mathcal{L}
\]
for any $\ell$-adic sheaf $\mathcal{L}$ of rank $1$,
\item[$vi)$] there exists $\delta,\alpha >0$ such that for any interval $I$ of length $|I|\leq p^{1/2+\delta}$, one has
\[
\frac{1}{p^{\lambda}}\sum_{\mathbf{a}\in (\Fs)^{\lambda}}\Big|\frac{1}{\sqrt{p}}\sum_{n\in I}t_{\mathbf{a}}(n)\Big|^{\alpha}\ll p^{-1/2-\delta}.
\]
\end{itemize}
\label{def : 2par}
\end{defin}

\begin{Pro}
Let $\lambda,r\geq 1$ be integers. Let $\{\mathcal{F}_{\mathbf{a}}\}_{\mathbf{a}\in(\Fs)^{\lambda}}$ be a $\lambda$-parameter family of $\Sp_{2r}$-type. Then  Theorem $\ref{Main}$ and $\ref{Lower}$ hold for $\{t_{\mathcal{F}_{\mathbf{a}}}\}_{\mathbf{a}\in(\Fs)^{\lambda}}$. 
\end{Pro}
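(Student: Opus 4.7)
We set $m = p$, $\Omega_p = (\Fs)^\lambda$ so that $|\Omega_p| \asymp p^\lambda$, and $\ph_{\mathbf{a}} = t_{\mathcal{F}_{\mathbf{a}}}$. The plan is to verify Assumptions 1, 2, 4, and Assumption 3 with $\eta = 1/2$, so that case C) of Theorem \ref{Main} (and a fortiori Theorem \ref{Lower}) applies. We take $N = 2r$, the rank of the $\Sp_{2r}$-type sheaves $\mathcal{K}_{\mathbf{a}', a} = \FT(\mathcal{F}_{\mathbf{a}', a})$.

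The easy verifications are Assumptions 1, 2 and 4. Assumption 1 follows from the uniform conductor bound $c(\mathcal{F}_{\mathbf{a}}) \leq C_{\mathfrak{F}}$ combined with Deligne's bound for punctually pure trace functions (the finitely many singular points contribute a bounded correction controlled by $C_{\mathfrak{F}}$). Assumption 4 is condition $(vi)$ of Definition \ref{def : 2par}. For Assumption 2 note that $\widehat{t_{\mathcal{F}_{\mathbf{a}}}}(h) = -t_{\mathcal{K}_{\mathbf{a}}}(h)$ by Theorem \ref{thm : Fouriersheaf}; since $\mathcal{K}_{\mathbf{a}}$ is of $\Sp_{2r}$-type, at each point of lisseness $t_{\mathcal{K}_{\mathbf{a}}}(h)$ is the trace of an element of $\USp_{2r}$, hence a real number in $[-2r, 2r]$, and the boundedly many singular points can be absorbed into $N$.

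The substantive step is Assumption 3. For a tuple $(h_1, \dots, h_k) \in (-p/2, p/2]^k$ with $h_i \neq 0$, split $\mathbf{a} = (\mathbf{a}', a)$; by condition $(iv)$ the inner average over $a \in \Fs$ equals, up to sign,
\[
\frac{1}{p-1} \sum_{a \in \Fs} t_{\mathcal{K}_{\mathbf{a}', 1}}(\gamma_{h_1}(a)) \cdots t_{\mathcal{K}_{\mathbf{a}', 1}}(\gamma_{h_k}(a)),
\]
i.e.\ the average trace function of $\bigotimes_{i} [\gamma_{h_i}]^{*} \mathcal{K}_{\mathbf{a}', 1}$. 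Condition $(v)$ ensures that for distinct $h_i$ the pullbacks $[\gamma_{h_i}]^{*} \mathcal{K}_{\mathbf{a}', 1}$ are pairwise not geometrically isomorphic up to a rank-$1$ twist; combined with the $\Sp_{2r}$-type property of each factor, the Goursat--Kolchin--Ribet theorem (in Katz's formulation) identifies the geometric monodromy group of the tensor product with the full diagonal product $\Sp_{2r}^{k'}$, where $k'$ is the number of distinct values among the $h_i$'s, acting on the tensor product of standard representations. Deligne's equidistribution theorem and the Riemann hypothesis for sheaves on curves then yield
\[
\frac{1}{p-1} \sum_{a \in \Fs} t_{\mathcal{K}_{\mathbf{a}', 1}}(\gamma_{h_1}(a)) \cdots t_{\mathcal{K}_{\mathbf{a}', 1}}(\gamma_{h_k}(a)) = \ex\!\left(\X(h_1) \cdots \X(h_k)\right) + O\!\left(\frac{C_1^k}{\sqrt{p}}\right),
\]
where the $\{\X(h)\}_{h \in \mathbb{Z}^{*}}$ are I.I.D.\ copies of $\Tr(M)$ for $M$ Haar-distributed in $\USp_{2r}$, and $C_1$ depends only on $C_{\mathfrak{F}}$ and $r$. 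Averaging over $\mathbf{a}' \in (\Fs)^{\lambda - 1}$ yields Assumption 3 with $\eta = 1/2$. Assumptions 3a and 3b for $\X$ were established in Lemma \ref{traceUSp} and the discussion preceding it: 3b uses $-I \in \USp_{2r}$ and $\Tr(-M) = -\Tr(M)$, while 3a is exactly Lemma \ref{traceUSp}.

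The main obstacle is the precise application of Goursat--Kolchin--Ribet to the tensor product (including tuples with repeated $h_i$'s, handled by collapsing coincidences and reducing to the pairwise-distinct case), together with bounding the conductor of $\bigotimes_i [\gamma_{h_i}]^{*} \mathcal{K}_{\mathbf{a}', 1}$ multiplicatively by $C_1^k$ so that Deligne's estimate delivers the error term $O(C_1^k / \sqrt{p})$ uniformly in $k$.
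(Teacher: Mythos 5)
Your proposal is correct and follows essentially the same route as the paper: verify Assumptions 1, 2 and 4 from the conductor bound, \cite[Lemma 3.2]{Per17}-type arguments and condition $(vi)$, and establish Assumption 3 with $\eta=1/2$ by using conditions $(iv)$--$(v)$ to reduce to the tensor product $\mathcal{H}=\bigotimes_i[\gamma_{h_i}]^{*}\mathcal{K}_{\mathbf{a}',1}$, identifying its monodromy via Goursat--Kolchin--Ribet (the paper cites \cite[4.2.1]{Per17} for this), and bounding $c(\mathcal{H})\ll C_{\mathfrak{F}}^{8k}$ so that Deligne's theorem gives the $O(C_1^k/\sqrt{p})$ error term. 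The step you flag as the main obstacle is exactly the one the paper carries out explicitly (rank, singularities and Swan conductors of $\mathcal{H}$ via \cite[Lemma 1.3]{Kat88}), and your treatment of it is the intended one.
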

\begin{proof}
It is enough to show that the set $\{t_{\mathcal{F}_{\mathbf{a}}}\}_{\mathbf{a}\in(\Fs)^{\lambda}}$ satisfies Assumptions 1, 2, 3 and 4. We start with Assumption 1: for any $\mathbf{a}\in(\Fs)^{\lambda}$ one has that $\|t_{\mathcal{F}_{\mathbf{a}}}\|_{\infty}\leq\rank (\mathcal{F}_{\mathbf{a}})$ thanks to \cite[Lemma $1.8.1$]{Del80}. Hence, using $(ii)$ in the definition of a $\lambda$-parameter family we get $\|t_{\mathcal{F}_{\mathbf{a}}}\|_{\infty}\leq c (\mathcal{F}_{\mathbf{a}})\leq C_{\mathfrak{F}}$. Assumption 2 is just an application of \cite[Lemma $3.2$]{Per17}. Let us check Assumption 3. Let $(h_{1},...,h_{k})\in(-p/2,p/2]^{k}$ with $h_{i}\neq 0$ for $i=1,...,k$ and consider
\begin{equation}
\frac{1}{(p-1)^{\lambda}}\sum_{\mathbf{a}'\in(\Fs)^{\lambda-1}}\sum_{a\in\Fs}t_{\mathcal{K}_{\mathbf{a}',a}}(h_{1})\cdots t_{\mathcal{K}_{\mathbf{a}',a}}(h_{k}).
\label{eq : assum2}
\end{equation}
We know that for any $\mathbf{a}'\in(\Fs)^{\lambda-1}$ and any $a,h_{i}\in \Fs$, it holds that $t_{\mathcal{K}_{\mathbf{a}',a}}(h_{i})=t_{\mathcal{K}_{\mathbf{a}',1}}(\gamma_{h_{i}}(a))$ for some $\gamma_{h_{i}}\in\PGL_{2}(\F)$ ($(iv)$ in Definition $\ref{def : 2par}$). Thus we can rewrite  equation $(\ref{eq : assum2})$ as
\[
\frac{1}{(p-1)^{\lambda}}\sum_{\mathbf{a}\in(\Fs)^{\lambda-1}}\sum_{a\in\Fs}t_{\mathcal{K}_{\mathbf{a}',1}}(\gamma_{h_{1}}(a))\cdots t_{\mathcal{K}_{\mathbf{a}',1}}(\gamma_{h_{k}}(a)).
\]
Thanks to the property $(v)$ in Definition $\ref{def : 2par}$, we can argue as in \cite[$4.2.1$]{Per17}  to obtain
\[
(-1)^k\sum_{a\in\Fs}t_{\mathcal{K}_{\mathbf{a}',1}}(\gamma_{h_{1}}(a))\cdots t_{\mathcal{K}_{\mathbf{a}',1}}(\gamma_{h_{k}}(a))=\ex(\X (h_{1})...\X (h_{k}))(p-1)+O(c(\mathcal{H})\sqrt{p}),
\]
where $\mathcal{H}=[\gamma_{h_{1}}]^{*}\mathcal{K}_{\mathbf{a}',1}\otimes\cdots\otimes [\gamma_{h_{k}}]^{*}\mathcal{K}_{\mathbf{a}',1}$ and the $\X (h_{i})$'s are independent random variables uniformly distributed with respect to the Haar measure on $\USp_{2r}$ which satisfy Assumptions 3a (Lemma $\ref{traceUSp}$) and 3b. Let us bound $c(\mathcal{H})$. Recall that
\[
c(\mathcal{H})=\rank(\mathcal{H})+|\sing(\mathcal{H})|+\sum_{x}\text{Swan}_{x}(\mathcal{H}).
\]
One has that $\rank (\mathcal{H})=\prod_{i}\rank ([\gamma_{h_{i}}]^{*}\mathcal{K}_{\mathbf{a}',1})=\rank (\mathcal{K}_{\mathbf{a}',1})^{k}$ and that $|\sing (\mathcal{H})|\leq\sum_{i}|\sing ([\gamma_{h_{i}}]^{*}\mathcal{K}_{\mathbf{a}',1})|\leq k|\sing (\mathcal{K}_{\mathbf{a}',1})|$. On the other hand, \cite[Lemma $1.3$]{Kat88} implies that
\[
\Swan_{x}(\mathcal{H})\leq\rank (\mathcal{H})\cdot\Big(\sum_{i=1}^{k}\Swan_{x}([\gamma_{h_{i}}]^{*}\mathcal{K}_{\mathbf{a}',1})\Big)\leq \rank (\mathcal{K}_{\mathbf{a}',1})^{k}kc(\mathcal{K}_{\mathbf{a}',1}).
\]
Thus we have that
\[
\begin{split}
c(\mathcal{H})&\leq \rank (\mathcal{K}_{\mathbf{a}',1})^{k}+ k|\sing (\mathcal{K}_{\mathbf{a}',1})|+ k|\sing (\mathcal{K}_{\mathbf{a}',1})|\cdot\rank (\mathcal{K}_{\mathbf{a}',1})^{k}kc(\mathcal{K}_{\mathbf{a}',1})\\&\leq c(\mathcal{K}_{\mathbf{a}',1})^{k}+kc(\mathcal{K}_{\mathbf{a}',1})+k^2c(\mathcal{K}_{\mathbf{a}',1})^{k+2}\\&\ll C_{\mathfrak{F}}^{8k},
\end{split}
\]
where in the last step we used property $(ii)$ in Definition \ref{def : 2par} together with Theorem $\ref{thm : Fouriersheaf}$. Thus $(\ref{eq : assum2})$ becomes
\[
\frac{(-1)^k}{(p-1)^{\lambda}}\sum_{\mathbf{a}\in(\Fs)^{\lambda-1}}\sum_{a\in\Fs}t_{\mathcal{K}_{\mathbf{a}',a}}(h_{1})\cdots t_{\mathcal{K}_{\mathbf{a}',a}}(h_{k})=\ex\left(\X(h_1) \dots \X(h_k)\right)+ O\left(\frac{C_{\mathfrak{F}}^{8k}}{\sqrt{p}}\right),
\]
as we wanted. Finally, Assumption 4 simply follows from the definition of a $\lambda$-parameter family (property $(vi)$).

\end{proof}

\subsection{Examples of $1$-parameter families}
Let $g\in \mathbb{Z}[t]$ be an odd polynomial of degree $2r+1$, such that $r\ge 1$. For $p$ large enough
\[
\{\mathcal{L}_{e_{p}(\beta T+g(T))}\}_{\beta\in\Fs}
\]
is a $1$-parameter family of $\Sp_{2r}$-type.
\begin{itemize}
\item[$i)$] For any $\beta\in\Fs$, the Artin-Schreier sheaf $\mathcal{L}_{e_{p}(\beta T+g(T))}$ is a Fourier, irreducible middle-extension $\ell$-adic sheaf on $\overline{\mathbb{A}}_{\mathbb{F}_{p}}^{1}$ punctually pure of weight $0$. Moreover its trace function is $t_{\mathcal{F}_{\beta}}: x\mapsto e_{p}(\beta x+g(x))$.
\item[$ii)$] One has that $\sing (\mathcal{L}_{e_{p}(\beta T+g(T))})=\{\infty\}$ for any $\beta\in\Fs$. Moreover, if $p>2r+1$ then $\Swan_{\infty}(\mathcal{L}_{e_{p}(\beta T+g(T))})=\deg g=2r+1$. Thus, $c(\mathcal{L}_{e_{p}(\beta T+g(T))})= 2r+3$ for any $\beta \in\Fs$.
\item[$iii)$] In \cite[$7.13$ $\Sp$-example $(2)$]{Kat90}, it is shown that the sheaf $\mathcal{K}_{1}$ is such that $G_{\mathcal{K}_{1}}^{\text{geom}}=\Sp_{2r}(\mathbb{C})$. Moreover, as explained in \cite[7.2.(1)]{Per17}, this implies that $G_{\mathcal{K}_{1}}^{\text{geom}}=G_{\mathcal{K}_{1}}^{\text{arith}}$.
\item[$iv)$] Let $\beta\in\Fs$. By definition of the Fourier transform we have that
\[
\begin{split}
t_{\mathcal{K}_{\beta}}(z)&=-\frac{1}{\sqrt{p}}\sum_{x\in\F}e_{p}(x+ g(x)+(\beta+z-1)x)\\&=t_{\mathcal{K}_{1}}(\beta+z-1)
\end{split}
\]
and therefore $t_{\mathcal{K}_{\beta}}(z)=t_{\mathcal{K}_{1}}(\gamma_{z}(\beta))$ for $\gamma_{z}:=\begin{pmatrix}1 & z-1\\ 0 & 1\end{pmatrix}$.
\item[$v)$] This is done in \cite[Proposition $7.5$]{Per17}.
\item[$vi)$] By Weyl's method (see for example \cite[Lemma $20.3$]{IK}), there exists $\eta>0$ such that
\[
\Big|\frac{1}{\sqrt{p}}\sum_{n\in I}e_{p}(n\beta +g(n))\Big|\ll p^{-\eta}
\]
for any interval $I$ of length $|I|\leq p^{1/2+\eta}$. Moreover, $\eta$ and the implied constant depend only on $\deg g$. Thus, for any $\alpha$,
\[
\frac{1}{p-1}\sum_{\beta\in \Fs}\Big|\frac{1}{\sqrt{p}}\sum_{n\in I}e_{p}(n\beta +g(n))\Big|^{\alpha}\ll p^{-\alpha\eta}.
\] 
Choosing a suitable $\alpha >1$, property $(vi)$ in Definition \ref{def : 2par} is satisfied.
\end{itemize}

\subsection{Examples of $2$-parameter families}
In this section we present some examples of families of $\Sp_{2r}$-type. In the following, for a sheaf $\mathcal{F}_{a,b}$ we denote $\mathcal{K}_{a,b}=\FT (\mathcal{F}_{a,b})$.
\subsubsection{Exponential sums II}
\label{ex : 1}
Let $d\in\mathbb{N}_{\geq 1}$ with $d$ odd. For $p$ large enough
\[
\{\mathcal{L}_{e_{p}(bT+(a\overline{T})^{d}))}\}_{(a,b)\in\Fs\times\Fs}
\]
is a $2$-parameter family of $\Sp_{d+1}$-type.
\begin{itemize}
\item[$i)$] For any $(a,b)\in\Fs\times\Fs$, the Artin-Schreier sheaf $\mathcal{L}_{e_{p}(bT+(a\overline{T})^{d})}$ is a Fourier, irreducible middle-extension $\ell$-adic sheaf on $\overline{\mathbb{A}}_{\mathbb{F}_{p}}^{1}$ punctually pure of weight $0$. Moreover, its trace function is $t_{\mathcal{F}_{a,b}}: x\mapsto e_{p}(bx+(a\overline{x})^{d})$.
\item[$ii)$] One has that $\sing (\mathcal{L}_{e_{p}(bT+(a\overline{T})^{d})})=\{0,\infty\}$ for any $(a,b)\in\Fs\times\Fs$. Moreover, if $d<p$ then
\[
\Swan_{0}(\mathcal{L}_{e_{p}(bT+(a\overline{T})^{d})})=d,\quad \Swan_{\infty}(\mathcal{L}_{e_{p}(bT+(a\overline{T})^{d})})=1.
\]
Thus $c(\mathcal{L}_{e_{p}(bT+(a\overline{T})^{d})})= d+4$ for any $(a,b)\in\Fs\times \Fs$.
\item[$iii)$] In \cite[Theorem $7.12.3.1$]{Kat90} is it shown that for any $a\in\Fs$,
\[
G_{\mathcal{K}_{a,1}}^{\text{geom}}=
\begin{cases}
\Sp_{d+1}(\mathbb{C})		&\text{if $d$ is odd}\\
\SL_{d+1}(\mathbb{C})		&\text{if $d$ is even}.
\end{cases}
\]
Hence, when $d$ is odd one concludes also that $G_{\mathcal{K}_{a,1}}^{\text{geom}}=G_{\mathcal{K}_{a,1}}^{\text{arith}}$ (by \cite[$7.2.(1)$]{Per17}).
\item[$iv)$] Let $(a,b)\in\Fs\times\Fs$. By definition of the Fourier transform we have
\[
\begin{split}
t_{\mathcal{K}_{a,b}}(z)&=-\frac{1}{\sqrt{p}}\sum_{x\in\mathbb{F}_{p}^{\times}}e_{p}(bx+(a\overline{x})^{d}+xz)\\&=-\frac{1}{\sqrt{p}}\sum_{x\in\mathbb{F}_{p}^{\times}}e_{p}((a\overline{x})^{d}+(b+z)x)\\&=t_{\mathcal{K}_{a,1}}(b+z-1)
\end{split}
\]
and thus $t_{\mathcal{K}_{a,b}}(z)=t_{\mathcal{K}_{a,1}}(\gamma_{z}(b))$ for $\gamma_{z}=\begin{pmatrix}1 & z-1\\ 0 & 1\end{pmatrix}$.
\item[$v)$] For any $a,z_{1},z_{2}\neq 0$ with $z_{1}\neq z_{2}$, we need to prove that
\begin{equation}
[\gamma_{z_{1}}\gamma_{z_{2}}^{-1}]^{*}\mathcal{K}_{a,1}\neq_{\text{geom}}\mathcal{K}_{a,1}\otimes\mathcal{L},
\label{eq : pro(iv)}
\end{equation}
for any $\ell$-adic sheaf $\mathcal{L}$ of rank $1$, where $\gamma_{z_{1}},\gamma_{z_{2}}$ are as in $(iv)$. First of all, observe that $\gamma_{z_{1}}\gamma_{z_{2}}^{-1}=\begin{pmatrix}1 & z_{1}-z_{2}\\ 0 & 1\end{pmatrix}$. Let us denote $\mathcal{K}_{a,0}=\FT (\mathcal{L}_{e_{p}((a\overline{T})^{d})})$. Arguing as in $(iv)$, one has that $\mathcal{K}_{a,1}=_{\text{geom}}[\tau]^{*}\mathcal{K}_{a,0}$ with $\tau=\begin{pmatrix}1 & 1\\ 0 & 1\end{pmatrix}$. Thus, in order to check $(\ref{eq : pro(iv)})$ it is enough to show that
\[
[\gamma_{z}']^{*}\mathcal{K}_{a,0}\neq_{\text{geom}}\mathcal{K}_{a,0}\otimes\mathcal{L},
\]
for any $\ell$-adic sheaf $\mathcal{L}$ of rank $1$, where $\gamma_{z}'=\begin{pmatrix}1 & z\\ 0 & 1\end{pmatrix}$ with $z\neq 0$. Since $\mathcal{L}_{e_{p}((a\overline{T})^{d})}$ is lisse at $\{\infty\}$, then $\mathcal{K}_{a,0}=\FT(\mathcal{L}_{e_{p}((a\overline{T})^{d})})$ is singular at $\{0,\infty\}$ (\cite[Corollary $8.5.8$]{Kat88}). Moreover, using Laumon's theory of the $\ell$-adic Fourier transform, one gets that $\mathcal{K}_{a,0}$ has an unique slope at $d/(d+1)$ at $\infty$, and that it is tame at $0$ (\cite[Theorem $7.5.4$]{Kat90}). Thus,
\[
\Swan_{0}(\mathcal{K}_{a,0})=0,\qquad \Swan_{\infty}(\mathcal{K}_{a,0})=d.
\]
Then for any $a,z$ one has $\sing ([\gamma_{z}']^{*}\mathcal{K}_{a,0})=\{-z ,\infty\}$. Moreover, $[\gamma_{z}']^{*}\mathcal{K}_{a,0}$ has an unique slope at $d/(d+1)$ at $\infty$ and it is tame at $-z$. By contradiction, assume that there exists a rank $1$ sheaf $\mathcal{L}$ such that $[\gamma_{z}']^{*}\mathcal{K}_{a,0}=_{\text{geom}}\mathcal{K}_{a,0}\otimes\mathcal{L}$.  Without loss of generality, we may assume that $\mathcal{L}$ is punctually pure of weight $0$ since $\mathcal{K}_{a,0}$ is punctually pure of weight $0$. From the discussion above it would follow that $\{0,-z\}\subset \sing (\mathcal{L})\subset\{0,-z,\infty\}$, and that $\mathcal{L}$ is tame everywhere. At this point it is useful to compute some data about $\FT(\mathcal{L})$:
\begin{itemize}
\item[$a)$] $\sing (\FT (\mathcal{L}))=\{0,\infty\}$, since $\mathcal{L}$ is tame at $\infty$ (\cite[Corollary $8.5.8$]{Kat88}),
\item[$b)$] $\rank (\FT (\mathcal{L}))=\dim(H_{c}^{1}(\overline{\mathbb{A}}_{\mathbb{F}_{p}}^{1},\mathcal{L}\otimes\mathcal{L}_{e_{p}(\alpha T)}))$ for any $\alpha\neq 0$. On the other hand, the Grothendieck-Ogg-Shafarevich formula  (\cite[Chapter $14$]{Kat12}) implies that
\[
\begin{split}
\dim(H_{c}^{1}(\overline{\mathbb{A}}_{\mathbb{F}_{p}}^{1},\mathcal{L}\otimes\mathcal{L}_{e_{p}(\alpha T)}))&=-\rank (\mathcal{L}\otimes\mathcal{L}_{e_{p}(\alpha T)})\\&+\sum_{x\in\overline{\mathbb{A}}_{\mathbb{F}_{p}}^{1}}\Drop_{x}(\mathcal{L}\otimes\mathcal{L}_{e_{p}(\alpha T)})\\&+\sum_{x\in\overline{\mathbb{P}}_{\mathbb{F}_{p}}^{1}}\Swan_{x}(\mathcal{L}\otimes\mathcal{L}_{e_{p}(\alpha T)})\\&=-1+2+1=2,
\end{split}
\]
where for a constructible $\ell$-adic sheaf $\mathcal{F}$, $\Drop_{x}(\mathcal{F})=\rank\mathcal{F}-\dim\mathcal{F}_{x}$. Hence, $\rank (\FT (\mathcal{L}))=2$.
\item[$c)$] Since $\mathcal{L}$ is not lisse on $\overline{\mathbb{A}}_{\F}^{1}\setminus\{0\}$, \cite[Corollary $8.5.8$]{Kat88} implies that $\FT (\mathcal{L})(\infty)$ has a break at $1$. Thus $\Swan_{\infty}(\FT (\mathcal{L}))\geq 1$. 
\end{itemize}
Since we are assuming that $[\gamma_{z}']^{*}\mathcal{K}_{a,0}=_{\text{geom}}\mathcal{K}_{a,0}\otimes\mathcal{L}$, it follows that 
\[\FT (\mathcal{K}_{a,0}\otimes\mathcal{L})=_{\text{geom}}\FT ([\gamma_{z}']^{*}\mathcal{K}_{a,0})=\mathcal{L}_{e_{p}(-zT-(a\overline{T})^d)}.
\]
Hence, for any $n\geq 1$ there exists a complex number $\omega_{n}$ of modulus $1$ such that for any $s\in\Fs$ one has
\begin{equation}
\begin{split}
e_{p}(\Tr_{\Fn/\F}(-zs-(a\overline{s})^d))&= \FT (t_{[\gamma_{z}']^{*}\mathcal{K}_{a,0},n})(s)\\&=\omega_{n} \cdot\FT (t_{\mathcal{K}_{a,0},n}\cdot t_{\mathcal{L},n})(s)\\&=-\frac{\omega_{n}}{p^{n/2}}\sum_{y}\FT (t_{\mathcal{L},n})(y)\FT(t_{\mathcal{K}_{a,0},n})(s-y)\\&=-\frac{\omega_{n}}{p^{n/2}}\sum_{y}\FT (t_{\mathcal{L},n})(y)e_{p}(\Tr_{\Fn/\F}((a(\overline{y-s}))^{d})).
\end{split}
\label{eq : fourid}
\end{equation}

The next step is to show that we can see $\FT (\mathcal{K}_{a,0}\otimes\mathcal{L})$ as a cohomological transform in the sense of \cite{FKM19}. Let us start with the Artin-Schreier sheaf on $\overline{\mathbb{A}}_{\F}^{1}\times\overline{\mathbb{A}}_{\F}^{1}$, $\mathcal{L}_{e_{p}((a(\overline{Y-T}))^{d})}$ (see \cite[Definition $2.2$]{FKM19}), and consider the $\ell$-adic sheaf $\mathcal{G}:=R^{1}p_{1,!}(p_{2}^{*}\FT(\mathcal{L})\otimes\mathcal{L}_{e_{p}((a(\overline{Y-T}))^{d})})(1/2)$, where $p_{1},p_{2}$ are the two projections $p_{i}:\overline{\mathbb{A}}_{\F}^{1}\times\overline{\mathbb{A}}_{\F}^{1}\rightarrow \overline{\mathbb{A}}_{\F}^{1}$.
As pointed out in \cite[Remark $2.4.(1)$]{FKM19}, $\mathcal{G}$ is a constructible $\ell$-adic sheaf. Let $j:U\hookrightarrow\overline{\mathbb{A}}^{1}_{\F}$ be the largest dense open subset on which $\mathcal{G}$ is lisse (we remark that $\sing (\mathcal{G})\leq c(\mathcal{G})\ll_{d}1$, thanks to \cite[Theorem $2.3$]{FKM19}), and consider the sheaf $\mathcal{H}=j_{*}j^{*}\mathcal{G}$; this is a middle-extension $\ell$-adic sheaf punctually pure of weight $0$ on $U$, such that $\rank \mathcal{H}=\rank{\mathcal{G}}$ and for any $n\geq 1$ and any $s\in U(\Fn)$
\[
t_{\mathcal{H},n}(s)=t_{\mathcal{G},n}(s)=-\frac{1}{p^{n/2}}\sum_{y}\FT (t_{\mathcal{L},n})(y)e_{p}(\Tr_{\Fn/\F}((a(\overline{y-s}))^{d})),
\]  
thanks to \cite[Corollary $5.3$]{FKM19}. Moreover, one has that
\[
\begin{split}
\frac{1}{p^n}\sum_{s\in\Fn}|t_{\mathcal{H},n}(s)|^{2}&=\frac{1}{p^n}\sum_{s\in U(\Fn)}|t_{\mathcal{H},n}(s)|^{2}+O_{d}(1/p^n)\\&=\frac{1}{p^n}\sum_{s\in U(\Fn)}|e_{p}(\Tr_{\Fn/\F}(-sx-(a\overline{s})^d))|^{2}+O_{d}(1/p^n)\\&=1+O_{d}(1/p^n).
\end{split}
\]
Hence, using \cite[Lemma $7.0.3$]{Kat96}, we conclude that $\mathcal{H}$ is geometrically irreducible.\newline
 Since the trace functions attached to the irreducible middle-extension sheaves $\mathcal{L}_{e_{p}(-zT-(a\overline{T})^d)}$ and $\mathcal{H}$ coincide up to a multiplicative factor of modulus $1$ on $U(\F)$ and $c(\mathcal{H}), c(\mathcal{L}_{e_{p}(-zT-(a\overline{T})^d)})\ll_{d} 1$ (\cite[Theorem $2.3$]{FKM19}), it follows that $\mathcal{L}_{e_{p}(-zT-(a\overline{T})^d)}=_{geom}\mathcal{H}$ thanks to \cite[Corollary 3.6]{FKM13}. In particular, $\rank \mathcal{H}=\rank (\mathcal{L}_{e_{p}(-zT-(a\overline{T})^d)})$. To get a contradiction it is enough to show that $\rank\mathcal{H}=\rank\mathcal{G}\geq d+1$: in this case we would get that $1=\rank (\mathcal{L}_{e_{p}(-zT-(a\overline{T})^d)})=\rank{\mathcal{G}}\geq d+1>1$ which is absurd. We know that $\rank\mathcal{G}$ is equal to the dimension of the stalk $\mathcal{G}_{s}$ for any $s\in\mathbb{A}_{\mathbb{F}_{p}}^{1}$, where $\mathcal{G}$ is lisse. Using the Proper Base-Change Theorem (\cite[Arcata, IV, Theorem $5.4$]{Del77}) one gets
\[
\begin{split}
\mathcal{G}_{s}&=(R^{1}p_{1,!}(p_{2}^{*}\FT(\mathcal{L})\otimes\mathcal{L}_{e_{p}((a(\overline{Y-T}))^{d})}))_{s}\\&=H_{c}^{1}(\overline{\mathbb{A}}_{\mathbb{F}_{p}}^{1},\FT (\mathcal{L})\otimes\mathcal{L}_{e_{p}((a(\overline{Y-s}))^{d})}).
\end{split}
\]
Thus we need to compute
\[
N:=\dim (H_{c}^{1}(\overline{\mathbb{A}}_{\mathbb{F}_{p}}^{1},\FT (\mathcal{L})\otimes\mathcal{L}_{e_{p}((a(\overline{Y-s}))^{d})})),
\] 
for some $s\notin\sing (\mathcal{G})$. To simplify the notation let us denote $\mathcal{N}:=\FT (\mathcal{L})\otimes\mathcal{L}_{e_{p}((a(\overline{Y-s}))^{d})}$ where $s\in\overline{\mathbb{A}}_{\mathbb{F}_{p}}^{1}\setminus\{0\}$.\newline Observe that $\rank (\mathcal{N})=\rank (\FT (\mathcal{L}))\cdot\rank (\mathcal{L}_{e_{p}((a(\overline{Y-s}))^{d})})=2$. Moreover, since $\sing (\FT (\mathcal{L}))=\{0,\infty\}$ and $\sing(\mathcal{L}_{e_{p}((a(\overline{Y-s}))^{d})})=\{s\}$ we have that $\sing (\mathcal{N})=\{0,s,\infty\}$ and
\[
\begin{split}
\Swan_{0}(\mathcal{N})=\Swan_{0}(&\FT (\mathcal{L}))=0,\quad \Swan_{\infty}(\mathcal{N})=\Swan_{\infty}(\FT (\mathcal{L}))\geq 1, \\&
\Swan_{s}(\mathcal{N})=\Swan_{s}(\mathcal{L}_{e_{p}((a(\overline{Y-s}))^{d})})=d.
\end{split}
\]
Thus, using the Grothendieck-Ogg-Shafarevich formula we get
\[
\begin{split}
N &=-\rank (\mathcal{N})+\Drop_{0}(\mathcal{N})+\Drop_{s}(\mathcal{N})\\&+\Swan_{\infty}(\mathcal{N})+\Swan_{0}(\mathcal{N})+\Swan_{s}(\mathcal{N}),\\&\geq -2+1+1+1+d=d+1
\end{split}
\]
as we wanted.
\item[$vi)$] We start by bounding
\[
\begin{split}
M_{4}&:=\frac{1}{p^2}\sum_{a\in\F}\sum_{b\in\F}\Big|\frac{1}{\sqrt{p}}\sum_{n\in I}e_{p}(bn+a\overline{n}^{d})\Big|^{4}\\&=\frac{1}{p^{4}}\sum_{a\in\F}\sum_{b\in\F}\sum_{n_{1},n_{2},m_{1},m_{2}\in I}e_{p}(b(n_{1}+n_{2}-m_{1}-m_{2})+a(\overline{n_{1}}^{d}+\overline{n_{2}}^{d}-\overline{m_{1}}^{d}-\overline{m_{2}}^{d})).
\end{split}
\]
We use the same strategy as in \cite[page $1505$]{KoSa}: the orthogonality of the additive characters implies that
\[
M_{4}=\frac{1}{p^{3}}\sum_{a\in\F}\sum_{\substack{n_{1},n_{2},m_{1},m_{2}\in I\\n_{1}+n_{2}=m_{1}+m_{2}}}e_{p}(a(\overline{n_{1}}^{d}+\overline{n_{2}}^{d}-\overline{m_{1}}^{d}-\overline{m_{2}}^{d}))
\]
and then
\[
M_{4}=\frac{1}{p^{2}}\sum_{\substack{n_{1},n_{2},m_{1},m_{2}\in I\\n_{1}+n_{2}=m_{1}+m_{2}\\ \overline{n}_{1}^{d}+\overline{n}_{2}^{d}=\overline{m}_{1}^{d}+\overline{m}_{2}^{d}}}1.
\]
For $n_{1}+n_{2}\neq 0$, the system
\[
\begin{cases}
n_{1}+n_{2}=m_{1}+m_{2}\\
\overline{n}_{1}^{d}+\overline{n}_{2}^{d}=\overline{m}_{1}^{d}+\overline{m}_{2}^{d}
\end{cases}
\]
has at most $2d$ pairs of solutions $(m_{1},m_{2})$. On the other hand, if $n_{1}+n_{2}= 0$ then $m_{1}+m_{2}= 0$. Thus we can bound $M_4$ as
$M_{4}\ll_{d} |I|^{2}p^{-2}$. Now, by positivity we get that
\[
\frac{1}{p^2}\sum_{a\in\Fs}\sum_{b\in\Fs}\Big|\frac{1}{\sqrt{p}}\sum_{n\in I}e_{p}(bn+(a\overline{n})^{d})\Big|^{4}\leq (d,p-1)M_{4}\ll_{d} |I|^{2}p^{-2}.
\]
Choosing $|I|\leq p^{1/2+1/6}$ we obtain the result.
\end{itemize}

\subsubsection{Hyper-Kloosterman sums}
For any $r\geq 2$, let $\mathcal{K}\ell_{r}$ denote the $r$-th Kloosterman sheaf. For any $r\geq 3$ odd the family
\[
\{[x\mapsto \overline{ax}]^{*}\mathcal{K}\ell_{r}\otimes\mathcal{L}_{e_{p}(bT)}\}_{(a,b)\in\mathbb{F}_{p}^{\times}\times\mathbb{F}_{p}^{\times}}
\]
is a $2$-parameter family of $\Sp_{r+1}$-type.
\begin{itemize}
\item[$i)$] For any $(a,b)\in\Fs\times\Fs$, the sheaf $[x\mapsto\overline{ax}]^{*}\mathcal{K}\ell_{r}\otimes\mathcal{L}_{e_{p}(bT)}$ is a Fourier, irreducible middle-extension $\ell$-adic sheaf on $\overline{\mathbb{A}}_{\mathbb{F}_{p}}^{1}$ punctually pure of weight $0$. Moreover, the trace function attached to $[x\mapsto \overline{ax}]^{*}\mathcal{K}\ell_{r}\otimes\mathcal{L}_{e_{p}(bT)}$ is given by 
\[
t_{\mathcal{F}_{a,b}}:x\mapsto \Kl_{r}(\overline{ax};p)e_{p}(bx).
\]
\item[$ii)$] Thanks to \cite[Proposition $8.2$]{FKM15} and \cite[$11.0.2$]{Kat88}, one has that $c(\mathcal{F}_{a,b})\leq 5c([x\mapsto \overline{ax}]^{*}\mathcal{K}\ell_{r})^{2}c(\mathcal{L}_{e_{p}(bT)})^{2}=45c([x\mapsto \overline{ax}]^{*}\mathcal{K}\ell_{r})^{2}=45(r+3)^{2}$.
\item[$iii)$] We start with computing the Fourier transform of $t_{\mathcal{F}_{a,b}}$:
\[
\begin{split}
t_{\mathcal{K}_{a,b}}(z)&=-\frac{1}{\sqrt{p}}\sum_{x\in\Fs}\Kl_{r}(\overline{ax};p)e_{p}((b+z)x)\\&=-\frac{1}{p^{r/2}}\sum_{x\in\Fs}\Big(\sum_{x_{1},...,x_{r-1}\in\mathbb{F}_{p}^{\times}}e_{p}(x_{1}+\cdots+x_{r-1}+\overline{axx_{1}\cdots x_{r-1}})\Big)e_{p}((b+z)x)\\&=-\frac{1}{p^{r/2}}\sum_{x,x_{1},...,x_{r-1}\in\mathbb{F}_{p}^{\times}}e_{p}(x_{1}+\cdots+x_{r-1}+(b+z)x+\overline{axx_{1}\cdots x_{r-1}}).
\end{split}
\]
If $z\neq -b$, then we apply the change of variables $t=x(b+z)$ getting
\[
\begin{split}
t_{\mathcal{K}_{a,b}}(z)&=-\frac{1}{p^{r/2}}\sum_{t,x_{1},...,x_{r-1}\in\mathbb{F}_{p}^{\times}}e_{p}(x_{1}+\cdots+x_{r-1}+t+(b+z)\overline{atx_{1}\cdots x_{r-1}})\\&=\Kl_{r+1}(\overline{a}(b+z);p).
\end{split}
\]
Thus, we have that $t_{\mathcal{K}_{a,b}}(z)=\Kl_{r+1}(\overline{a}(b+z);p)=\Kl_{r+1}(\overline{a}(1+\gamma_{z}(b));p)=t_{[\gamma_{z}]^{*}\mathcal{K}_{a,1}}(b)$, where $\gamma_{z}=\begin{pmatrix}1 & z-1\\0 & 1\end{pmatrix}$.
\item[$iv)$] For any $a,b\in\mathbb{F}_{p}^{\times}$, the monodromy of $\mathcal{K}_{a,b}=[\gamma_{a,b}]^{*}\mathcal{K}\ell_{r+1}$ is the same as the one of $\mathcal{K}\ell_{r+1}$. Thus, $G_{\mathcal{K}_{a,b}}^{\text{arith}}=G_{\mathcal{K}_{a,b}}^{\text{geom}}$ and
$G_{\mathcal{K}_{a,b}}^{\text{geom}}=\Sp_{r+1}(\mathbb{C})$.
\item[$v)$]  We need to show that for any $a,z_{1},z_{2}\in \Fs$ with $z_{1}\neq z_{2}$, one has
\[
[\gamma_{z_{1}}\gamma_{z_{2}}^{-1}]^{*}\mathcal{K}_{a,1}\neq_{\text{geom}}\mathcal{K}_{a,1}\otimes\mathcal{L},
\]
for any $\ell$-adic sheaf $\mathcal{L}$ of rank $1$. This is just a consequence of (\cite[Proposition $3.6$]{FKM15b}).
\item[$vi)$] We compute
\[
\begin{split}
M_{4}&=\frac{1}{p^{2}}\sum_{a\in\Fs}\sum_{b\in\F}\Big|\frac{1}{\sqrt{p}}\sum_{n\in I}\Kl_{r}(\overline{an};p)e_{p}(bn)\Big|^{4}\\&=\frac{1}{p^{4}}\sum_{a\in\Fs}\sum_{b\in\F}\sum_{n_{1},n_{2},m_{1},m_{2}\in I}\Kl_{r}(\overline{an_{1}};p)\Kl_{r}(\overline{an_{2}};p)\Kl_{r}(-\overline{am_{1}};p)\Kl_{r}(-\overline{am_{2}};p)\times\\&\times e_{p}(b(n_{1}+n_{2}-m_{1}-m_{2})).
\end{split}
\]
By orthogonality of the additive characters, we get
\[
M_{4}=\frac{1}{p^{3}}\sum_{a\in\Fs}\sum_{\substack{n_{1},n_{2},m_{1},m_{2}\in I\\ n_{1}+n_{2}=m_{1}+m_{2}}}\Kl_{r}(\overline{an_{1}};p)\Kl_{r}(\overline{an_{2}};p)\Kl_{r}(-\overline{am_{1}};p)\Kl_{r}(-\overline{am_{2}};p).
\]
On the other hand, the sum 
\begin{equation}
\sum_{a\in\mathbb{F}_{p}^{\times}}\Kl_{r}(\overline{an_{1}};p)\Kl_{r}(\overline{an_{2}};p)\Kl_{r}(-\overline{am_{1}};p)\Kl_{r}(-\overline{am_{2}};p)
\label{eq : proklo}
\end{equation}
is of size $p$ if and only if either ($n_{1}=-n_{2}$ and $m_{1}=-m_{2}$) or ($n_{1}=m_{1}$ and $n_{2}=m_{2}$) or ($n_{1}=m_{2}$ and $n_{2}=m_{1}$), and it has size $O_{r}(\sqrt{p})$ otherwise (\cite[Corollary $3.3$]{FKM15b}). Choose $n_{1},n_{2}\in I$. We need to distinguish between two cases:
\begin{enumerate}
\item $n_{1}\neq -n_{2}$, thus we have at most two choices of $m_{1},m_{2}$ such that the sum in $(\ref{eq : proklo})$ has size $p$,
\item $n_{1}=-n_{2}$, then the sum in $(\ref{eq : proklo})$ has size $p$ for at most $|I|$ couples $(m_{1},m_{2})$.
\end{enumerate}
Thus we obtain
\[
M_{4}\ll_{r} |I|^{2}p^{-2}+|I|^{3}p^{-5/2}.
\]
Choosing $|I|\leq p^{1/2+1/8}$ we get the result.
\end{itemize}

\end{document}